\newtheorem{thm}{Th\'eor\`eme}[section]
\newtheorem{prop}[thm]{Proposition}
\newtheorem{lemma}[thm]{Lemme}
\theoremstyle{definition}
\theoremstyle{remark}
\newtheorem{rmk}[thm]{Remarque}
\numberwithin{equation}{section}
\newcommand{\cont}{{\rm cont}}
\newcommand{\cd}{{\rm cd}}
\newcommand{\Br}{{\rm Br}}
\newcommand{\Nred}{{\rm Nred}}
\newcommand{\Pic}{{\rm Pic}}
\newcommand{\Q}{\mathbb Q}
\newcommand{\R}{\mathbb R}
\renewcommand{\H}{\mathbb H}
\newcommand{\C}{\mathbb C}
\newcommand{\Z}{\mathbb Z}
\newcommand{\G}{{\bf G}}
\renewcommand{\P}{\mathbb P}
\newcommand{\Spec}{\operatorname{Spec}}
\renewcommand{\O}{\mathcal O}
\newcommand{\mc}[1]{\mathcal{#1}}
\newcommand{\cl}{\overline}
\newcommand{\set}[1]{\{#1\}}
\newcommand{\on}[1]{\operatorname{#1}}
\renewcommand{\lim}{\varprojlim}
\newcommand{\et}{\operatorname{\acute{e}t}}
\newcommand{\oi}{\hskip1mm {\buildrel \simeq \over \rightarrow} \hskip1mm}
\title[Sur l'injectivit\'e de l'application cycle]{Sur l'injectivit\'e de l'application cycle de Jannsen}
\author{Jean-Louis Colliot-Th\'el\`ene et Federico Scavia}  
\address{Universit\'e Paris-Saclay, CNRS, Laboratoire de math\'ematiques d'Orsay, 91405, Orsay, France.}
\email{jean-louis.colliot-thelene@universite-paris-saclay.fr}
\address{Department of Mathematics, University of California, Los Angeles, CA 90095-1555, USA}
\email{scavia@math.ucla.edu}
\date{soumis 7 f\'evrier 2023; corrections, 8 avril 2023}
\subjclass[2020]{14C25; 14C15, 14C35, 14F20, 14J20}
\begin{document}
	
	\maketitle
	 
	{\it Dedicato alla memoria di Alberto Collino}

	\begin{abstract} 
		Pour certaines classes de vari\'et\'es alg\'ebriques $X$ sur un corps $k$,
		nous comparons deux  applications cycle sur la torsion du groupe
		de  Chow des cycles de codimension 2. La premi\`ere remonte \`a des travaux de S.~Bloch (1981),
		la seconde est l'application cycle de Jannsen \`a valeurs dans la cohomologie  $\ell$-adique continue.
		On obtient ainsi des conditions suffisantes pour l'injectivit\'e de 
		l'application cycle de Jannsen $CH^2(X) \to H^4_{\cont}(X, \Z_{\ell}(2))$
		sur la torsion $\ell$-primaire.
		
		Par ailleurs, pour $k$  un corps de  fonctions rationnelles en une variable 
		sur un corps de nombres totalement imaginaire,  en utilisant des exemples
		de Sansuc et du premier auteur (1983),
		on donne des exemples de surfaces $X/k$ projectives, lisses, g\'eom\'etriquement rationnelles,
		sans point rationnel, pour lesquelles l'application cycle de Jannsen pour $\ell=2$
		a un \'el\'ement de 2-torsion non nul dans son noyau. Ceci r\'epond \`a des questions soulev\'ees dans un article r\'ecent.
	\end{abstract}
	
	\selectlanguage{english}	
	
	\begin{abstract}
		For specific classes of smooth, projective varieties $X$ over a field $k$, we compare two cycle maps on the torsion subgroup $CH^2(X)_{\text{tors} }$ of the second Chow group. The first one goes back to work of S. Bloch (1981), the second one is Jannsen's cycle map into continuous $\ell$-adic cohomology, whose   injectivity properties have attracted attention in two recent papers. The comparison gives sufficient hypotheses to guarantee injectivity of Jannsen's cycle map $CH^2(X) \to H^4_{\cont}(X, \Z_{\ell}(2))$ on $\ell$-primary torsion.
		
		Using counterexamples to injectivity of the first map due to Sansuc and the first author (1983), we give examples of  smooth, projective, geometrically rational surfaces over a rational function field in one variable over a totally imaginary number field  for which Jannsen's map for $\ell=2$ is not injective on $2$-torsion.   This answers  questions raised in a recent paper.
		
	\end{abstract}
	
	\selectlanguage{french}

	\section{Introduction}
	Dans deux articles r\'ecents, 
	F. Suzuki et le second auteur du pr\'esent article \cite{SS22},  puis
	Th. Alexandrou et S. Schreieder \cite{AS22}, se sont int\'eress\'es
	\`a la question suivante. 
	
	Soient $X$ une vari\'et\'e projective lisse connexe sur un corps  $k$ de type fini sur le corps premier. Soit $CH^{i}(X)$ le groupe de Chow des cycles de codimension $i$ modulo l'\'equivalence rationnelle.	Pour $\ell$ premier distinct de la caract\'eristique de $k$, et $i \geq 1$ entier, l'application cycle $\ell$-adique
	continue de Jannsen \cite{J88}  $$CH^{i}(X) \otimes \Z_{\ell} \to H^{2i}_{\cont}(X, \Z_{\ell}(i))$$
	est-elle injective sur la torsion ?
	
	Cette question est \'equivalente \`a la question : l'application cycle
	$$\on{cl}^i\colon CH^{i}(X) \to H^{2i}_{\cont}(X, \Z_{\ell}(i))$$ est-elle injective sur
	la torsion $\ell$-primaire  $CH^{i}(X)\{\ell\}$  de $CH^{i}(X)$ ?

	Nous renvoyons aux introductions de  \cite{AS22} et \cite{SS22} pour l'historique du sujet.
	
	On ne saurait esp\'erer une r\'eponse affirmative sur un corps quelconque.
	Pour $k$ alg\'ebriquement clos, la r\'eponse est  d\'ej\`a n\'egative,
	pour $i=1$ et $X$ une courbe elliptique. 
	
	Dans \cite{SS22}, sur des corps de type fini sur le corps premier,  F. Suzuki et le second auteur donnent des exemples o\`u l'application $\on{cl}^i$ n'est pas injective dans les cas suivants.
	
	(a) \cite[Thm. 1.3]{SS22}  $X$ 
	quotient,  par l'action d'un $2$-groupe fini,
	d'une intersection compl\`ete de grande dimension sur 
	un corps fini ou un corps de nombres,  $i=3$ et $\ell=2$. Les cycles utilis\'es  proviennent de cycles sur une cl\^{o}ture alg\'ebrique
	d\'etect\'es par une m\'ethode topologique.
	
	(b)  \cite[Thm. 1.4]{SS22}
	$X$ de dimension 4 sur un corps de nombres et $i=3$ et $\ell=2$.
	La vari\'et\'e est produit d'un solide de Kummer et d'une courbe elliptique.
	Les cycles utilis\'es sont d\'etect\'es sur une cl\^{o}ture alg\'ebrique
	par une application cycle $\lambda^{3}_{X}$ de Bloch \cite{Blo79}.

	(c) \cite[Thm. 1.5]{SS22} $X$  quadrique de dimension 3 sur  $k(t)$ corps de fonctions rationnelles d'une variable 
	sur un corps de nombres totalement imaginaire,  $i=2$ et  $\ell=2$.
	Ici $X$ n'a pas de point rationnel.  La d\'emonstration de la non trivialit\'e du cycle dans $CH^2(X)[2]$  utilise les r\'esultats de 
	Karpenko \cite{karpenko1991algebro} sur les quadriques de dimension 3 voisines de Pfister.
	Il y a des analogues avec  $i=2$ et $\ell$ premier, avec $X$ de dimension $\ell^2-1$  
	(les vari\'et\'es ``normes'' de Rost) \cite[Thm. 6.3]{SS22}.
	La d\'emonstration de la non trivialit\'e des classes  dans le groupe de Chow  $CH^{2}(X)\{\ell\}$
	utilise les r\'esultats de  Karpenko et Merkurjev \cite{KM13} sur le motif de Rost. La d\'emonstration de la  trivialit\'e de l'image
	par l'application cycle de Jannsen utilise \cite{KM13}  et
	un cas particulier de la proposition \ref{cdetcdcont} ci-dessous.
	Les cycles utilis\'es ont une classe triviale sur une cl\^{o}ture alg\'ebrique.

	Dans \cite{AS22},  Th. Alexandrou et S. Schreieder d\'efinissent, pour toute vari\'et\'e $X$ sur un corps quelconque et tout entier $i$,
	une application cycle $\lambda^{i}_{X}$ sur le groupe   $CH^{i}(X)\{\ell\}$   qui \'etend l'application cycle de  Bloch \cite{Blo79} pour les vari\'et\'es projectives et lisses sur un corps alg\'ebriquement clos.
	Pour une vari\'et\'e lisse sur un corps quelconque et tout entier $i$, l'existence d'applications   $\lambda^{i}_{X}$ \'etendant celle de Bloch \cite{Blo79}  r\'esulte d\'ej\`a de la m\'ethode de Bloch \cite[Cor. 1 p.  772]{CTSS83}   \cite[\S 3.2, diagramme (3.5)]{CT93}. 
	Pour les vari\'et\'es projectives et lisses sur un corps fini,  pour tout entier $i$, une telle application est
	\'etudi\'ee  dans \cite[Cor. 3 (ii) p. 773; Thm. 2, p. 780]{CTSS83}.
	Dans ce cas, l'application  $\lambda^2_{X}$  est injective  et n'est pas plus fine que l'application cycle de Jannsen  \cite[Thm. 4, p. 787]{CTSS83}. La comparaison avec l'application cycle $\ell$-adique
	utilise la commutativit\'e \cite[Prop. 1 p. 766]{CTSS83} \'etablie pour tout $i$ et tout corps.
	
	Entre autres r\'esultats, Th. Alexandrou et S. Schreieder  \'etablissent  sur tout corps l'injectivit\'e de leur 
	application cycle $\lambda^{i}_{X}$ pour $i=1,2$.   
	Pour tout  $d \geq 3$, tout $i$ avec $3 \leq i \leq d$ et tout $\ell$ premier,
	ils donnent des exemples  de vari\'et\'es projectives et lisses de dimension $d$
	sur des corps $k$ de type fini convenable pour lesquelles leur application $\lambda^{i}_{X}$
	n'est pas injective.
	Ils montrent que leur application cycle  $\lambda^{i}_{X}$ raffine, en g\'en\'eral strictement, l'application cycle de Jannsen.
	En corollaire \cite[Cor. 1.4]{AS22}, avec les m\^{e}mes hypoth\`eses sur $i$,  $d$  et $k$ que ci-dessus,  ils obtiennent
	des exemples de non injectivit\'e de l'application
	cycle de Jannsen sur les cycles de torsion.  Pour tout $d\geq 3$, ceci r\'epond \`a la question 
	1.7 (a) de \cite{SS22} sur le cas $i=d$.
	
	Les exemples de \cite[Thm. 1.2]{AS22}  n'ont pas de point rationnel.
	En partant d'un r\'esultat  de Parimala et Suresh (1995) d'application cycle non injective
	pour une surface sur un corps $p$-adique, les auteurs donnent un exemple  \cite[Thm. 1.5]{AS22} de vari\'et\'e projective et lisse $X$ de dimension 3, sur un corps $k$ de type fini sur le corps premier, avec un point rationnel,
	pour laquelle l'application $\lambda^3_{X}$ n'est pas injective (ici $\ell=2$). Il en est donc ainsi aussi  de l'application cycle de Jannsen.

	\medskip

	Ces r\'esultats  laissaient  ouvertes
	les questions d'injectivit\'e
	des  applications cycle de Jannsen  sur  $CH^{i}(X)\{\ell\}$ pour
	les  vari\'et\'es $X$ de g\'eom\'etrie simple, comme  les vari\'et\'es rationnellement 
	connexes,  et pour les surfaces, lorsque le  corps de base
	est de type fini  de petit degr\'e de transcendance sur le corps premier.

	\medskip

	Dans cet article, nous donnons des exemples de non injectivit\'e
	de l'application cycle de Jannsen
	avec $X$ des {\it surfaces g\'eom\'etriquement rationnelles}
	sans point rationnel sur des corps $k$  de type fini
	de degr\'e de transcendance 1 sur un corps de nombres totalement 
	imaginaire, avec $\ell=2$ (Th\'eor\`eme \ref{exemplevoulu}).
	En prenant le produit avec 
	l'espace projectif $\P^{d-2}_{k}$,
	ceci donne de tels exemples  avec $i=2$ et avec $i=d$ pour tout $d \geq 2$, et avec
	$X$ des vari\'et\'es g\'eom\'etriquement rationnelles de dimension $d$.	Les cycles que nous d\'etectons ont une classe triviale sur une cl\^{o}ture alg\'ebrique du corps de base.
	
	\medskip
	
	Voici le contenu de l'article.  
	
	Au \S  \ref{galoisetcontinu} on donne quelques lemmes sur
	la cohomologie continue et la cohomologie galoisienne.
	
	Au \S  \ref{hypotheses},  nous introduisons une classe (\ref{h4})  de vari\'et\'es  projectives et lisses   qui comprend en particulier les surfaces projectives et lisses g\'eom\'etriquement rationnelles. Nous \'etablissons des r\'esultats g\'en\'eraux sur la cohomologie \'etale continue de ces vari\'et\'es.
	
	Au \S  \ref{codim2BCTR}, pour les vari\'et\'es $X$ dans la classe (\ref{h4}) 	nous donnons
	des rappels sur une application cycle d'origine $K$-th\'eorique  sur le groupe de torsion  $CH^2(X)_{\text{tors}}$ du groupe de Chow 
	des cycles de codimension 2
	\'etudi\'ee dans des travaux de S. Bloch, J.-J. Sansuc, le premier auteur,  
	W. Raskind,  B. Kahn.	
	Nous consid\'erons ensuite une application cycle
	de Jannsen  secondaire sur $CH^2(X)_{\text{tors}}$,  
	\`a valeurs dans la cohomologie  \'etale continue de Jannsen.
	
	Une partie essentielle et d\'elicate de l'article consiste \`a \'etablir que les noyaux
	de ces deux applications cycle sont isomorphes. C'est le th\'eor\`eme \ref{compatible-body},
	\'etabli dans l'appendice (Th\'eor\`eme \ref{compatible}). Sous certaines hypoth\`eses
	suppl\'ementaires portant sur l'existence d'un point rationnel ou sur la dimension
	cohomologique du corps de base,
	les travaux mentionn\'es plus haut avaient  \'etabli l'injectivit\'e de l'application cycle d'origine $K$-th\'eorique.
	On en d\'eduit dans ces cas que l'application cycle de Jannsen est injective.
	
	Au \S \ref{fibconiques}  nous rappelons les exemples de non injectivit\'e de l'application cycle d'origine $K$-th\'eorique obtenus par Sansuc et le premier auteur en 1983. Le th\'eor\`eme  \ref{compatible-body} nous permet
	d'en d\'eduire des exemples de non injectivit\'e pour l'application secondaire de Jannsen. Dans notre contexte,
	de tels exemples ne peuvent \^etre construits que sur des corps de dimension cohomologique exactement 3.
	
	Comme indiqu\'e ci-dessus, l'appendice est consacr\'e \`a la comparaison des noyaux des deux  applications cycle sur les vari\'et\'es de la classe  (\ref{h4}). On y utilise des travaux de Lichtenbaum et de Bruno Kahn.

	\medskip

	Les questions suivantes restent ouvertes.
	Sur $k$ un corps de nombres, l'application cycle $CH^{i}(X) \{\ell\} \to H^{2i}_{\cont}(X,\Z_{\ell}(i))$
	est-elle injective dans les cas suivants :
	
	(a) $X$ est une surface et $i=2$.
	
	(b) $X$ est un solide et $i=2$.
	
	(c) $X$ est un solide et $i=3$.

	\bigskip
	
	Fixons quelques notations.
	
	Soient $k$ un corps,
	$k_{s}$ une cl\^{o}ture s\'eparable de $k$ et $G$ le groupe de Galois absolu de $k$.
	\'Etant donn\'e un module galoisien $M$, on note indiff\'eremment $H^{i}(G,M)$
	ou $H^{i}(k,M)$ les groupes de cohomologie galoisienne.
	
	Une $k$-vari\'et\'e est un $k$-sch\'ema s\'epar\'e de type fini.  
	Pour $X$ une $k$-vari\'et\'e, on note $X^s= X \times_{k}k_{s}$.
	Soit $\ell$ un premier inversible dans $k$.
	Sur toute $k$-vari\'et\'e $X$, pour tout entier $n>0$ premier \`a $p$, on a la suite de Kummer pour
	la topologie \'etale
	\begin{equation}\label{kummer}
		\begin{tikzcd}
			1 \arrow[r] & \mu_{\ell^n} \arrow[r] & \G_{m} \arrow[r,"x \mapsto x^{\ell^n}"] & \arrow[r] \G_{m} & 1. 	
		\end{tikzcd}
	\end{equation}
	Pour tout $j \in \Z$, on a des suites exactes
	de modules galoisiens finis 
	\begin{equation}\label{mun-mum}
		\begin{tikzcd}
			1 \arrow[r] & \mu_{\ell^{n}}^{\otimes j} \arrow[r] & \mu_{\ell^{m+n}}^{\otimes j} \arrow[r,"x \mapsto x^{\ell^n}"] & \mu_{\ell^{m}}^{\otimes j}  \arrow[r] \ & 1. 	
		\end{tikzcd}
	\end{equation}
	Ces suites induisent des suites exactes de faisceaux pour la topologie \'etale sur
	toute $k$-vari\'et\'e $X$, compatibles avec les suites de Kummer, 
	en ce sens qu'on a un diagramme commutatif
	\[
	\begin{tikzcd}
		1 \arrow[r] &  \mu_{\ell^{n+1}} \arrow[r] \arrow[d,"x\mapsto x^\ell"] &  \G_{m} \arrow[r]\arrow[d,"x\mapsto x^\ell"] &  \G_{m} \arrow[r]\arrow[d,equal] &  1 \\
		1 \arrow[r] & \mu_{\ell^n}  \arrow[r] &  \G_{m} \arrow[r] &  \G_{m} \arrow[r] &  1.
	\end{tikzcd}
	\]
	
	Pour un groupe ab\'elien $A$ et un nombre premier $\ell$, on note $A[\ell]$ le
	sous-groupe de $\ell$-torsion et $A\{\ell\}$ le sous-groupe de torsion $\ell$-primaire.
	
	Soient $k$ un corps et $X$ une $k$-vari\'et\'e lisse connexe de dimension $d$.
	On note $CH^{i}(X)$ le groupe de Chow des cycles
	de codimension $i$ modulo l'\'equivalence rationnelle. On note
	$CH_{0}(X)$ le groupe de Chow des z\'ero-cycles de dimension z\'ero.
	Si $X$ est projective, on dispose de l'application degr\'e $CH_{0}(X) \to \Z$
	envoyant un point ferm\'e $P \in X$ de corps r\'esiduel $k(P)$ sur le
	degr\'e $[k(P):k]$. On note alors $A_{0}(X) \subset CH_{0}(X)$ le groupe
	des classes de z\'ero-cycles de degr\'e z\'ero.

	\section{Modules galoisiens de type fini et  cohomologie continue}\label{galoisetcontinu}
	Soient $k$ un  corps et $\ell$ un nombre premier inversible dans $k$.
	Soit $M$ un $G$-module $\Z$-libre de type fini. Soit $T$ le $k$-tore de groupe des cocaract\`eres $M$,
	c'est-\`a-dire que l'on a $M\otimes k_{s}^* \oi T(k_{s}).$
	On a les suites exactes d\'eduites de la suite (\ref{kummer}):
	$$ 0 \to M \otimes \mu_{\ell^n} \to M\otimes k_{s}^* \to  M \otimes  k_{s}^* \to 1$$
	soit encore
	\begin{equation}\label{(*)}
		1 \to T(k_{s})[\ell^n] \to T(k_{s}) \to T(k_{s}) \to 1
	\end{equation}
	qui par cohomologie galoisienne donnent les suites exactes
	avec les fl\`eches qu'on imagine en passant de $\ell^m$ \`a $\ell^n$ avec $m \geq n$.
	Plus pr\'ecis\'ement on obtient une surjection
	$T(k)/\ell^m \to T(k)/\ell^n$ induite par l'identit\'e sur $T(k)$ 
	et une application
	$H^1(k,T)[\ell^m]  \to H^1(k,T)[\ell^n] $ induite par la multiplication par $\ell^{m-n}$.
	Comme $H^1(k,T)$ est d'exposant fini (annul\'e par le degr\'e de l'extension finie
	galoisienne d\'eployant   $T$ ou son groupe de cocaract\`eres),
	cette multiplication 
	est {\it nulle}  si $m$ est assez grand. 
	Ainsi pour $n$ fix\'e et $m$ assez grand, l'image de
	$$H^1(k,  M \otimes \mu_{\ell^{n+m}}) \to H^1(k,  M \otimes \mu_{\ell^n})$$
	co\"{\i}ncide avec l'image de $T(k) \to T(k)/\ell^n  \to H^1(k,  M \otimes \mu_{\ell^n})$.
	
	Par ailleurs, pour $n$ assez grand, $ H^1(k, T)/\ell^n = H^1(k,T)\{\ell\}$
	puisque ce dernier groupe est d'exposant fini. On en d\'eduit que
	les applications
	injectives  $$ H^1(k, T)/\ell^n  \to  H^2(k,T(k_{s})[\ell^n])  $$
	d\'eduites de la suite (\ref{(*)}) par cohomologie galoisienne induisent
	des inclusions compatibles
	$$ H^1(k,T)\{\ell\} \hookrightarrow  H^2(k,T(k_{s})[\ell^n]).$$	
	Rappelons qu'un syst\`eme projectif de groupes ab\'eliens $N_{n}$ satisfait la condition de Mittag-Leffler 
	si pour $n\geq 1$ il existe un entier $m(n) \geq n$    tel que tout $m\geq m(n)$,
	l'image de $N_{m} \to N_{n}$ co\"{\i}ncide avec celle de $N_{m(n)} \to N_{n}$.
	Si un syst\`eme projectif $(N_{n})$ de groupes ab\'eliens satisfait la condition de Mittag-Leffler,
	alors ${\lim}^1 N_{n} =0$.

	\begin{prop}
		\label{NSW}\cite[Thm. (2.7.5)]{NSW} 
		Soit $M$ un $\Z_{\ell}$-module de type fini sans torsion \'equip\'e d'une
		action continue de $G$. 
		Pour $n$ entier, $n \geq 1$, soit $M_{n}= M/\ell^n$. On a $M = \lim \ M_{n}$.
		Pour tout $i\geq 1$, on a la suite exacte
		$$ 0 \to { \lim}^1 H^{i-1}(k, M_{n}) 
		\to H^i_{\cont}(k, M )  \to \lim \  H^i(k, M_{n}) \to 0.$$
		Si le syst\`eme $H^{i-1}(k, M_{n})$ satisfait la condition de Mittag-Leffler,
		alors on a
		$$ H^i_{\cont}(k, M ) \oi \lim \  H^i(k, M_{n}).$$
	\end{prop}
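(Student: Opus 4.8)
La stratégie consiste à calculer la cohomologie continue au moyen du complexe des cochaînes continues. Pour $j\geq 0$, notons $C^j_{\cont}(G,M)$ le groupe des applications continues $G^j \to M$, muni de la différentielle habituelle; par définition, $H^i_{\cont}(k,M)$ est le $i$-ème groupe de cohomologie de ce complexe. Le module $M$ étant de type fini et sans torsion sur $\Z_\ell$, il est isomorphe à $\Z_\ell^r$, donc compact, et $M = \lim M_n$ avec chaque $M_n$ fini et discret, muni de la topologie limite projective. Comme $G^j$ est compact, la première étape est d'établir l'isomorphisme de complexes
\[
C^\bullet_{\cont}(G,M) \cong \lim C^\bullet_{\cont}(G,M_n),
\]
une application continue d'un espace compact vers $\lim M_n$ équivalant à la donnée d'un système compatible d'applications continues à valeurs dans les $M_n$.

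Je montrerais ensuite que, pour chaque $j$, les applications de transition $C^j_{\cont}(G,M_{n+1}) \to C^j_{\cont}(G,M_n)$ sont surjectives. En effet, une cochaîne continue $f\colon G^j \to M_n$ prend ses valeurs dans un ensemble fini et se factorise, par compacité de $G^j$, à travers un quotient fini de $G^j$; la surjection $M_{n+1} \to M_n$ permet alors de relever $f$ point par point sur son image, fournissant un relèvement continu. Il en résulte que ${\lim}^1 C^j_{\cont}(G,M_n) = 0$ pour tout $j$, de sorte que la suite
\[
0 \to \lim C^\bullet_{\cont}(G,M_n) \to \prod_n C^\bullet_{\cont}(G,M_n) \xrightarrow{1-s} \prod_n C^\bullet_{\cont}(G,M_n) \to 0,
\]
où $s$ désigne l'application de décalage déduite des transitions, est une suite exacte courte de complexes.

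La suite exacte longue de cohomologie associée, jointe à l'identité $H^i(\prod_n C^\bullet_{\cont}(G,M_n)) = \prod_n H^i(k,M_n)$ (la cohomologie commute aux produits) et à l'identification, pour un module fini, de la cohomologie du complexe $C^\bullet_{\cont}(G,M_n)$ avec la cohomologie galoisienne usuelle $H^i(k,M_n)$, fournit la suite voulue: le noyau de $1-s$ sur $\prod_n H^i(k,M_n)$ est $\lim H^i(k,M_n)$, tandis que le conoyau de $1-s$ sur $\prod_n H^{i-1}(k,M_n)$ est ${\lim}^1 H^{i-1}(k,M_n)$. La dernière assertion en découle aussitôt: si le système $(H^{i-1}(k,M_n))_n$ satisfait la condition de Mittag-Leffler, alors ${\lim}^1 H^{i-1}(k,M_n)=0$, et la flèche $H^i_{\cont}(k,M) \to \lim H^i(k,M_n)$ est un isomorphisme.

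Le point délicat est l'étape topologique initiale — l'isomorphisme $C^\bullet_{\cont}(G,M) \cong \lim C^\bullet_{\cont}(G,M_n)$ et la surjectivité des transitions — qui repose de façon essentielle sur la compacité de $G^j$ et de $M$, et sur le fait que toute application continue d'un espace compact vers un ensemble fini discret se factorise à travers un quotient fini. Ce point acquis, le reste n'est qu'une instance du formalisme de Milnor du ${\lim}^1$ pour un système projectif de complexes à applications de transition surjectives.
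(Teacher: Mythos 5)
Your proposal is correct, and it coincides with the proof behind the paper's own treatment: the paper does not prove this proposition but cites \cite[Thm. (2.7.5)]{NSW}, whose argument is exactly yours — identify $H^i_{\cont}(k,M)$ with the cohomology of the complex of continuous cochains, check that $C^\bullet_{\cont}(G,M)\cong \lim C^\bullet_{\cont}(G,M_n)$ with surjective cochain-level transition maps, and apply the Milnor ${\lim}^1$ sequence. There is nothing to add or correct.
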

	
	Des calculs ci-dessus, on d\'eduit :
	
	\begin{prop}\label{enoncereseaux}
		Soit $M$ un $G$-r\'eseau, groupe des cocaract\`eres   
		d'un $k$-tore $T$.
		
		(a) Le syst\`eme  projectif $H^1(k,  M \otimes \mu_{\ell^n})$
		satisfait la condition de Mittag-Leffler, et on a  ${\lim}^1 \ H^1(k,  M \otimes \mu_{\ell^n})=0$.
		
		(b) La fl\`eche naturelle $H^2_{\cont}(k, M \otimes \Z_{\ell}) \to \lim \ H^2(k, M/\ell^n) $
		est un isomorphisme.
		
		(c) Pour $n$ assez grand, on a des inclusions compatibles 
		$$ H^1(k,T)\{\ell\} \hookrightarrow  H^2(k,T(k_{s})[\ell^n]).$$
	\end{prop}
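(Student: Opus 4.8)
The plan is to deduce all three parts from the explicit description of the groups $H^1(k, M \otimes \mu_{\ell^n})$ already assembled in the discussion preceding the statement; the only external inputs needed are Proposition~\ref{NSW} and the fact that $H^1(k,T)$ (and, for part (b), $H^1(k,M)$) has finite exponent, killed by the degree of a finite Galois splitting field of $T$, as obtained by restriction--corestriction.

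First I would record the short exact sequence coming from the long exact cohomology sequence of (\ref{(*)}): the map $T(k)\xrightarrow{\ell^n}T(k)$ has cokernel $T(k)/\ell^n$, which injects into $H^1(k,M\otimes\mu_{\ell^n})$ with quotient $\ker(H^1(k,T)\xrightarrow{\ell^n}H^1(k,T))=H^1(k,T)[\ell^n]$, giving
$$0 \to T(k)/\ell^n \to H^1(k, M\otimes\mu_{\ell^n}) \to H^1(k,T)[\ell^n] \to 0.$$
Then I would identify the transition maps from level $n+m$ to level $n$: on the subgroup $T(k)/\ell^{n+m}$ they are the canonical surjection onto $T(k)/\ell^n$, while on the quotient $H^1(k,T)[\ell^{n+m}]$ they are multiplication by $\ell^m$. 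Since $H^1(k,T)$ has finite exponent, this multiplication vanishes once $m$ is large, so the image of $H^1(k, M\otimes\mu_{\ell^{n+m}}) \to H^1(k,M\otimes\mu_{\ell^n})$ lands in, hence equals, $T(k)/\ell^n$ for all large $m$. This stabilization of images is exactly the Mittag-Leffler condition, and the general fact recalled above then yields ${\varprojlim}^1 H^1(k,M\otimes\mu_{\ell^n})=0$, proving (a).

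For (b) I would apply Proposition~\ref{NSW} to the finitely generated torsion-free $\Z_{\ell}$-module $M\otimes\Z_{\ell}$, whose reductions are $M_n = M/\ell^n$; for $i=2$ it produces a short exact sequence whose left-hand term is ${\varprojlim}^1 H^1(k, M/\ell^n)$ and whose right-hand surjection is precisely the natural map of the statement. It thus suffices to check that this ${\varprojlim}^1$ vanishes, and this follows from the identical argument, now run with the untwisted sequence $0 \to M \xrightarrow{\ell^n} M \to M/\ell^n \to 0$: one obtains $0 \to H^0(k,M)/\ell^n \to H^1(k,M/\ell^n)\to H^1(k,M)[\ell^n]\to 0$, the transition maps are again surjective on the $H^0$-part and multiplication by $\ell^m$ on the $H^1$-part, and $H^1(k,M)$ has finite exponent, so Mittag-Leffler holds and ${\varprojlim}^1 H^1(k,M/\ell^n)=0$.

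Finally, for (c) I would use the next segment of the long exact sequence of (\ref{(*)}), namely the connecting maps realizing $H^1(k,T)/\ell^n \hookrightarrow H^2(k,T(k_s)[\ell^n])$ compatibly in $n$, together with the observation that $H^1(k,T)$ has finite exponent. Writing that exponent as $\ell^{a}e'$ with $e'$ prime to $\ell$, the prime-to-$\ell$ part dies in $H^1(k,T)/\ell^n$ and the $\ell$-primary part is unaffected once $n\ge a$, so the canonical map $H^1(k,T)\{\ell\} \to H^1(k,T)/\ell^n$ is an isomorphism for $n$ large; composing gives the desired compatible inclusions $H^1(k,T)\{\ell\}\hookrightarrow H^2(k,T(k_s)[\ell^n])$. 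I do not expect a serious obstacle: the whole proposition is bookkeeping around the two short exact sequences, and the single genuinely needed input---finiteness of the exponent of $H^1(k,T)$ and of $H^1(k,M)$---has effectively already been isolated, so the only care required is tracking the transition maps and the distinction between the twisted module $M\otimes\mu_{\ell^n}$ in (a), (c) and the untwisted $M/\ell^n$ in (b).
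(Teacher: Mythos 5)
Your parts (a) and (c) are correct and coincide with the paper's own argument: the transition maps are the canonical surjections on $T(k)/\ell^n$ and multiplication by $\ell^{m}$ on $H^1(k,T)[\ell^{n+m}]$, and the finite exponent of $H^1(k,T)$ does the rest.

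Part (b), however, contains a genuine gap. Your exact sequence is wrong: in $0\to M\xrightarrow{\ell^n}M\to M/\ell^n\to 0$ the finite module is the \emph{quotient}, so the long exact sequence yields
\[
0\to H^1(k,M)/\ell^n\to H^1(k,M/\ell^n)\to H^2(k,M)[\ell^n]\to 0,
\]
whereas the sequence you wrote, with $H^0(k,M)/\ell^n$ and $H^1(k,M)[\ell^n]$ as outer terms, is the one computing $H^0(k,M/\ell^n)$ --- a degree shift. This is not repairable: with the correct sequence, the transition maps on the quotient part are multiplication by $\ell^m$ on $H^2(k,M)[\ell^{n+m}]$, and $H^2(k,M)$ has no reason to be of finite exponent (for $M=\Z$ with trivial action, $H^2(k,\Z)\simeq\operatorname{Hom}_{\cont}(G,\Q/\Z)$). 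In fact the untwisted statement you are trying to prove is \emph{false} in general: for $k=\Q$ and $M=\Z$, Kronecker--Weber gives $H^1(\Q,\Z/\ell^n)=\operatorname{Hom}_{\cont}(G_{\Q},\Z/\ell^n)\simeq\bigoplus_p\operatorname{Hom}(\Z_p^\times,\Z/\ell^n)$; the summand at a prime $p$ with $v_\ell(p-1)=j\geq 1$ is killed by the transition map from level $n+m$ as soon as $m\geq j$, and such primes exist for every $j$ by Dirichlet, so Mittag-Leffler fails; these groups being countable, this forces ${\lim}^1\, H^1(\Q,\Z/\ell^n)\neq 0$, i.e. $H^2_{\cont}(\Q,\Z_\ell)\to\lim\, H^2(\Q,\Z/\ell^n)$ is not injective.

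The way out --- and what the paper does, as its use of (b) in the proof of Proposition \ref{MLH2}(a) confirms --- is that in (b) the $\Z_\ell$-module must be understood with the Tate twist: it is $M\otimes\Z_\ell(1)=\lim\,(M\otimes\mu_{\ell^n})$, whose reductions mod $\ell^n$ are the modules $M\otimes\mu_{\ell^n}$ of part (a), not $M\otimes\Z/\ell^n$. With this reading, (b) is immediate: apply Proposition \ref{NSW} with $i=2$ to $M\otimes\Z_\ell(1)$ and kill the ${\lim}^1$ term by part (a). No separate untwisted argument is needed, and none is possible: the whole mechanism rests on Kummer theory realizing $M\otimes\mu_{\ell^n}$ as the $\ell^n$-torsion of the divisible module $T(k_s)$ (which shifts the relevant boundary terms down one degree) together with Hilbert 90, which is what makes $H^1(k,T)$ of finite exponent.
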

	
	Notons ici une cons\'equence de la proposition \ref{NSW}.
	
	\begin{prop}\label{cdetcdcont}    Supposons $\cd(k) \leq N$. Pour tout nombre premier $\ell$ et tout entier $j\in \Z$, on a $H^{N+1}_{\cont}(k, \Z_{\ell}(j))=0$.
	\end{prop}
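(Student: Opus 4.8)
The plan is to apply Proposition \ref{NSW} to the $G$-module $M = \Z_{\ell}(j) = \lim_n \mu_{\ell^n}^{\otimes j}$, for which $M_n = M/\ell^n = \mu_{\ell^n}^{\otimes j}$, and to deduce the vanishing of $H^{N+1}_{\cont}(k, \Z_{\ell}(j))$ by showing that both outer terms of the exact sequence
$$0 \to {\lim}^1 H^{N}(k, \mu_{\ell^n}^{\otimes j}) \to H^{N+1}_{\cont}(k, \Z_{\ell}(j)) \to \lim \ H^{N+1}(k, \mu_{\ell^n}^{\otimes j}) \to 0$$
are zero. The right-hand term is the easy one: each $\mu_{\ell^n}^{\otimes j}$ is a finite Galois module, so the hypothesis $\cd(k) \leq N$ gives $H^{N+1}(k, \mu_{\ell^n}^{\otimes j}) = 0$ for every $n$, and hence the inverse limit on the right vanishes.

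The real content is the vanishing of the ${\lim}^1$ term, which I would obtain by analysing the transition maps of the inverse system $H^{N}(k, \mu_{\ell^n}^{\otimes j})$. Under the identification $M/\ell^n = \mu_{\ell^n}^{\otimes j}$, the transition map from level $m+n$ down to level $m$ is precisely the surjection $x \mapsto x^{\ell^n}$ that appears in the short exact sequence \eqref{mun-mum},
$$1 \to \mu_{\ell^{n}}^{\otimes j} \to \mu_{\ell^{m+n}}^{\otimes j} \xrightarrow{x \mapsto x^{\ell^n}} \mu_{\ell^{m}}^{\otimes j} \to 1.$$
Passing to Galois cohomology, the induced map $H^{N}(k, \mu_{\ell^{m+n}}^{\otimes j}) \to H^{N}(k, \mu_{\ell^{m}}^{\otimes j})$ sits in a long exact sequence whose next term is $H^{N+1}(k, \mu_{\ell^{n}}^{\otimes j})$, and this group is zero by $\cd(k) \leq N$. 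It follows that every transition map of the system is \emph{surjective}.

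A projective system of abelian groups with surjective transition maps trivially satisfies the Mittag-Leffler condition (the image of $H^{N}(k, \mu_{\ell^m}^{\otimes j}) \to H^{N}(k, \mu_{\ell^n}^{\otimes j})$ equals the whole target for every $m \geq n$), so by the reminder preceding Proposition \ref{NSW} we get ${\lim}^1 H^{N}(k, \mu_{\ell^n}^{\otimes j}) = 0$. With both outer terms of the displayed exact sequence equal to zero, we conclude $H^{N+1}_{\cont}(k, \Z_{\ell}(j)) = 0$.

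I expect no serious obstacle here: the argument is formal once Proposition \ref{NSW} is available. The one step I would check carefully is the identification of the inverse-system transition maps with the power maps of \eqref{mun-mum}, so that the connecting homomorphism of the long exact cohomology sequence lands the obstruction to surjectivity exactly in the degree-$(N+1)$ group annihilated by the cohomological dimension hypothesis; after that, surjectivity of the transition maps and the Mittag-Leffler vanishing are immediate.
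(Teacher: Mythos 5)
Your proof is correct and follows essentially the same route as the paper: apply Proposition \ref{NSW}, kill the right-hand term $\lim H^{N+1}(k,\mu_{\ell^n}^{\otimes j})$ directly by $\cd(k)\leq N$, and kill the ${\lim}^1$ term by using the short exact sequences \eqref{mun-mum} to show the transition maps $H^N(k,\mu_{\ell^{m+n}}^{\otimes j})\to H^N(k,\mu_{\ell^n}^{\otimes j})$ are surjective (the obstruction lying in a degree-$(N+1)$ group that vanishes), whence Mittag-Leffler. Your extra care in identifying the transition maps of the system $M/\ell^n$ with the power maps of \eqref{mun-mum} is exactly the point the paper leaves implicit.
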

	\begin{proof} D'apr\`es la proposition \ref{NSW},
		on a les suites exactes
		$$ 0 \to {\lim}^1 H^{N}(k,\mu_{\ell^n}^{\otimes j}) \to H^{N+1}_{\cont}(k, \Z_{\ell}(j)) \to \lim 
		\ H^{N+1}(k, \mu_{\ell^n}^{\otimes j}) \to 0.$$ 
		On a les suites exactes courtes (\ref{mun-mum}).
		Sous l'hypoth\`ese $\cd(k) \leq N$, toutes les applications 
		$H^N(k,  \mu_{\ell^{m+n}}^{\otimes j} )\to  H^N(k, \mu_{\ell^n}^{\otimes j})$ sont    surjectives,
		car $H^{N+1}(k,  \mu_{\ell^{m}}^{\otimes j} )=0.$
		La condition de Mittag-Leffler pour la famille $H^N(k, \mu_{\ell^n}^{\otimes j})$
		est donc satisfaite,  et donc ${\lim}^1 H^{N}(k,\mu_{\ell^n}^{\otimes j})=0$.
		Comme les groupes $H^{N+1}(k, \mu_{\ell^n}^{\otimes j})$ sont nuls, on conclut. 
	\end{proof}

	\section{Cohomologie galoisienne et cohomologie \texorpdfstring{$\ell$}{ell}-adique}\label{hypotheses}
	On suppose d\'esormais ${\rm car}(k) =0$. Dans cet article, 
	on s'int\'eresse aux $k$-vari\'et\'es projectives, lisses, g\'eom\'etriquement connexes
	satisfaisant  certaines des  hypoth\`eses suivantes.
	\begin{equation}\label{h1}\tag{H1}
		\text{$\Pic(X^s)=\on{NS}(X^s)$, ce groupe est sans torsion, et  
			$\Br(X^s)=0$. 
		} 
	\end{equation}
	Comme on a suppos\'e ${\rm car}(k)=0$, l'hypoth\`ese (\ref{h1}) est \'equivalente \`a : 
	\begin{equation}\label{h1'}\tag{H1'}
		\text{$H^{i}(X,\O_{X})=0$  pour $i=1,2$ et $H^i(X^s,\Z_{\ell} )\set{\ell}=0$ pour $i= 2, 3$ et tout premier $\ell$.} 
	\end{equation} 
	Si $X$ satisfait (\ref{h1}), alors $H^1(X^s, \Z_{\ell})=0$ et $H^1(X^s, \Z/\ell^n)=0$ pour tout $n\geq 0$ et tout premier $\ell$.
	
	Si $X$ satisfait (\ref{h1}),  $\Pic(X^s)$ est libre de type fini. On notera $S$ le $k$-tore 
	dont le groupe des cocaract\`eres est $\Pic(X^s)$, i.e.  $\Pic(X^s) \otimes k_s^*=S(k_s)$.
	
	Pour $X$ satisfaisant (\ref{h1}), pour tout entier $n>0$, la suite de Kummer pour
	la topologie \'etale (\ref{kummer}) induit des isomorphismes $G$-\'equivariants 
	$$ \Pic(X^s)\otimes \mu_{\ell^n}^{\otimes j}     \oi H^2(X^s, \mu_{\ell^n}^{\otimes j+1})$$
	pour tout entier $j \in \Z$.
	Soit $M=\Pic(X^s)$. 
	On a donc
	$M \otimes \mu_{\ell^n} \oi H^2(X^s, \mu_{\ell^n}^{\otimes 2})$
	et $\Pic(X^{s})\otimes \Z_{\ell}(1) \oi H^2(X^s, \Z_{\ell}(2))$.
	\begin{equation}\label{h2}\tag{H2}
		\text{$X$ satisfait (\ref{h1}) et $H^3(X^s, \Z_{\ell})=0$ pour tout premier $\ell$.}
	\end{equation} 
	\begin{equation}\label{h3}\tag{H3}
		\text{$X$ satisfait (\ref{h1}) et
			$H^3(X^s, \Z/\ell^n)=0$ pour tout $n\geq 0$ et tout premier $\ell$.}
	\end{equation}	
	L'hypoth\`ese (\ref{h3}) \'equivaut \`a la combinaison de (\ref{h2}) et de l'hypoth\`ese que le $\Z_\ell$-module	$H^4(X^s,\Z_{\ell})$ est sans torsion pour tout premier $\ell$.
	L'hypoth\`ese (\ref{h3}) est aussi  \'equivalente \`a l'hypoth\`ese suivante.
    \begin{equation}\label{h4}\tag{H4}
\begin{split}
	H^{i}(X,\O_{X}) = 0 \text{ pour } i=1,2, H^3(X^s, \mathbb{Q}_{\ell}) = 0 &\text{ et } H^i(X^s, \mathbb{Z}_{\ell})\set{\ell} = 0 \\ &\text{pour tout $i\leq 4$ et tout $\ell$.}
\end{split}
\end{equation}
 
	On a  donc les implications suivantes :
	\[\text{(\ref{h4})} \Longleftrightarrow\
	\text{(\ref{h3})}\Longrightarrow\text{(\ref{h2})}\Longrightarrow\text{(\ref{h1})}\Longleftrightarrow\text{(\ref{h1'})}.\]
	Pour $X$ une $k$-surface projective, lisse,
	g\'eom\'etriquement rationnelle,  toutes ces hypoth\`eses sont satisfaites. 
	
	\begin{prop} \label{MLH4} 
		Soit $X$ une $k$-vari\'et\'e satisfaisant (\ref{h3}).   Notons
		$$H^4 (X, \mu_{\ell^n}^{\otimes 2})^0= \on{Ker} [H^4 (X, \mu_{\ell^n}^{\otimes 2}) \to H^4 (X^s, \mu_{\ell^n}^{\otimes 2})].$$
		
		(a)  La suite spectrale de Hochschild-Serre pour la cohomologie \'etale donne une suite exacte
		\begin{equation} \label{S1}
			H^4 (k, \mu_{\ell^n}^{\otimes 2})  \to H^4 (X, \mu_{\ell^n}^{\otimes 2})^0 \to  
			H^2 (k, H^2(X^s, \mu_{\ell^n}^{\otimes 2})).
		\end{equation}

		(b) Si $X$ poss\`ede un point rationnel ou plus g\'en\'eralement un z\'ero-cycle de degr\'e~1,
		la fl\`eche $H^4 (k, \mu_{\ell^n}^{\otimes 2})  \to H^4 (X, \mu_{\ell^n}^{\otimes 2})^0$
		est injective.
		
		(c) Si $\cd(k) \leq 3$,
		alors la fl\`eche $H^4 (X, \mu_{\ell^n}^{\otimes 2})^0 \to  
		H^2 (k, H^2(X^s, \mu_{\ell^n}^{\otimes 2}))$ est injective.
	\end{prop}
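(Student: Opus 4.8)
The plan is to extract all three statements from the Hochschild--Serre spectral sequence for \'etale cohomology
$$E_2^{p,q} = H^p(k, H^q(X^s, \mu_{\ell^n}^{\otimes 2})) \Longrightarrow H^{p+q}(X, \mu_{\ell^n}^{\otimes 2}),$$
specialized to the antidiagonal $p+q=4$. First I would record the vanishing coming from the hypotheses. Since $X$ satisfies \eqref{h3}, and hence \eqref{h1}, one has $H^1(X^s, \mu_{\ell^n}^{\otimes 2})=0$ and $H^3(X^s, \mu_{\ell^n}^{\otimes 2})=0$ (these are twists of $H^i(X^s,\Z/\ell^n)$, which vanish for $i=1$ by \eqref{h1} and for $i=3$ by \eqref{h3}). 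Thus the rows $q=1$ and $q=3$ are identically zero, and on the antidiagonal $p+q=4$ only the three terms $E_2^{0,4}$, $E_2^{2,2}=H^2(k,H^2(X^s,\mu_{\ell^n}^{\otimes 2}))$ and $E_2^{4,0}=H^4(k,\mu_{\ell^n}^{\otimes 2})$ survive at the $E_2$-page.

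Next I would analyse the induced filtration $F^\bullet$ on $H^4(X,\mu_{\ell^n}^{\otimes 2})$. The edge map $H^4(X,\mu_{\ell^n}^{\otimes 2}) \to E_\infty^{0,4}\subseteq E_2^{0,4}$ is the restriction to $X^s$, so its kernel $F^1$ is exactly $H^4(X,\mu_{\ell^n}^{\otimes 2})^0$. Because $E_2^{1,3}=0=E_2^{3,1}$, the graded pieces $E_\infty^{1,3}$ and $E_\infty^{3,1}$ vanish, whence $F^1=F^2$ and $F^3=F^4$. A short differential count then shows that the only differential meeting the corner $(4,0)$ is $d_3\colon E_3^{1,2}\to E_3^{4,0}$: nothing leaves $(4,0)$, the possible incoming $d_2$ starts at $E_2^{2,1}=0$ and the possible incoming $d_4$ starts at $E_2^{0,3}=0$. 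Hence $E_\infty^{4,0}$ is a quotient of $E_2^{4,0}=H^4(k,\mu_{\ell^n}^{\otimes 2})$. Likewise, the only differential touching $(2,2)$ is the outgoing $d_3\colon E_3^{2,2}\to E_3^{5,0}$ (the $d_2$ out lands in $E_2^{4,1}=0$ and there are no incoming differentials), so $E_\infty^{2,2}$ is a subgroup of $E_2^{2,2}$. The filtration therefore produces a short exact sequence
$$0 \to E_\infty^{4,0} \to H^4(X,\mu_{\ell^n}^{\otimes 2})^0 \to E_\infty^{2,2} \to 0,$$
and splicing in the surjection $H^4(k,\mu_{\ell^n}^{\otimes 2})\twoheadrightarrow E_\infty^{4,0}$ and the inclusion $E_\infty^{2,2}\hookrightarrow H^2(k,H^2(X^s,\mu_{\ell^n}^{\otimes 2}))$ yields precisely the middle-exact sequence \eqref{S1}, proving (a).

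For (b) I would note that the left-hand map of \eqref{S1} is the pullback $\pi^*$ along the structure morphism $\pi\colon X\to\Spec k$; its image lies in $H^4(X,\mu_{\ell^n}^{\otimes 2})^0$ because the restriction to $X^s$ factors through $H^4(\Spec k_s,\mu_{\ell^n}^{\otimes 2})=0$. Given a zero-cycle $z=\sum_i n_i P_i$ of degree $1$, I would form $\Phi=\sum_i n_i \on{cores}_{k(P_i)/k}\circ \iota_{P_i}^*$, where $\iota_{P_i}\colon \Spec k(P_i)\to X$. Since $\iota_{P_i}^*\pi^*=\on{res}_{k(P_i)/k}$ and $\on{cores}_{k(P_i)/k}\circ\on{res}_{k(P_i)/k}$ is multiplication by $[k(P_i):k]$, one obtains $\Phi\circ\pi^*=\deg(z)\cdot\on{id}=\on{id}$, so $\pi^*$ is injective. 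For (c), the hypothesis $\cd(k)\leq 3$ forces $E_2^{4,0}=H^4(k,\mu_{\ell^n}^{\otimes 2})=0$ (the coefficients form a finite $\ell$-primary module), whence $E_\infty^{4,0}=0$ and the right-hand map of \eqref{S1} becomes injective.

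The step demanding the most care is the differential bookkeeping: one must confirm that $E_\infty^{4,0}$ is genuinely a quotient of $E_2^{4,0}$ (no differential leaves $(4,0)$, and the only one entering originates at $E_3^{1,2}$) and that $E_\infty^{2,2}$ is genuinely a subobject of $E_2^{2,2}$, since it is exactly these two facts that make the three-term sequence of (a) meaningful and that allow the single vanishing input of (b), respectively (c), to be localized on one graded piece of the filtration.
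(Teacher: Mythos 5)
Your proof is correct and is exactly the argument the paper has in mind: the authors' entire proof reads ``C'est clair,'' i.e.\ they regard the statement as the standard consequence of the Hochschild--Serre spectral sequence given the vanishing of the rows $q=1,3$ under (\ref{h1}) and (\ref{h3}), which is precisely the filtration and differential bookkeeping you carry out, together with the usual restriction--corestriction argument for (b) and the vanishing $H^4(k,\mu_{\ell^n}^{\otimes 2})=0$ from $\cd(k)\leq 3$ for (c).
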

	\begin{proof} C'est clair. 
	\end{proof}

	En cohomologie \'etale continue, on dispose de la suite spectrale de Hochschild-Serre
	(\cite[Theorem 3.3]{J88})
	$$E_{2}^{pq} = H^p_{\cont}(k, H^q(X^s, \Z_{\ell}(2))) \Longrightarrow H^*_{\cont}(X, \Z_{\ell}(2)).$$
	
	\begin{prop}
		\label{MLH3}  
		Soit $X$ une $k$-vari\'et\'e satisfaisant (\ref{h2}).  
		
		Notons   $$H^4_{\cont}(X, \Z_{\ell} (2))^0= Ker [H^4_{\cont}(X, \Z_{\ell}(2)) \to H^4(X^s, \Z_{\ell}(2))].$$
		
		(a)  La suite spectrale de Hochschild-Serre pour la cohomologie continue donne une suite exacte
		\begin{equation} \label{S2} H^4_{\cont}(k, \Z_{\ell}(2)) \to H^4_{\cont}(X, \Z_{\ell} (2))^0 \to  H^2_{\cont}(k, H^2(X^s,\Z_{\ell}(2))).		
		\end{equation}
		
		(b) Si $X$ poss\`ede un point rationnel ou plus g\'en\'eralement un z\'ero-cycle de degr\'e~1,
		alors la fl\`eche $H^4_{\cont}(k, \Z_{\ell}(2)) \to H^4_{\cont}(X, \Z_{\ell} (2))$ est injective.
		
		(c) Si l'on a $\cd(k)\leq 3$, la fl\`eche $H^4_{\cont}(X, \Z_{\ell} (2))^0 \to  H^2_{\cont}(k, H^2(X^s,\Z_{\ell}(2)))$
		 
		est injective.
	\end{prop}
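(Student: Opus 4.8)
The plan is to run Jannsen's continuous Hochschild--Serre spectral sequence
\[
E_2^{pq} = H^p_{\cont}(k, H^q(X^s, \Z_{\ell}(2))) \Longrightarrow H^{p+q}_{\cont}(X, \Z_{\ell}(2))
\]
exactly as in the finite-coefficient situation of Proposition~\ref{MLH4}. First I would record which terms of total degree $4$ survive. Since $X$ satisfies (\ref{h1}) we have $H^1(X^s,\Z_{\ell})=0$, hence $H^1(X^s,\Z_{\ell}(2))=0$ and $E_2^{3,1}=E_2^{2,1}=E_2^{4,1}=0$; since $X$ satisfies (\ref{h2}) we have $H^3(X^s,\Z_{\ell})=0$, hence $H^3(X^s,\Z_{\ell}(2))=0$ and $E_2^{1,3}=E_2^{0,3}=0$. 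Thus on the line $p+q=4$ only $E_2^{0,4}$, the term $E_2^{2,2}=H^2_{\cont}(k,H^2(X^s,\Z_{\ell}(2)))$ and the term $E_2^{4,0}=H^4_{\cont}(k,\Z_{\ell}(2))$ can be nonzero.

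Next I would translate this into the filtration $F^\bullet$ on $H^4_{\cont}(X,\Z_{\ell}(2))$. The restriction map to $H^4(X^s,\Z_{\ell}(2))$ factors through the edge map onto $E_\infty^{0,4}\subseteq H^4(X^s,\Z_{\ell}(2))^G$, so its kernel $H^4_{\cont}(X,\Z_{\ell}(2))^0$ equals $F^1$. Because $E_2^{1,3}=E_2^{3,1}=0$ the graded pieces $\on{gr}^1$ and $\on{gr}^3$ vanish, whence $F^1=F^2$ and $F^3=F^4=E_\infty^{4,0}$, the only surviving piece in between being $\on{gr}^2=E_\infty^{2,2}$. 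This yields the short exact sequence
\[
0 \to E_\infty^{4,0} \to H^4_{\cont}(X,\Z_{\ell}(2))^0 \to E_\infty^{2,2} \to 0.
\]
Inspecting differentials shows that $E_\infty^{4,0}$ is a quotient of $E_2^{4,0}=H^4_{\cont}(k,\Z_{\ell}(2))$ (the only possible incoming differential is $d_3$ out of $E_3^{1,2}$, since the relevant $d_2$- and $d_4$-sources $E_2^{2,1}$ and $E_2^{0,3}$ vanish), while $E_\infty^{2,2}$ is a subgroup of $E_2^{2,2}$ (its incoming $d_2$ comes from $E_2^{0,3}=0$). Composing the edge maps with these identifications produces the two arrows of (a), and exactness at the middle is precisely the statement that the image $E_\infty^{4,0}$ of the first arrow equals $F^3=\ker(F^2\to E_\infty^{2,2})$. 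This gives (a).

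For (c), I would invoke Proposition~\ref{cdetcdcont} with $N=3$: when $\cd(k)\le 3$ it forces $H^4_{\cont}(k,\Z_{\ell}(2))=0$, so its quotient $E_\infty^{4,0}$ vanishes and the second arrow of (a) becomes injective. Part (b) is independent of the spectral sequence: a zero-cycle $z=\sum_i n_i P_i$ of degree $1$ furnishes a retraction of $\pi^*\colon H^4_{\cont}(k,\Z_{\ell}(2))\to H^4_{\cont}(X,\Z_{\ell}(2))$, namely $\sum_i n_i\,\on{cor}_{k(P_i)/k}\circ P_i^*$, since $\on{cor}_{k(P_i)/k}\circ\on{res}_{k(P_i)/k}$ is multiplication by $[k(P_i):k]$ and $\sum_i n_i[k(P_i):k]=1$; hence $\pi^*$ is injective.

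The routine bookkeeping of the vanishing $E_2$-terms is harmless. The places where I would be most careful, and which are the genuine content beyond Proposition~\ref{MLH4}, are the legitimacy of the edge-map and filtration formalism for Jannsen's continuous spectral sequence (convergence, and the fact that the edge map really computes restriction to $X^s$), and the availability of corestriction on continuous cohomology used in (b) --- the latter following from its existence at each finite level $H^4(\,\cdot\,,\mu_{\ell^n}^{\otimes 2})$ compatibly with the transition maps, hence on the inverse limit of Proposition~\ref{NSW}. Granting these, (a), (b) and (c) follow just as in the finite-coefficient case.
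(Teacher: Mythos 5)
Your proof is correct and follows essentially the same route as the paper: the paper treats (a) and (b) as immediate consequences of Jannsen's continuous Hochschild--Serre spectral sequence (your filtration bookkeeping and restriction--corestriction argument are exactly the details it leaves implicit), and for (c) it invokes Proposition \ref{cdetcdcont} to get $H^4_{\cont}(k,\Z_{\ell}(2))=0$, just as you do.
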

	\begin{proof} Pour (a) et (b), c'est clair. Pour le point (c) il suffit de noter que, sous l'hypoth\`ese $\cd(k)\leq 3$,
		la proposition \ref{cdetcdcont} donne  $H^4_{\cont}(k, \Z_{\ell}(2)) =0$.
	\end{proof}
	
	\begin{prop}\label{MLH2}  
		Soit $X$ une $k$-vari\'et\'e satisfaisant (\ref{h1}).
		
		(a)  La fl\`eche naturelle $$H^2_{\cont}(k, H^2(X^s, \Z_{\ell}(2))) \to 
		\lim  H^2(k,H^2(X^s, \mu_{\ell^n}^{\otimes 2}))$$
		est un isomorphisme.
		
		(b)  On a une inclusion naturelle
		$$ H^1(k,\Pic(X^s)  \otimes k_{s}^*)\{\ell\} \hookrightarrow  H^2_{\cont}(k, H^2(X^s, \Z_{\ell}(2))).$$
	\end{prop}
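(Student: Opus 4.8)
Le plan est de tout ramener au tore $S$ de groupe des cocaract\`eres $M = \Pic(X^s)$, puis de combiner les propositions \ref{NSW} et \ref{enoncereseaux}. La premi\`ere chose \`a faire est d'exploiter les isomorphismes $G$-\'equivariants fournis par l'hypoth\`ese (\ref{h1}) via la suite de Kummer : on dispose de $H^2(X^s, \mu_{\ell^n}^{\otimes 2}) \oi M \otimes \mu_{\ell^n} = S(k_s)[\ell^n]$ et de $H^2(X^s, \Z_{\ell}(2)) \oi M \otimes \Z_{\ell}(1)$, ces isomorphismes \'etant compatibles avec les applications de transition $x \mapsto x^{\ell}$. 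Ces identifications ram\`enent les deux assertions \`a des \'enonc\'es sur le $G$-r\'eseau $M$, respectivement sur le tore $S$.

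Pour le point (a), je poserais $N = M \otimes \Z_{\ell}(1)$, qui est un $\Z_{\ell}$-module de type fini sans torsion muni d'une action continue de $G$, et pour lequel $N/\ell^n = M \otimes \mu_{\ell^n}$ et $N = \lim (M \otimes \mu_{\ell^n})$. La proposition \ref{NSW} appliqu\'ee \`a $N$ avec $i = 2$ donne la suite exacte
\begin{equation*}
0 \to {\lim}^1 H^1(k, M \otimes \mu_{\ell^n}) \to H^2_{\cont}(k, M \otimes \Z_{\ell}(1)) \to \lim H^2(k, M \otimes \mu_{\ell^n}) \to 0.
\end{equation*}
Le terme ${\lim}^1$ est nul d'apr\`es la proposition \ref{enoncereseaux}(a), de sorte que la fl\`eche centrale est un isomorphisme ; en r\'einjectant les identifications de Kummer on obtient l'assertion (a).

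Pour le point (b), j'appliquerais la proposition \ref{enoncereseaux}(c) au tore $T = S$ : pour $n$ assez grand on dispose d'inclusions compatibles
\begin{equation*}
H^1(k, \Pic(X^s) \otimes k_s^*)\{\ell\} = H^1(k, S)\{\ell\} \hookrightarrow H^2(k, S(k_s)[\ell^n]) = H^2(k, H^2(X^s, \mu_{\ell^n}^{\otimes 2})).
\end{equation*}
La source \'etant un groupe fixe et ces inclusions \'etant compatibles avec les fl\`eches de transition, elles passent \`a la limite projective en restant injectives ; on en d\'eduit une inclusion de $H^1(k, \Pic(X^s) \otimes k_s^*)\{\ell\}$ dans $\lim H^2(k, H^2(X^s, \mu_{\ell^n}^{\otimes 2}))$, que le point (a) identifie \`a $H^2_{\cont}(k, H^2(X^s, \Z_{\ell}(2)))$.

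L'essentiel du travail technique, \`a savoir l'analyse de Mittag-Leffler et l'annulation des ${\lim}^1$, a d\'ej\`a \'et\'e effectu\'e dans les propositions \ref{NSW} et \ref{enoncereseaux}, de sorte que la d\'emonstration est ici de nature essentiellement formelle. Le point le plus d\'elicat \`a surveiller est la compatibilit\'e des torsions \`a la Tate et des fl\`eches de transition : il faut s'assurer que les r\'eductions $\mu_{\ell^{n+1}}^{\otimes 2} \to \mu_{\ell^n}^{\otimes 2}$ correspondent bien, via Kummer, aux applications $N/\ell^{n+1} \to N/\ell^n$ intervenant dans la proposition \ref{NSW}, et que les inclusions de \ref{enoncereseaux}(c) sont compatibles avec ce m\^eme syst\`eme projectif, afin qu'elles survivent au passage \`a la limite sans perte d'injectivit\'e.
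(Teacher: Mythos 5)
Votre démonstration est correcte et suit essentiellement la même voie que celle de l'article : celui-ci se contente d'invoquer la proposition \ref{enoncereseaux} appliquée à $M=\Pic(X^s)$, jointe à l'identification de Kummer $\Pic(X^s)\otimes\Z_{\ell}(1)\oi H^2(X^s,\Z_{\ell}(2))$, ce qui est exactement votre argument (votre point (a) ne fait que re-dériver \ref{enoncereseaux}(b) à partir de \ref{NSW} et de \ref{enoncereseaux}(a), et votre point (b) est l'application de \ref{enoncereseaux}(c) à $T=S$ suivie du passage à la limite projective). Votre vigilance sur la compatibilité des flèches de transition est justifiée mais ne pose pas de difficulté, ces compatibilités étant déjà incorporées dans les énoncés de la section \ref{galoisetcontinu}.
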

	
	\begin{proof} Il suffit d'appliquer la proposition  \ref{enoncereseaux}
		au $G$-module $M=\Pic(X^s)$, en tenant compte de
		l'identification $\Pic(X^s) \otimes \Z_{\ell}(1) \oi H^2(X^s, \Z_{\ell}(2))$
		provenant des hypoth\`eses (\ref{h1}).
	\end{proof}

	De mani\`ere g\'en\'erale \cite[Chapitre 4, 2.2.10]{deligne1977cohomologie}, pour toute $k$-vari\'et\'e lisse $X$ et tout entier $i$
	on dispose des applications ``cycle'' en cohomologie \'etale
	$$\on{cl}^i_n\colon CH^{i}(X) \to H^{2i}(X, \mu_{\ell^n}^{\otimes i}).$$

	\begin{lemma}  \label{deg1}
		Soient $k$ un corps parfait   et $X$ une $k$-vari\'et\'e  lisse g\'eom\'etriquement int\`egre
		de dimension $d\geq 1 $.  Soit $n>1$ un entier inversible dans $k$.   
		
		Supposons que $X$ poss\`ede un z\'ero-cycle de degr\'e 1.   Soit $M$ un module galoisien fini. 
		
		(a)  Soient  $i \geq 0$ et  $\eta \in H^{i}(X, M)$ dans l'image de $H^{i}(k,  M)$.
		S'il existe un ouvert non vide tel que la restriction de $\eta$ \`a $H^{i}(U, M)$
		soit nulle, alors $\eta=0$.
		
		(b)  En particulier, pour tout $i \geq 1$, toute classe de $H^{2i}(X,  \mu_{n}^{\otimes i})$
		qui est simultan\'ement dans l'image 
		de l'application cycle $CH^{i}(X) \to H^{2i}(X,  \mu_{n}^{\otimes i})$
		et de $H^{2i}(k,  \mu_{n}^{\otimes i}) \to H^{2i}(X,  \mu_{n}^{\otimes i})$
		est nulle.
	\end{lemma}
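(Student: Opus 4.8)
The plan is to deduce part (b) from part (a), and to prove part (a) by a transfer (corestriction) argument powered by the hypothesis that $X$ carries a zero-cycle of degree $1$.

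For part (a), I would write $\pi\colon X\to\Spec k$ for the structural morphism and fix $\alpha\in H^i(k,M)$ with $\eta=\pi^*\alpha$, which is possible since $\eta$ lies in the image of $H^i(k,M)$. Restricting $\eta$ along the generic point $\Spec k(X)\to U\hookrightarrow X$ (which lies in every non-empty open $U$), the hypothesis $\eta|_U=0$ forces the image $\alpha_{k(X)}$ of $\alpha$ in $H^i(k(X),M)$ to vanish. Writing the zero-cycle of degree one as $z=\sum_j m_j P_j$ with $\sum_j m_j\,[k(P_j):k]=1$, I would then use that each extension $k(P_j)/k$ is finite separable (as $k$ is perfect), so that corestriction satisfies $\on{cor}_{k(P_j)/k}\circ\on{res}_{k(P_j)/k}=[k(P_j):k]\cdot\on{id}$, yielding
\[
\alpha=\sum_j m_j\,[k(P_j):k]\,\alpha=\sum_j m_j\,\on{cor}_{k(P_j)/k}\bigl(\on{res}_{k(P_j)/k}\alpha\bigr).
\]
It therefore suffices to show $\on{res}_{k(P_j)/k}\alpha=0$ for every $j$. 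Here one uses the functorial identity $\on{res}_{k(P_j)/k}\alpha=\iota_{P_j}^*\eta$, where $\iota_{P_j}\colon\Spec k(P_j)\to X$ is the inclusion of the closed point, since $\pi\circ\iota_{P_j}$ is the structural morphism of $\Spec k(P_j)$.

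The only remaining difficulty is that some $P_j$ may lie outside $U$, so that $\iota_{P_j}^*\eta$ cannot be read off directly from $\eta|_U=0$. I would dispose of this in one of two ways. First, one can invoke the moving lemma for zero-cycles on a smooth integral variety to replace $z$ by a degree-one zero-cycle supported inside $U$: a dense open of a smooth integral variety of dimension $\geq 1$ has the same index as the variety (for each closed point $P$ one finds an integral curve through $P$ meeting $U$, and moves $P$ into $U$ on the smooth projective model by a rational function with prescribed valuations along the finitely many points outside $U$). Then every $P_j$ lies in $U$ and $\on{res}_{k(P_j)/k}\alpha=\iota_{P_j}^*(\eta|_U)=0$, so the displayed identity gives $\alpha=0$ and hence $\eta=\pi^*\alpha=0$. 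Alternatively, one argues by specialization: since $\eta$ is a global class on the smooth $X$ restricting to $\alpha_{k(X)}=0$ at the generic point, propagating this vanishing down a chain of codimension-one specializations (each a discrete valuation ring, using the injectivity of the unramified restriction $H^i(\mathcal{O},M)\hookrightarrow H^i(\on{Frac}\mathcal{O},M)$ afforded by purity) yields $\iota_{P_j}^*\eta=0$ for every closed point $P_j$.

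For part (b), let $\beta\in H^{2i}(X,\mu_n^{\otimes i})$ lie simultaneously in the image of the cycle map and of $H^{2i}(k,\mu_n^{\otimes i})$, and write $\beta=\on{cl}^i(z)$ for a codimension-$i$ cycle $z$. I would take $U=X\setminus\on{Supp}(z)$, which is non-empty since $\on{Supp}(z)$ has codimension at least $i\geq 1$ in the integral variety $X$. As the cycle map commutes with restriction to open subschemes and factors through cohomology with supports in $\on{Supp}(z)$, one gets $\beta|_U=\on{cl}^i(z|_U)=0$; since $\beta$ also lies in the image of $H^{2i}(k,\mu_n^{\otimes i})$, part (a), applied with $2i$ in place of $i$ and $M=\mu_n^{\otimes i}$, gives $\beta=0$. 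The main obstacle in the whole argument is precisely the control of the closed points $P_j$ not lying in $U$ (equivalently, the moving lemma / index invariance under passage to a dense open, or the specialization step); the transfer identity and the reduction of (b) to (a) are then formal.
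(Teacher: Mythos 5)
Your main argument is exactly the paper's proof: by the standard easy moving lemma, replace the degree-one zero-cycle by one supported in $U$, then conclude by restriction--corestriction together with contravariant functoriality of cohomology, and deduce (b) from (a) via contravariant functoriality of the cycle map for open immersions. The alternative ``specialization'' route you sketch is superfluous and, as written, under-justified (the local rings along a chain of specializations are not DVRs in dimension $>1$; one would really need the Gersten/Bloch--Ogus injectivity $H^i(\mathcal{O}_{X,P},M)\hookrightarrow H^i(k(X),M)$), but your primary argument stands on its own and matches the paper.
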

	
	\begin{proof}
		Par un lemme
		de d\'eplacement facile et connu, 
		il existe un z\'ero-cycle de degr\'e 1 \`a support dans $U$.
		Par fonctorialit\'e contravariante de la cohomologie,
		et un argument de restriction-corestriction,
		ceci \'etablit (a). L'\'enonc\'e (b)
		est  alors une cons\'equence de la fonctorialit\'e contravariante de l'application cycle
		pour les immersions ouvertes.
	\end{proof}

	\section{Cycles de codimension deux et de torsion}\label{codim2BCTR}
	Soient $k$ un corps de caract\'eristique z\'ero et $X$ une $k$-vari\'et\'e projective lisse g\'eom\'e\-tri\-quement connexe.

	\subsection{Une application cycle secondaire provenant de la \texorpdfstring{$K$}{K}-th\'eorie alg\'ebrique}\label{cycleblctra}
	On a l'\'enonc\'e suivant, qui rassemble des travaux  de Raskind et du premier auteur, et de Bruno Kahn,  faisant suite \`a des travaux de  Bloch, Suslin, Merkurjev et Suslin.
	\begin{thm}\label{bloch-ct-raskind}
		Soit $X$ une vari\'et\'e projective, lisse, g\'eom\'etriquement connexe sur un corps $k$
		de caract\'eristique z\'ero.  
		
		(a) Si l'on a $H^1(X, \O_{X})=0$ et $H^2(X^s,\Z_{\ell})_{\on{tors}}=0$ pour tout premier $\ell$, 
		alors
		le groupe  $H^0(X^s,{\mathcal K}_{2})$ est  uniquement divisible.  
		
		(b) Si l'on a $H^2(X,\O_{X})=0$ et  $H^3(X^s,\Z_{\ell})_{\on{tors}}=0$  
		pour tout premier $\ell$, alors la fl\`eche naturelle
		$$ \Pic(X^s) \otimes k_{s}^* \to H^1(X^s,{\mathcal K}_{2})$$
		a son noyau et son conoyau uniquement divisibles.
		
		(c)    Sous la combinaison des hypoth\`eses de (a) et de (b),  c'est-\`a-dire pour $X$ satisfaisant l'hypoth\`ese (\ref{h1}),
		soit $S$ le $k$-tore de groupe des cocaract\`eres le r\'eseau
		galoisien $\Pic(X^s) \oi \rm{NS}(X^s)$.  On a une suite exacte
		\begin{align*} 
			S(k) \rightarrow \operatorname{Ker}[H^3(k,& \mathbb{Q} / \mathbb{Z}(2)) \rightarrow H^3(k(X), \mathbb{Q} / \mathbb{Z}(2))] \\
			\rightarrow &\operatorname{Ker}[ CH^2(X)
			\rightarrow CH^2(X^s )]   \xrightarrow{\Phi} H^1(k, S).
		\end{align*}
		La compos\'ee des fl\`eches
		$$\Pic(X) \otimes k^* \to (\Pic(X^s) \otimes k_{s}^*)^G = S(k) \to H^3(k, \mathbb{Q} / \mathbb{Z}(2))$$
		est nulle.
		
		(d) Si l'on suppose de plus  $H^3(X^s, \mathbb{Q}_{\ell})=0$ et $H^4(X^s, \mathbb{Z}_{\ell})$ sans torsion pour tout premier $\ell$, c'est-\`a-dire pour $X$ satisfaisant l'hypoth\`ese (\ref{h4}),
		alors on a la suite exacte
		\begin{equation}\label{origin}
			S(k) \rightarrow \operatorname{Ker}[H^{ 3 } (k, \mathbb{Q} / \mathbb{Z}(2)) \rightarrow H^3(k(X), \mathbb{Q} / \mathbb{Z}(2))] \rightarrow   CH^2(X)_{\on{tors}}
			\xrightarrow{\Phi} H^1(k, S).
		\end{equation}		
	\end{thm}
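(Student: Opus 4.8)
Le plan est de d\'eduire (d) de (c) en identifiant seulement le troisi\`eme terme de la suite. La seule diff\'erence entre la suite voulue (\ref{origin}) et celle de (c) est que le terme $\operatorname{Ker}[CH^2(X) \to CH^2(X^s)]$ y est remplac\'e par $CH^2(X)_{\on{tors}}$. Il suffit donc d'\'etablir, sous les hypoth\`eses suppl\'ementaires de (\ref{h4}), l'\'egalit\'e
$$\operatorname{Ker}[CH^2(X) \to CH^2(X^s)] = CH^2(X)_{\on{tors}},$$
puis de la reporter dans la suite de (c). On prouverait les deux inclusions s\'epar\'ement.

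Pour l'inclusion $\operatorname{Ker}[CH^2(X) \to CH^2(X^s)] \subseteq CH^2(X)_{\on{tors}}$, qui vaut d\'ej\`a sous la seule hypoth\`ese (\ref{h1}), on lirait directement la suite exacte de (c). Son terme de gauche $\operatorname{Ker}[H^3(k, \Q/\Z(2)) \to H^3(k(X), \Q/\Z(2))]$ est un sous-groupe de $H^3(k, \Q/\Z(2))$, donc de torsion, tandis que $H^1(k,S)$ est de torsion, \'etant annul\'e par le degr\'e d'une extension finie d\'eployant le tore $S$. Ainsi $\operatorname{Ker}[CH^2(X) \to CH^2(X^s)]$ est extension d'un sous-groupe de $H^1(k,S)$ par un quotient du terme de gauche, donc de torsion.

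Pour l'inclusion r\'eciproque $CH^2(X)_{\on{tors}} \subseteq \operatorname{Ker}[CH^2(X) \to CH^2(X^s)]$, il suffit d'\'etablir que $CH^2(X^s)_{\on{tors}}=0$, c'est-\`a-dire que le groupe de Chow g\'eom\'etrique est sans torsion : c'est pr\'ecis\'ement ici qu'interviennent les hypoth\`eses suppl\'ementaires de (\ref{h4}). Comme (\ref{h4}) \'equivaut \`a (\ref{h3}), on a $H^3(X^s, \Z/\ell^n)=0$ pour tout $n$ et tout $\ell$ ; sur le corps s\'eparablement clos $k_{s}$ le twist est sans effet, d'o\`u $H^3(X^s, \mu_{\ell^n}^{\otimes 2})=0$. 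On invoquerait alors la suite exacte de type Merkurjev--Suslin, valable pour $Y$ lisse et $\ell$ inversible,
$$0 \to H^1(Y, \mathcal{K}_2)/\ell^n \to H^3(Y, \mu_{\ell^n}^{\otimes 2}) \to CH^2(Y)[\ell^n] \to 0,$$
appliqu\'ee \`a $Y=X^s$ : le terme m\'edian \'etant nul, on obtient $CH^2(X^s)[\ell^n]=0$ pour tout $n$, puis $CH^2(X^s)_{\on{tors}}=0$ en faisant varier $\ell$.

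En combinant les deux inclusions, l'\'egalit\'e cherch\'ee vaut sous (\ref{h4}), et sa substitution dans la suite de (c) fournit (\ref{origin}). Le point principal --- le seul endroit o\`u les hypoth\`eses nouvelles servent r\'eellement --- est l'annulation $CH^2(X^s)_{\on{tors}}=0$, qui repose sur l'identification de la torsion de $CH^2$ avec la cohomologie \'etale ; le reste se r\'eduit \`a des manipulations formelles de la suite de (c).
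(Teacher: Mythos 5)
Your proposal addresses only the deduction of (d) from (c), but the statement to be proved is the whole of Theorem \ref{bloch-ct-raskind}, and part (c) is its substantive core: it is where the map $\Phi$ and the exact sequence are actually constructed. Since your argument for (d) quotes (c) wholesale, and neither (a), (b) nor (c) is proved (or even reduced to a citation) in your proposal, the attempt is incomplete as a proof of the theorem. For comparison, the paper disposes of (a) and (b) by citing \cite{CTR85} (Thm.~1.8 and Thm.~2.12), but (c) requires a genuine argument: one starts from the exact sequence of \cite[Prop.~3.6]{CTR85}
$$ H^1(X,\mathcal{K}_{2}) \to H^1(X^s,\mathcal{K}_{2})^G \to H^1(G,K_{2}k_{s}(X)/H^0(X^s,\mathcal{K}_{2})) \to \operatorname{Ker}[CH^2(X)\to CH^2(X^s)] \to H^1(G, H^1(X^s,\mathcal{K}_{2})), $$
uses (a) (unique divisibility of $H^0(X^s,\mathcal{K}_2)$, together with that of $K_2k_s$) to replace the third term by $H^1(G,K_2(k_s(X))/K_2(k_s))$, invokes Kahn's theorem \cite{K93} to identify this group with $\operatorname{Ker}[H^3(k,\Q/\Z(2))\to H^3(k(X),\Q/\Z(2))]$, uses (b) and a finite-exponent argument to identify $H^1(k,H^1(X^s,\mathcal{K}_2))$ with $H^1(k,S)$, and finally an image comparison to replace $H^1(X^s,\mathcal{K}_2)^G$ by $S(k)$ on the left. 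None of this machinery appears in your write-up.

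Within its limited scope, your derivation of (d) from (c) is correct and is essentially the paper's own: both arguments reduce (\ref{origin}) to the equality $\operatorname{Ker}[CH^2(X)\to CH^2(X^s)]=CH^2(X)_{\on{tors}}$, whose nontrivial half is the vanishing $CH^2(X^s)_{\on{tors}}=0$ (your remark that the other inclusion follows from the sequence in (c), the kernel being sandwiched between torsion groups, is correct and makes explicit a point the paper leaves implicit). The paper obtains $CH^2(X^s)\{\ell\}=0$ from Bloch's injection $CH^2(X^s)\{\ell\}\hookrightarrow H^3(X^s,\Q_\ell/\Z_\ell(2))$ \cite[Thm.~4.3(ii)]{CT93} together with $H^3(X^s,\Q_\ell/\Z_\ell(2))=0$; you use the finite-coefficient Merkurjev--Suslin sequence instead, which works just as well. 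One caveat: the sequence you write, with $H^3(Y,\mu_{\ell^n}^{\otimes 2})$ as middle term surjecting onto $CH^2(Y)[\ell^n]$, is not exact as stated. The correct middle term is $H^1_{\zar}(Y,\mathcal{H}^2(\mu_{\ell^n}^{\otimes 2}))$ (equivalently the coniveau piece of $H^3$), which by Bloch--Ogus is only a subgroup of $H^3_{\et}(Y,\mu_{\ell^n}^{\otimes 2})$, the cokernel being the unramified group $H^0_{\zar}(Y,\mathcal{H}^3)$, which need not vanish in general. Your conclusion survives, since $CH^2(X^s)[\ell^n]$ is still a quotient of a subgroup of $H^3(X^s,\mu_{\ell^n}^{\otimes 2})=0$, but the sequence should be stated correctly.
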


	\begin{proof}
		L'\'enonc\'e (a) est un cas particulier de \cite[Thm. 1.8]{CTR85}.
		L'\'enonc\'e (b) est un cas particulier de \cite[Thm. 2.12]{CTR85}.

		Montrons les \'enonc\'es (c) et (d).
		D'apr\`es \cite[Prop. 3.6]{CTR85}, pour toute $k$-vari\'et\'e $X$  lisse
		g\'eom\'etriquement int\`egre, on a une suite exacte
		$$  H^1(X,{\mathcal K}_{2}) \to H^1(X^s,{\mathcal K}_{2})^G \to H^1(G,K_{2}k_{s}(X)/H^0(X^s,{\mathcal K}_{2})) \hskip3cm$$
		$$ \hskip2cm \to \operatorname{Ker} [ CH^2(X)
		\rightarrow CH^2(X^s )] \to H^1(G, H^1(X^s,{\mathcal K}_{2}))$$

		Pour toute $k$-vari\'et\'e lisse, un th\'eor\`eme de Bruno Kahn \cite[Thm. 3.1, Cor.~2 p. 70]{K93}
		donne
		$$
		H^1(G, K_2(k_s(X) / K_2(k_s))) \simeq \operatorname{Ker}[H^3(k, \mathbb{Q} / \mathbb{Z}(2)) \rightarrow H^3(k(X), \mathbb{Q} / \mathbb{Z}(2))].
		$$
		Ce dernier groupe est d'exposant fini, il est annul\'e par le degr\'e de toute extension finie $L/k$ avec $X(L)\neq \emptyset$.
		
		Le groupe $K_{2}k_{s}$ est uniquement divisible. Sous l'hypoth\`ese de (a), le groupe
		$H^0(X^s,{\mathcal K}_{2})$ est uniquement divisible.  On a donc
		$$H^1(G,K_{2}k_{s}(X)/H^0(X^s,{\mathcal K}_{2}))  \simeq
		H^1(G, K_2(k_s(X)) / K_2(k_s)) \hskip1cm  $$ $$ \hskip4cm \simeq \operatorname{Ker}[H^3(k, \mathbb{Q} / \mathbb{Z}(2)) \rightarrow H^3(k(X), \mathbb{Q} / \mathbb{Z}(2))].
		$$
		On peut donc r\'e\'ecrire la suite exacte ci-dessus sous la forme
		$$ H^1(X,{\mathcal K}_{2}) \to H^1(X^s,{\mathcal K}_{2})^G \to 
		\operatorname{Ker}[H^3(k, \mathbb{Q} / \mathbb{Z}(2)) \rightarrow H^3(k(X), \mathbb{Q} / \mathbb{Z}(2))]
		\hskip1cm
		$$
		$$ \hskip4cm \to \operatorname{Ker} [ CH^2(X)
		\rightarrow CH^2(X^s )] \to H^1(G, H^1(X^s,{\mathcal K}_{2})).$$
		D'apr\`es (b), la fl\`eche naturelle
		$$ \Pic(X^s) \otimes k_{s}^* \to H^1(X^s,{\mathcal K}_{2})$$
		a son noyau et son conoyau uniquement divisible. 
		On a $\Pic(X^s) \oi \rm{NS}(X^s)$ donc $\Pic(X^s) \otimes k_{s}^* \oi \rm{NS}(X^s) \otimes k_{s}^*$
		donc $H^1(k, \Pic(X^s) \otimes k_{s}^*)$ est d'exposant fini.
		On en d\'eduit que l'application
		$$ H^1(k, \Pic(X^s) \otimes k_{s}^*)  \to H^1(k, H^1(X^s,{\mathcal K}_{2}))$$
		est un isomorphisme. 
		 			
			En utilisant le fait que   le groupe $\operatorname{Ker}[H^3(k, \mathbb{Q} / \mathbb{Z}(2)) \rightarrow H^3(k(X), \mathbb{Q} / \mathbb{Z}(2))]$ est d'exposant fini,  on d\'eduit de l'\'enonc\'e (b) et d'un argument simple de cohomologie galoisienne que l'image de
			$$ H^1(X^s,{\mathcal K}_{2})^G \to \operatorname{Ker}[H^3(k, \mathbb{Q} / \mathbb{Z}(2)) \rightarrow H^3(k(X), \mathbb{Q} / \mathbb{Z}(2))] $$
			co\"{\i}ncide avec l'image de 
			\begin{align*}
S(k) = (\Pic&(X^s) \otimes k_{s}^*)^G \to  H^1(X^s,{\mathcal K}_{2})^G \\
&\to \operatorname{Ker}\left[H^3(k, \mathbb{Q} / \mathbb{Z}(2)) \rightarrow H^3(k(X), \mathbb{Q} / \mathbb{Z}(2))\right].
\end{align*}
			On a donc obtenu l'\'enonc\'e (c).
			
			Un th\'eor\`eme de Bloch  
			reposant sur le th\'eor\`eme de Merkurjev-Suslin et les conjectures de Weil donne pour toute $k$-vari\'et\'e projective et lisse une injection (voir \cite[Thm. 4.3 (ii)]{CT93}) :
			$$
			CH^2(X^s)\{\ell\} \hookrightarrow H^3(X^s, \mathbb{Q}_{\ell} / \mathbb{Z}_{\ell}(2)) .
			$$
			Sous les hypoth\`eses de (d),  on a
			$$H^3(X^s, \mathbb{Q}_{\ell} / \mathbb{Z}_{\ell}(2))=0.$$
			Ainsi $CH^2(X^s)\{\ell\}=0,$
			et donc
			$$
			CH^2(X)\{\ell\}= \operatorname{Ker}[CH^2(X)\{\ell\} \rightarrow CH^2(X^s)\{\ell\}] .
			$$
			L'\'enonc\'e (d)  suit alors de l'\'enonc\'e (c).
		\end{proof}
		
		\begin{thm}\label{bloch-ct-raskind-2}
			Supposons que $X$ satisfait (\ref{h4}). Dans chacun des cas :
			
			(a) $X$ poss\`ede un z\'ero-cycle de degr\'e $1$,
			
			(b) $cd(k) \leq 2$,
			
			\noindent
			la fl\`eche
			$$
			\Phi\colon CH^2(X)_{\on{tors}} \rightarrow H^1(k, S)
			$$
			est injective. Si de plus le module galoisien $\Pic(X^s)$ est un facteur direct d'un module de permutation, alors $\mathrm{CH}^2(\mathrm{X})_{\on{tors}}=0$.
		\end{thm}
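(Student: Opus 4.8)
The plan is to read everything off the exact sequence (\ref{origin}) supplied by Theorem~\ref{bloch-ct-raskind}(d). Setting $N := \on{Ker}[H^3(k, \Q/\Z(2)) \to H^3(k(X), \Q/\Z(2))]$, this sequence reads, under (\ref{h4}),
$$S(k) \to N \to CH^2(X)_{\on{tors}} \xrightarrow{\Phi} H^1(k,S).$$
By exactness, $\on{Ker}\Phi$ is the image of $N$ in $CH^2(X)_{\on{tors}}$; hence to obtain injectivity of $\Phi$ it suffices that $S(k)\to N$ be surjective. In both cases (a) and (b) I will in fact prove the stronger assertion $N=0$, which makes this automatic.

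For case (b) this is immediate: since $\on{car}(k)=0$ and $\cd(k)\leq 2$, one has $\cd_\ell(k)\leq 2$ for every prime $\ell$, so $H^3(k,\mu_{\ell^n}^{\otimes 2})=0$ for all $\ell$ and $n$; passing to the colimit gives $H^3(k,\Q/\Z(2))=0$, whence $N=0$. For case (a) I would instead use the fact, already recorded inside the proof of Theorem~\ref{bloch-ct-raskind}, that $N$ has finite exponent and is annihilated by $[L:k]$ for every finite extension $L/k$ with $X(L)\neq\emptyset$. A zero-cycle of degree $1$ is an integral relation $\sum_P n_P[k(P):k]=1$ over the closed points $P$ of $X$, and each residue field $k(P)$ is a finite extension of $k$ with $X(k(P))\neq\emptyset$. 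Thus $N$ is killed by every $[k(P):k]$, hence by their greatest common divisor, which equals $1$; so $N=0$ and $\Phi$ is injective. This restriction--corestriction argument is the only genuine computation, and it is routine.

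It then remains to treat the supplementary vanishing statement. Having shown $\Phi$ injective (in either case), it is enough to prove $H^1(k,S)=0$ when the cocharacter lattice $\Pic(X^s)$ of $S$ is a direct factor of a permutation $G$-module. Writing $\Pic(X^s)\oplus L'\cong P$ with $P$ a permutation module, the torus $Q$ with cocharacter module $P$ is quasi-trivial, i.e.\ a finite product of Weil restrictions $R_{k_i/k}\G_m$, so $H^1(k,Q)=0$ by Shapiro's lemma and Hilbert~90. Since $Q\cong S\times S'$, the group $H^1(k,S)$ is a direct summand of $H^1(k,Q)$ and therefore vanishes, forcing $CH^2(X)_{\on{tors}}=0$.

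I expect no serious obstacle here: the substantive input is Theorem~\ref{bloch-ct-raskind}(d), and the entire content of the present statement is the verification that $N$ vanishes under each of the two hypotheses, together with the standard fact that invertible tori have trivial $H^1$. The only point requiring a little care is the degree argument in case (a), where one must keep track of the claim that $N$ is annihilated by the degrees $[k(P):k]$ and combine this with the existence of a degree-$1$ zero-cycle.
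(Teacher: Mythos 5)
Your proposal is correct and takes essentially the same route as the paper: the paper's own proof is exactly the observation that $\operatorname{Ker}[H^3(k,\Q/\Z(2))\to H^3(k(X),\Q/\Z(2))]$ vanishes under each of the two hypotheses, followed by an appeal to the exact sequence of Theorem \ref{bloch-ct-raskind}(d). Your write-up merely supplies the details the paper leaves implicit -- the restriction-corestriction/gcd argument in case (a), the cohomological-dimension vanishing in case (b), and Shapiro plus Hilbert 90 giving $H^1(k,S)=0$ in the permutation-module case -- all of which are accurate.
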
 
		
		\begin{proof}
			On a $\operatorname{Ker}[H^3(k, \mathbb{Q} / \mathbb{Z}(2)) \rightarrow H^3(k(X), \mathbb{Q} / \mathbb{Z}(2))]=0$ sous chacune des deux hypoth\`eses. Le th\'eor\`eme \ref{bloch-ct-raskind} donne le r\'esultat.
		\end{proof}
		
		\begin{rmk}
			Si dans l'hypoth\`ese (\ref{h4}) on omet  les conditions sur la nullit\'e de la torsion des
			groupes $H^{i}(X^s,\Z_{\ell})$, tout en gardant les hypoth\`eses $H^{i}(X,\O_{X})=0$ ($i=1,2)$
			et $H^3(X^s,\Q_{\ell})=0$, c'est \`a dire en supposant que le rang $\rho$ du groupe de N\'eron-Severi g\'eom\'etrique et
			les nombres de Betti rationnels $b_{i}$
			satisfont les \'egalit\'es $b_{1}=0, b_{2}=\rho, b_{3}=0$,  	on obtient des  bornes pour l'exposant de torsion de $CH^2(X) _{\on{tors}}$ faisant intervenir 
			les entiers de torsion pour la cohomologie enti\`ere en degr\'es $2,3,4$, le degr\'e d'une extension finie sur laquelle $X$ acquiert un point rationnel,
			et le degr\'e d'une extension de $k$ d\'eployant le module galoisien $\Pic(X^s)$.
		\end{rmk}

		\begin{rmk}
			Sous des hypoth\`eses plus faibles que (\ref{h4}), Shuji Saito  \cite[Thm. B]{Sai91} (voir aussi
			\cite[Thm. 7.2 et Thm. 7.3]{CT93})
			a \'etabli le th\'eor\`eme  d'injectivit\'e suivant pour les applications cycle en cohomologie \'etale.

			\begin{thm}\label{codim2saito} Soit $k$ un corps de car. z\'ero. Soit $X$ une $k$-vari\'et\'e projective
				et lisse g\'eom\'etriquement int\`egre. Supposons $H^{i}(X,\O_{X})=0$
				pour $i=1,2$.  Supposons que l'on a $b_{3}={\rm dim} H^3(X^s,\Q_{\ell})=0$, ou  que le corps
				est $k$ de type fini sur $\Q$.
				
				Faisons de plus  l'une des hypoth\`eses
				
				(a) $X$  poss\`ede un z\'ero-cycle de degr\'e 1.
				
				(b) $cd(k) \leq 2$.
				
				Alors $CH^2(X)_{\on{tors}}$ est annul\'e par un entier $N>0$,
				et, pour tout entier $n>0$ multiple de $N$,
				l'application cycle en cohomologie \'etale
				$$CH^2(X) \to H^4_{\et}(X,\mu_{n}^{\otimes 2})$$
				est injective sur $CH^2(X)_{\text{tors}}$.
				
				En particulier, l'application cycle de Jannsen  $CH^2(X) \to H^4_{\on{cont}}(X,\Z_{\ell}(2) )$
				est injective sur la torsion $\ell$-primaire.
			\end{thm}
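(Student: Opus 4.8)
The plan is to run the $K$-cohomology machinery underlying Theorem~\ref{bloch-ct-raskind}, but this time keeping track of the torsion: here only $H^i(X,\O_X)=0$ ($i=1,2$) and $b_3=0$ (or $k$ finitely generated over $\mathbb{Q}$) are assumed, and \emph{not} the torsion-freeness of the geometric cohomology required by (\ref{h4}). I would therefore split $CH^2(X)_{\mathrm{tors}}$, via the restriction $CH^2(X)\to CH^2(X^s)$, into the relative part $\operatorname{Ker}[CH^2(X)\to CH^2(X^s)]$ and the image in $CH^2(X^s)_{\mathrm{tors}}$, and bound each separately.

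\emph{Finite exponent.} For the geometric summand I use Bloch's injection $CH^2(X^s)\{\ell\}\hookrightarrow H^3(X^s,\mathbb{Q}_\ell/\mathbb{Z}_\ell(2))$ (\cite[Thm.~4.3(ii)]{CT93}). When $b_3=0$ the group $H^3(X^s,\mathbb{Z}_\ell)$ is finite, so $H^3(X^s,\mathbb{Q}_\ell/\mathbb{Z}_\ell(2))\cong H^4(X^s,\mathbb{Z}_\ell)_{\mathrm{tors}}$ is finite and vanishes for almost all $\ell$; when $k$ is finitely generated over $\mathbb{Q}$ I appeal instead to the finiteness of the geometric torsion available in that setting. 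Either way $CH^2(X^s)_{\mathrm{tors}}$ has finite exponent. For the relative summand I read the four-term sequence of \cite[Prop.~3.6]{CTR85} used in the proof of Theorem~\ref{bloch-ct-raskind}: once the torsion-freeness hypotheses are dropped, the arguments of \cite[Thm.~1.8, Thm.~2.12]{CTR85} yield kernels and cokernels that are divisible only up to groups bounded by the torsion integers of $H^2,H^3,H^4$, so the relative part is squeezed between a quotient of $\operatorname{Ker}[H^3(k,\mathbb{Q}/\mathbb{Z}(2))\to H^3(k(X),\mathbb{Q}/\mathbb{Z}(2))]$ (annihilated by $[L:k]$ for any $L$ with $X(L)\neq\emptyset$) and $H^1(k,S)$ (annihilated by a splitting degree of $S$), both of finite exponent. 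This produces the integer $N$ killing $CH^2(X)_{\mathrm{tors}}$, assembled from the torsion integers and the degree data.

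\emph{Injectivity on torsion.} By the commutativity of cycle maps established in \cite[Prop.~1, p.~766]{CTSS83}, for $n$ a multiple of $N$ the mod-$n$ map $\on{cl}^2_n$ restricted to $CH^2(X)_{\mathrm{tors}}$ has the same kernel as Bloch's secondary map $\lambda^2\colon CH^2(X)_{\mathrm{tors}}\to H^3(X,\mathbb{Q}/\mathbb{Z}(2))$, so it suffices to prove $\lambda^2$ injective. Under either hypothesis the arithmetic obstruction vanishes: $\cd(k)\le 2$ forces $H^3(k,\mathbb{Q}/\mathbb{Z}(2))=0$, while a zero-cycle of degree $1$ kills $\operatorname{Ker}[H^3(k,\mathbb{Q}/\mathbb{Z}(2))\to H^3(k(X),\mathbb{Q}/\mathbb{Z}(2))]$ by restriction--corestriction. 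The exact sequence of Theorem~\ref{bloch-ct-raskind} then makes $\Phi\colon\operatorname{Ker}[CH^2(X)\to CH^2(X^s)]\to H^1(k,S)$ injective, so the relative torsion is detected by the graded piece of $\lambda^2$ corresponding to $\Phi$, while the geometric torsion is detected by restriction to $X^s$ together with Bloch's injection. Hence $\lambda^2$, and therefore $\on{cl}^2_n$, is injective on $CH^2(X)_{\mathrm{tors}}$.

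\emph{Jannsen's map and the main difficulty.} Fixing $\ell$ and taking $m$ with $\ell^m$ divisible by the $\ell$-part of $N$, the Jannsen map followed by the reduction $H^4_{\cont}(X,\mathbb{Z}_\ell(2))\to H^4(X,\mu_{\ell^m}^{\otimes 2})$ equals $\on{cl}^2_{\ell^m}$; injectivity of the latter on $\ell$-primary torsion then yields injectivity of the former. The step I expect to be most delicate is the identification of the $K$-theoretic boundary $\Phi$ with the correct graded piece of the \'etale cycle class, i.e.\ the compatibility between the coniveau and Hochschild--Serre filtrations; this is precisely the comparison the present paper carries out in the appendix (Theorem~\ref{compatible}), and it is what lets the finite-coefficient statement transport to the $\mathbb{Z}_\ell$-adic one.
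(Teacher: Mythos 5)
The paper does not prove Th\'eor\`eme \ref{codim2saito}: it records it as a theorem of S.~Saito \cite[Thm. B]{Sai91} (see also \cite[Thm. 7.2 et 7.3]{CT93}), so your attempt has to be measured against Saito's argument, not against anything in the text. Your skeleton (geometric part plus relative part, finite exponent, then passage from finite coefficients to Jannsen's map, this last step being indeed immediate from \cite[Thm. 3.23]{J88}) is reasonable, but its central step is unsound. You assert, citing \cite[Prop. 1, p. 766]{CTSS83}, that $\on{cl}^2_n$ restricted to $CH^2(X)_{\on{tors}}$ has \emph{the same kernel} as the Bloch-type map $\lambda^2$, and conclude that it suffices to prove $\lambda^2$ injective. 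That proposition gives only one inclusion: it identifies $\on{cl}^2_n$ on $n$-torsion with the Bockstein $\beta\colon H^3(X,\mu_n^{\otimes 2})\to H^4(X,\mu_n^{\otimes 2})$ composed with $\lambda^2_n$ (up to sign), hence $\operatorname{Ker}(\lambda^2_n)\subseteq\operatorname{Ker}(\on{cl}^2_n)$ on torsion, and injectivity of $\lambda^2$ implies nothing about injectivity of $\on{cl}^2_n$. The discrepancy is $\operatorname{Ker}(\beta)$ (in the limit, the divisible part of $H^3$), and proving that it meets the image of $\lambda^2$ trivially is precisely the hard point: this is why $\lambda$ refines Jannsen's map \emph{strictly} in general \cite{AS22}, and why the present paper needs its entire appendix to compare $\operatorname{Ker}(\Theta)$ and $\operatorname{Ker}(\Phi)$ even under the much stronger hypothesis (\ref{h4}). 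For a stark illustration that your inference is invalid as a general principle: over an algebraically closed field $\lambda^2$ is always injective (Bloch), yet for an abelian surface $\on{cl}^2$ annihilates the infinite group $CH_0(X)_{\on{tors}}$, since $H^4(X,\Z_\ell(2))\simeq \Z_\ell$ is torsion-free. (Such an $X$ violates $H^i(X,\O_X)=0$, of course; the point is only that your reduction cannot be a formal consequence of \cite[Prop. 1]{CTSS83}.) You also cannot repair the step by invoking Th\'eor\`eme \ref{compatible}, as your final paragraph suggests: that theorem assumes (\ref{h4}) --- torsion-freeness of $\Pic(X^s)$, $\Br(X^s)=0$, $H^3(X^s,\Z/\ell^n)=0$ --- none of which is among the hypotheses of Th\'eor\`eme \ref{codim2saito}.

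There are two further gaps, at exactly the places where Saito works hardest. (i) When $k$ is finitely generated over $\Q$ and $b_3\neq 0$, the ``finiteness of the geometric torsion'' you appeal to is not available: Bloch's injection bounds $CH^2(X^s)\{\ell\}$ only by a group of cofinite type whose divisible part can have corank up to $b_3$. What Saito uses instead is a weight argument: $H^3(X^s,\Q_\ell(2))$ is pure of weight $-1$, so over a finitely generated $k$ the invariants, under an open subgroup of $G$, of the divisible part of $H^3(X^s,\Q_\ell/\Z_\ell(2))$ are finite; this is what controls the image of $CH^2(X)_{\on{tors}}$ in $CH^2(X^s)$, both for the exponent bound and for injectivity. (ii) Under the present hypotheses (\ref{h1}) may fail ($\Pic(X^s)$ may have torsion, $\Br(X^s)$ may be nonzero), so Th\'eor\`eme \ref{bloch-ct-raskind}(c) is not available as stated; the sequences of \cite{CTR85} acquire finite error terms, and your claim that $\Phi$ is then injective under (a) or (b) degrades to a bounded kernel, which does not suffice for the injectivity statement. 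In sum, the three load-bearing steps --- the comparison of kernels, the geometric part over finitely generated fields, and the relative part without (\ref{h1}) --- are each missing, and together they are Saito's theorem rather than inputs to it.
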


			Le th\'eor\`eme \ref{codim2saito} implique :  Pour $X/k$ satisfaisant $H^{i}(X,\O_{X})=0$
			pour $i=1,2$, si  $k$ est de type fini sur $\Q$ 
			et  $X$  poss\`ede un z\'ero-cycle de degr\'e 1,  le groupe 
			$CH^2(X)_{\on{tors}}$ est fini  (Saito \cite[Thm. D]{Sai91},
			voir aussi \cite[Thm. 7.6]{CT93}).
		\end{rmk}

		\subsection{Une application cycle de Jannsen secondaire}\label{cyclejannsen}
		
		Soit $X$ une $k$-vari\'et\'e projective et lisse satisfaisant (\ref{h4}). D'apr\`es la proposition \ref{MLH3}(a), sous les hypoth\`eses $H^1(X^s, \mathbb{Z}_{\ell})=0$ et $H^3(X^s, \mathbb{Z}_{\ell})=0$, la suite spectrale de Hochschild-Serre pour la cohomologie continue donne une suite exacte
		\begin{equation}\label{*hs} H_{\mathrm{cont}}^4(k, \mathbb{Z}_{\ell}(2)) \rightarrow H_{\mathrm{cont}}^4(X, \mathbb{Z}_{\ell}(2))^0 \rightarrow H_{\mathrm{cont}}^2(k, H^2(X^s, \mathbb{Z}_{\ell}(2))),
		\end{equation}
		o\`u \[H_{\mathrm{cont}}^4(X, \mathbb{Z}_{\ell}(2))^0\coloneqq \operatorname{Ker}[H_{\mathrm{cont}}^4(X, \mathbb{Z}_{\ell}(2)) \rightarrow H^4(X^s, \mathbb{Z}_{\ell}(2))].\]
		Sous l'hypoth\`ese (\ref{h4}), on a $H^1(X^s, \mathbb{Z} / \ell^n)=0$ et $H^3(X^s, \mathbb{Z} / \ell^n)=0$. Par la proposition \ref{MLH4}(a), la suite spectrale de Hochschild-Serre pour la cohomologie \'etale usuelle donne une suite exacte
		\begin{equation}\label{**hs}
			H^4(k, \mu_{\ell^n}^{\otimes 2}) \rightarrow H^4(X, \mu_{\ell^n}^{\otimes 2})^0 \rightarrow H^2(k, H^2(X^s, \mu_{\ell^n}^{\otimes 2})),
		\end{equation}
		o\`u 
		\[H^4(X, \mu_{\ell^n}^{\otimes 2})^0=\operatorname{Ker}[H^4(X, \mu_{\ell^n}^{\otimes 2}) \rightarrow H^4(X^s, \mu_{\ell^n}^{\otimes 2})].\]
  
		Pour $X$ une $k$-vari\'et\'e lisse, Jannsen \cite[Theorem 3.23]{J88} a d\'efini des applications cycle
		$$
		\on{cl}^i\colon CH^i(X) \rightarrow H_{\mathrm{cont}}^{2 i}(X, \mathbb{Z}_{\ell}(i)),
		$$
		compatibles avec les applications cycle usuelles
		$$
		\on{cl}^i_n\colon CH^i(X) \rightarrow H^{2 i}(X, \mu_{\ell^n}^{\otimes i}),
		$$
		Nous nous int\'eressons ici au cas $i=2$. Pour $X / k$ une vari\'et\'e satisfaisant (\ref{h4}), et $z \in CH^2(X)\{\ell\}$, l'image de $z$ dans $H^4(X^s, \mathbb{Z}_{\ell}(2))$ est de torsion donc nulle. On obtient donc des applications
		$$
		\Theta\colon CH^2(X)_{\on{tors}} \xrightarrow{\on{cl}^2} H_{\mathrm{cont}}^4(X, \mathbb{Z}_{\ell}(2))^0 \rightarrow H_{\mathrm{cont}}^2(k, H^2(X^s, \mathbb{Z}_{\ell}(2))) .
		$$
		On peut composer avec la r\'eduction des coefficients
		$$
		H_{\mathrm{cont}}^2(k, H^2(X^s, \mathbb{Z}_{\ell}(2))) \rightarrow H^2(k, H^2(X^s, \mu_{\ell^n}^{\otimes 2})) .
		$$
		Notons $\Theta_n(z)$ l'image de $z$.
		
		\begin{prop}\label{Zell-or-elln}
			Supposons que $X$ satisfait (\ref{h4}). Pour tout $z \in CH^2(X)_{\on{tors}}$, on a $\Theta(z)=0$
			si et seulement si $\Theta_n(z)=0$ pour tout entier $n>0$.
		\end{prop}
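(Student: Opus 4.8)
The plan is to reduce the statement to the elementary fact that an element of an inverse limit vanishes precisely when all of its components do, the genuine content being already packaged in Proposition \ref{MLH2}(a).

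First I would record the tautological observation that, by the very construction of $\Theta_n$, one has $\Theta_n(z) = r_n(\Theta(z))$, where
$$ r_n\colon H^2_{\cont}(k, H^2(X^s, \Z_{\ell}(2))) \to H^2(k, H^2(X^s, \mu_{\ell^n}^{\otimes 2}))$$
is the coefficient-reduction map. The forward implication is then immediate: if $\Theta(z) = 0$, then $\Theta_n(z) = r_n(0) = 0$ for every $n > 0$.

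For the converse, I would invoke Proposition \ref{MLH2}(a), which applies because (\ref{h4}) implies (\ref{h1}): the natural map
$$ H^2_{\cont}(k, H^2(X^s, \Z_{\ell}(2))) \xrightarrow{\ \sim\ } \lim_n H^2(k, H^2(X^s, \mu_{\ell^n}^{\otimes 2}))$$
is an isomorphism. This natural map is by definition the one assembling the reductions $r_n$, so that under the identification the class $\Theta(z)$ corresponds to the compatible system $(r_n(\Theta(z)))_n = (\Theta_n(z))_n$, and each $r_n$ is the composite of the isomorphism with the $n$-th projection of the inverse limit. Assuming $\Theta_n(z) = 0$ for all $n$ then forces $\Theta(z)$, viewed inside $\lim_n H^2(k, H^2(X^s, \mu_{\ell^n}^{\otimes 2}))$, to have all components zero, hence to vanish; therefore $\Theta(z) = 0$.

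Since the compatibility of $r_n$ with the $n$-th projection is built into the meaning of the ``natural'' map of Proposition \ref{MLH2}(a), there is essentially no obstacle at this level: all the substantive work---the passage from continuous cohomology to the inverse limit of finite-coefficient groups---has already been carried out through the Mittag--Leffler analysis of \S\ref{galoisetcontinu} (Proposition \ref{enoncereseaux}) feeding into Proposition \ref{MLH2}. The only thing I expect to spell out is this bookkeeping identification, after which the argument closes at once.
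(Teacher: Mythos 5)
Your proof is correct and follows essentially the same route as the paper: the compatibility $\Theta_n = r_n \circ \Theta$ gives the forward direction, and the converse follows from the isomorphism $H^2_{\cont}(k, H^2(X^s,\Z_{\ell}(2))) \oi \lim_n H^2(k, H^2(X^s,\mu_{\ell^n}^{\otimes 2}))$ of Proposition \ref{MLH2}(a), whose injectivity forces $\Theta(z)=0$ once all its components vanish. The bookkeeping identification of the natural map with the system of reductions $r_n$ is exactly what the paper's proof also relies on, so there is nothing to add.
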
 
		
		\begin{proof}
			Via les applications naturelles \[H_{\mathrm{cont}}^2(k, H^2(X^s, \mathbb{Z}_{\ell}(2))) \rightarrow H^2(k, H^2(X^s, \mu_{\ell^n}^{\otimes 2})),\] l'application $\Theta$ est compatible avec les applications $\Theta_n$ d\'efinies ci-dessus. Il suffit alors de noter que, d'apr\`es la proposition \ref{MLH2}(a), la fl\`eche
			$$
			H_{\mathrm{cont}}^2(k, H^2(X^s, \mathbb{Z}_{\ell}(2))) \rightarrow \lim H^2(k, H^2(X^s, \mu_{\ell^n}^{\otimes 2}))
			$$
			est un isomorphisme. 
		\end{proof} 
		
		\begin{prop}\label{noyau-cd3}
			Supposons que $X$ satisfait (\ref{h4}). Sous l'une ou l'autre des hypoth\`eses
			
			(a) $X$ poss\`ede un z\'ero-cycle de degr\'e $1$,
			
			(b)  $cd(k) \leq 3$,
			
			\noindent on a :
			
			(i) $\on{Ker}(\on{cl}_n^2)\set{\ell}=\on{Ker}(\Theta_n)$ pour tout $n\geq 1$ et
			
			(ii) $\on{Ker}(\on{cl}^2)\set{\ell}=\on{Ker}(\Theta)\set{\ell}$.
		\end{prop}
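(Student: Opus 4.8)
The plan is to prove, for each $n\geq 1$ and each $z\in CH^2(X)\{\ell\}$, the two equivalences $\on{cl}_n^2(z)=0\Leftrightarrow\Theta_n(z)=0$ and $\on{cl}^2(z)=0\Leftrightarrow\Theta(z)=0$. All four maps vanish on the prime-to-$\ell$ torsion of $CH^2(X)_{\on{tors}}$, their targets being $\ell$-power-torsion groups or $\Z_{\ell}$-modules, so comparing kernels reduces to $CH^2(X)\{\ell\}$ and these equivalences will yield (i) and (ii). For $z\in CH^2(X)\{\ell\}$, hypothesis (\ref{h4}) gives $CH^2(X^s)\{\ell\}=0$ (as in the proof of Theorem \ref{bloch-ct-raskind}(d)), so the image of $z$ in $CH^2(X^s)$ vanishes; hence $\on{cl}_n^2(z)\in H^4(X,\mu_{\ell^n}^{\otimes 2})^0$ and, as observed in \S\ref{cyclejannsen}, $\on{cl}^2(z)\in H^4_{\cont}(X,\Z_{\ell}(2))^0$. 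By construction $\Theta_n(z)=\beta(\on{cl}_n^2(z))$ and $\Theta(z)=\beta'(\on{cl}^2(z))$, where $\beta$ and $\beta'$ are the edge maps in the sequences (\ref{**hs}) and (\ref{*hs}). The implications $\on{cl}_n^2(z)=0\Rightarrow\Theta_n(z)=0$ and $\on{cl}^2(z)=0\Rightarrow\Theta(z)=0$ are immediate, so the content is in the converses.

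For (i), assume $\Theta_n(z)=\beta(\on{cl}_n^2(z))=0$. Under hypothesis (b), Proposition \ref{MLH4}(c) asserts that $\beta$ is injective, so $\on{cl}_n^2(z)=0$. Under hypothesis (a), exactness of (\ref{**hs}) puts $\on{cl}_n^2(z)$ in the image of $H^4(k,\mu_{\ell^n}^{\otimes 2})\to H^4(X,\mu_{\ell^n}^{\otimes 2})$; being also a cycle class, it is then killed by Lemma \ref{deg1}(b). This proves (i).

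For (ii), assume $\Theta(z)=\beta'(\on{cl}^2(z))=0$. Under hypothesis (b), Proposition \ref{MLH3}(c) gives injectivity of $\beta'$, hence $\on{cl}^2(z)=0$ at once. Under hypothesis (a) the argument is more delicate, and this is where I expect the main obstacle to lie: Lemma \ref{deg1} is stated only with finite coefficients, so it cannot be quoted directly at the $\Z_{\ell}$-level, and one must instead run its restriction--corestriction argument inside continuous cohomology. Exactness of (\ref{*hs}) shows that $\on{cl}^2(z)=p^*\xi$ for some $\xi\in H^4_{\cont}(k,\Z_{\ell}(2))$, with $p\colon X\to\Spec k$. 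Setting $U=X\setminus|z|$, the cycle $z$ restricts to $0$ in $CH^2(U)$, so by contravariance of Jannsen's cycle class for the open immersion $U\hookrightarrow X$ the class $\on{cl}^2(z)$ restricts to $0$ on $U$. Restricting to a closed point $P\in U$ gives $\on{res}_{k(P)/k}\xi=0$, whence $[k(P):k]\,\xi=0$ since $\on{cores}\circ\on{res}=[k(P):k]$ in continuous cohomology; summing over a degree-one zero-cycle supported on $U$ (which exists by the moving lemma used in Lemma \ref{deg1}) yields $\xi=0$, so $\on{cl}^2(z)=0$. This completes (ii).

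The principal difficulty is therefore case (a) of (ii): transferring the degree-one vanishing statement of Lemma \ref{deg1} from finite to $\Z_{\ell}$-coefficients. To legitimize the restriction--corestriction argument one must know that continuous $\ell$-adic cohomology carries corestrictions satisfying $\on{cores}\circ\on{res}=\deg$ for finite separable extensions, and that Jannsen's cycle class is contravariant for open immersions; granting these standard facts, the elementary argument behind Lemma \ref{deg1} applies verbatim.
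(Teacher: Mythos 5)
Your proof is correct and takes essentially the same route as the paper's (which is only a few lines long): in case (b) you use exactly Propositions \ref{MLH4}(c) and \ref{MLH3}(c), i.e.\ the vanishing of $H^4(k,\mu_{\ell^n}^{\otimes 2})$ and, via Proposition \ref{cdetcdcont}, of $H^4_{\cont}(k,\Z_{\ell}(2))$, while in case (a) you use Lemma \ref{deg1} for (i) and, for (ii), spell out the restriction--corestriction argument in continuous cohomology that the paper invokes only as ``un analogue du lemme \ref{deg1}''. The one ingredient you treat as ``by construction'' --- that $\Theta_n=\beta\circ\on{cl}^2_n$, which rests on Jannsen's compatibility $\on{cl}^2_n=\mathrm{red}\circ\on{cl}^2$ together with the compatibility of the continuous and finite-coefficient Hochschild--Serre spectral sequences --- is used just as implicitly by the paper, so this is not a gap relative to its own standard of rigor.
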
 
		
		\begin{proof}
			(i) On utilise  la suite exacte (\ref{**hs}).  Dans le cas (a) on utilise le lemme \ref{deg1}.
			Dans le cas (b), on a $H^4(k, \mu_{\ell^n}^{\otimes 2})=0$.
			
			(ii) Dans le cas (a), on utilise un analogue du lemme \ref{deg1}.
			Dans le cas (b), on utilise la proposition \ref{cdetcdcont}, qui assure $H_{ {cont}}^4(k, \mathbb{Z}_{\ell}(2))=0$. La suite exacte (\ref{*hs}) donne alors que l'application $H_{\mathrm{cont}}^4(X, \mathbb{Z}_{\ell}(2))^0 \rightarrow H_{\mathrm{cont}}^2(k, H^2(X^s, \mathbb{Z}_{\ell}(2)))$ est injective.
		\end{proof}

		\subsection{Comparaison des deux applications cycle secondaires}
		La d\'emonstra\-tion du th\'eor\`eme suivant est \'etonnamment d\'elicate. Nous y consacrons l'appendice de cet article.
		
		\begin{thm}[Th\'eor\`eme \ref{compatible}]\label{compatible-body}
			Supposons que $X$ satisfait (\ref{h4}).
			Soit $\Phi$ l'application cycle sur  
			$CH^2(X)_{\on{tors}}$ d\'efinie au paragraphe \ref{cycleblctra}.
			Soit $\Theta$ l'application cycle  sur $CH^2(X)_{\on{tors}}$ d\'efinie au paragraphe \ref{cyclejannsen}.
			Alors il existe un isomorphisme
			$$\on{Ker}(\Theta)\set{\ell} \simeq \on{Ker}(\Phi)\set{\ell}.$$
		\end{thm}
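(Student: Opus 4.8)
The plan is to realize both secondary cycle maps as edge (transgression) maps of one and the same filtration, namely the Leray filtration for the structure morphism $X \to \Spec k$, but computed in two a priori different cohomology theories — Galois cohomology of the $K$-theory sheaves (which produces $\Phi$, via Theorem \ref{bloch-ct-raskind}) and Jannsen's continuous $\ell$-adic étale cohomology (which produces $\Theta$, via §\ref{cyclejannsen}) — and then to show that these two theories are linked by a single object, the weight-two étale motivic (Lichtenbaum) complex $\Z(2)$. First I would recall the two properties of $\Z(2)$ that make it the common bridge, both due to Lichtenbaum and Kahn: its degree-two cohomology sheaf is $\mathcal{H}^2(\Z(2)) \cong \mathcal{K}_2$, so that the $\mathcal{K}_2$-cohomology entering the definition of $\Phi$ in §\ref{cycleblctra} is motivic cohomology of $X^s$ in weight two; and the Merkurjev–Suslin theorem gives, in degrees $\leq 2$, a quasi-isomorphism $\Z(2)\otimes^{L}\Z/\ell^n \simeq \mu_{\ell^n}^{\otimes 2}$, hence after $\ell$-adic completion a comparison with the coefficients $\Z_{\ell}(2)$ governing $\Theta$. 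Kahn's theorem \cite{K93}, already used in the body to identify $H^1(G, K_2(k_s(X))/K_2(k_s))$ with $\on{Ker}[H^3(k,\Q/\Z(2)) \to H^3(k(X),\Q/\Z(2))]$, is exactly the input that translates the $K$-theoretic transgression into motivic terms.

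Next I would reduce to finite coefficients. By Proposition \ref{Zell-or-elln}, a class $z \in CH^2(X)_{\on{tors}}$ lies in $\on{Ker}(\Theta)$ if and only if $\Theta_n(z)=0$ for every $n$, and by Proposition \ref{MLH2}(a) the target $H^2_{\cont}(k, H^2(X^s,\Z_\ell(2)))$ is the inverse limit of the groups $H^2(k, H^2(X^s,\mu_{\ell^n}^{\otimes 2}))$. At each finite level the comparison $\Z(2)/\ell^n \simeq \mu_{\ell^n}^{\otimes 2}$ is an honest isomorphism of étale sheaves, so the finite-coefficient edge maps $\Theta_n$ coming from the $\ell$-adic Hochschild–Serre spectral sequence of Proposition \ref{MLH4}(a) can be identified with the reductions mod $\ell^n$ of the motivic transgression attached to $\Z(2)$. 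I would then assemble a commutative ladder comparing three spectral sequences for $X \to \Spec k$: the motivic Leray spectral sequence of $\Z(2)$ and its reductions mod $\ell^n$, the $\mathcal{K}_2$-Hochschild–Serre sequence of Theorem \ref{bloch-ct-raskind}, and Jannsen's continuous $\ell$-adic sequence. The crucial matchings are $H^2(X^s,\Z_\ell(2)) \cong \Pic(X^s)\otimes\Z_\ell(1) = S(k_s)\otimes\Z_\ell$ feeding the target of $\Theta$, and, via Kahn's identification, the $\mathcal{K}_2$-term feeding $H^1(k,S)$, the target of $\Phi$. Proposition \ref{MLH2}(b) supplies the canonical inclusion $H^1(k,S)\{\ell\} \hookrightarrow H^2_{\cont}(k, H^2(X^s,\Z_\ell(2)))$, and the goal is to prove that on $\ell$-primary torsion $\Theta$ lands in this subgroup and agrees there, up to the above identifications, with $\Phi$.

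Once that compatibility square is established, the comparison of kernels is immediate: under (\ref{h4}) one has $CH^2(X^s)\{\ell\}=0$ (as in the proof of Theorem \ref{bloch-ct-raskind}(d)), so every $\ell$-primary torsion class is geometrically trivial and the only obstruction recorded by either map is the common arithmetic class in $H^1(k,S)\{\ell\}$; since the inclusion of Proposition \ref{MLH2}(b) is injective, the two kernels coincide as subgroups of $CH^2(X)\{\ell\}$, which yields the asserted isomorphism $\on{Ker}(\Theta)\{\ell\}\simeq\on{Ker}(\Phi)\{\ell\}$. Throughout, the passage from finite coefficients to the $\ell$-adic limit is legitimate because of the Mittag–Leffler and $\lim^1$-vanishing statements of §\ref{galoisetcontinu} (Propositions \ref{enoncereseaux} and \ref{MLH2}), which guarantee that no information is lost in taking inverse limits.

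The hard part, and the reason the argument is genuinely delicate, is precisely the compatibility at the level of the complex $\Z(2)$: one must verify that Jannsen's $\ell$-adic cycle class and the $K$-theoretic construction are induced by one and the same motivic cycle class on $H^4$ of $\Z(2)$, and that the corresponding transgressions in the two spectral sequences agree after reduction mod $\ell^n$ and passage to the limit. This forces careful bookkeeping of the spectral-sequence differentials, control of the uniquely divisible kernels and cokernels appearing in Theorem \ref{bloch-ct-raskind}(a),(b), and a precise matching of Kahn's computation with the Lichtenbaum comparison isomorphism — a synthesis that does not follow formally and is where the real work of the appendix lies.
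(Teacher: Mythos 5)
Your general architecture --- the Lichtenbaum complex $\Gamma(2)$ as the bridge between the $K$-theoretic map $\Phi$ and Jannsen's map $\Theta$, reduction to finite coefficients via Propositions \ref{Zell-or-elln} and \ref{MLH2}, and a comparison of Hochschild--Serre edge maps --- is indeed the skeleton of the paper's appendix. But there is a genuine gap at the decisive step. Your argument hinges on proving that, on $\ell$-primary torsion, $\Theta$ coincides (up to the stated identifications) with the composite of $\Phi$ and the inclusion $H^1(k,S)\set{\ell} \hookrightarrow H^2_{\cont}(k, H^2(X^s,\Z_{\ell}(2)))$ of Proposition \ref{MLH2}(b); from this you deduce that the two kernels are \emph{equal as subgroups} of $CH^2(X)\set{\ell}$. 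This compatibility is precisely what the paper states it cannot prove: the remark following Theorem \ref{compatible-body} says explicitly that it is not known whether the composite $CH^2(X)\set{\ell} \to H^1(k,S)\set{\ell} \to H^2(k,\Pic(X^s)\otimes\mu_{\ell^n}) \to H^2(k,H^2(X^s,\mu_{\ell^n}^{\otimes 2}))$ agrees, even up to sign, with $\Theta_n$, and the closing remark of the appendix notes that equality of the kernels as subgroups of $CH^2(X)_0$ would follow from a commutative square --- relating the isomorphism $CH^2(X)_0 \oi H^1(k,Z)$ of \cite[Prop. 3.6]{CTR85} to Kahn's isomorphism $CH^2(X)_0 \oi F^1\H^4(k(X)/X,\Gamma(2))$ --- which remains unverified. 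Your final paragraph acknowledges this as ``the hard part'' but offers no argument for it; deferring it to careful bookkeeping is not a proof, and the difficulty is real, since the two identifications of $CH^2(X)_0$ arise from entirely different constructions (the Gersten/$\mc{K}_2$-cohomology one versus the motivic cycle class of Lichtenbaum--Kahn) and nothing formal forces them to match.

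The paper's proof is structured precisely to avoid this compatibility: it never compares $\Theta$ and $\Phi$ as maps. Instead it (i) factors $\Theta_n$, up to sign, as $\delta_n \circ f \circ \beta$, where $\beta\colon CH^2(X)_0 \oi H^1(k,\H^3(k_s(X)/X^s,\Gamma(2)))$ is Kahn's isomorphism, $f$ is induced by the sequence (\ref{gamma-2-galois}), and $\delta_n$ is the boundary of (\ref{exact-because-divisible}); this factorization rests on a delicate double-complex statement (Proposition \ref{bord} and Remark \ref{translation}) comparing boundary maps of exact triangles with Hochschild--Serre edge homomorphisms, an ingredient entirely absent from your outline; (ii) shows that $\delta_n$ is injective on $\ell$-primary torsion for $n$ large, using diagram (\ref{uniq-divisible}) and Proposition \ref{enoncereseaux}, whence $\on{Ker}(\Theta)\set{\ell} = \beta^{-1}(\on{Ker}(f)\set{\ell})$; and (iii) identifies $\on{Ker}(f)\set{\ell}$ with $\on{Ker}(\Phi)\set{\ell}$ \emph{abstractly}, via the comparison of the Gersten sequence (\ref{gq-1}) with the Lichtenbaum sequence (Proposition \ref{ct-raskind-lichtenbaum}) together with Lemma \ref{phi-def}. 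Because step (iii) is an identification of two extensions of $G$-modules and not of two maps out of $CH^2(X)_0$, the conclusion is only an abstract isomorphism $\on{Ker}(\Theta)\set{\ell} \simeq \on{Ker}(\Phi)\set{\ell}$ --- which is all the theorem claims, and all that is needed for Theorem \ref{exemplevoulu}. Your route, if completed, would prove a strictly stronger statement that the authors leave open; as written, the key square is missing and the argument does not go through.
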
 
		
		\begin{rmk}
			Soit $X$ une $k$-vari\'et\'e satisfaisant (\ref{h4}).
			Soit $n$ un entier positif. 
			On a un homomorphisme compos\'e :
			\begin{align*} CH^2(X)\{\ell \}  \to &H^1(k,S) \{\ell\}  \to H^2(k, S[\ell^n])) = \\ &= H^2(k, \on{Pic}(X^s) \otimes \mu_{\ell^n})
				\to H^2(k, H^2(X^s,  \mu_{\ell^n}^{\otimes 2})).
			\end{align*}
			La fl\`eche $CH^2(X)\{\ell \}  \to H^1(k,S) \{\ell\} $ est induite par $\Phi$. 
			La multiplication par $\ell^n$ sur le $k$-tore $S$ d\'efinit une suite exacte
			$$ 1 \to S[\ell^n] \to S  \to S \to 1$$
			qui induit la fl\`eche $H^1(k,S) \{\ell\}  \to H^2(k, S[\ell^n]) $. 
			Pour $n$ assez grand, cette fl\`eche  est injective
			(Proposition \ref{enoncereseaux}(b)).
			La fl\`eche naturelle $\on{Pic}(X^s)/ \ell^n \to H^2(X^s, \mu_{\ell^n})$, qui sous l'hypoth\`ese (\ref{h4}) est un isomorphisme,  induit la derni\`ere fl\`eche. 	
			Nous ne savons pas si la fl\`eche compos\'ee ci-dessus co\"{\i}ncide (au signe pr\`es)
			avec l'application $\Theta_{n}$ restreinte \`a la torsion $\ell$-primaire.
		\end{rmk}

		\begin{thm}\label{inj-presque-final} 
			Supposons que $X$ satisfait (\ref{h4}).  Sous l'une  ou l'autre des hypoth\`eses
			
			(a) $X$ poss\`ede un z\'ero-cycle de degr\'e $1$,
			
			(b)  $cd(k) \leq 3$,
			
			\noindent
			on a  un isomorphisme $\on{Ker}(\Phi)\set{\ell} 
			\simeq\on{Ker}(\on{cl}^2)\set{\ell}$.
		\end{thm}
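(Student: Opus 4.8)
The plan is to obtain the statement as a composite of two identifications that are already in hand, so that at this stage the argument is essentially formal. First I would invoke Proposition \ref{noyau-cd3}(ii): under hypothesis (\ref{h4}) together with either (a) or (b), it gives the equality
\[\on{Ker}(\on{cl}^2)\set{\ell}=\on{Ker}(\Theta)\set{\ell}\]
of subgroups of $CH^2(X)_{\on{tors}}$. This is precisely the step that consumes the extra hypotheses (a)/(b). In case (b) one uses $H^4_{\cont}(k,\Z_{\ell}(2))=0$, furnished by Proposition \ref{cdetcdcont}, to force the edge map $H^4_{\cont}(X,\Z_{\ell}(2))^0\to H^2_{\cont}(k,H^2(X^s,\Z_{\ell}(2)))$ to be injective; in case (a) one uses instead the degree-one zero-cycle (restriction--corestriction) argument of Lemma \ref{deg1}.

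Next I would feed in Theorem \ref{compatible-body}, which requires only (\ref{h4}) and supplies an isomorphism $\on{Ker}(\Theta)\set{\ell}\simeq\on{Ker}(\Phi)\set{\ell}$. Composing the equality from the first step with this isomorphism yields
\[\on{Ker}(\Phi)\set{\ell}\simeq\on{Ker}(\Theta)\set{\ell}=\on{Ker}(\on{cl}^2)\set{\ell},\]
which is exactly the asserted isomorphism. Thus the theorem is a purely formal splicing of Proposition \ref{noyau-cd3} and Theorem \ref{compatible-body}.

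At this level there is essentially no obstacle remaining: all the genuine difficulty has been displaced into Theorem \ref{compatible-body}, namely the comparison of the $K$-theoretic secondary cycle map $\Phi$ with Jannsen's secondary map $\Theta$ on $\ell$-primary torsion, whose proof is deferred to the appendix and relies on the work of Lichtenbaum and Kahn. The one point I would verify carefully is the bookkeeping, i.e. that Proposition \ref{noyau-cd3}(ii) and Theorem \ref{compatible-body} refer to the very same subgroup $\on{Ker}(\Theta)\set{\ell}$ of $CH^2(X)_{\on{tors}}$, so that the two statements are composable with no discrepancy in normalization and the chain above is legitimate.
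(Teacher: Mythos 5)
Your proof is correct and is exactly the paper's own argument: the theorem is deduced by splicing Proposition \ref{noyau-cd3}(ii) (which consumes hypotheses (a)/(b)) with Theorem \ref{compatible-body} (which needs only (\ref{h4})). Your identification of how (a) and (b) are used inside Proposition \ref{noyau-cd3} — via Lemma \ref{deg1} and Proposition \ref{cdetcdcont} respectively — also matches the paper.
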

		
		\begin{proof}
			Cela r\'esulte du th\'eor\`eme \ref{compatible-body}  
			et de la proposition \ref{noyau-cd3}.
		\end{proof} 
		
		\begin{thm}\label{inj-final}
			Supposons que $X$ satisfait (\ref{h4}). Sous l'une ou l'autre  des hypoth\`eses
			
			(a) $X$ poss\`ede un z\'ero-cycle de degr\'e $1$,
			
			(b)  $cd(k) \leq 2$,
			
			\noindent
			on a  $\on{Ker}(\on{cl}^2)\set{\ell}=0$.
			
		\end{thm}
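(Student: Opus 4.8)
Mon plan est de combiner directement les deux résultats structurels déjà établis, en observant que les hypothèses du présent énoncé sont exactement celles du théorème \ref{bloch-ct-raskind-2}, et qu'elles impliquent celles du théorème \ref{inj-presque-final} : en effet, l'hypothèse (a) sur l'existence d'un zéro-cycle de degré $1$ est la même dans les trois énoncés, tandis que l'hypothèse (b), $cd(k) \leq 2$, entraîne $cd(k) \leq 3$. Les deux théorèmes sont donc simultanément applicables sous chacune des hypothèses (a) et (b).

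D'abord, sous ces hypothèses, j'invoquerais le théorème \ref{inj-presque-final} pour disposer de l'isomorphisme
$$\on{Ker}(\Phi)\set{\ell} \simeq \on{Ker}(\on{cl}^2)\set{\ell}.$$
Ceci ramène la nullité cherchée du noyau de l'application cycle de Jannsen sur la torsion $\ell$-primaire à la nullité du noyau de l'application $\Phi$ d'origine $K$-théorique.

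Ensuite, j'appliquerais le théorème \ref{bloch-ct-raskind-2}, dont les hypothèses (a) et (b) coïncident avec celles invoquées ici, pour conclure que $\Phi$ est injective sur $CH^2(X)_{\on{tors}}$, d'où $\on{Ker}(\Phi)\set{\ell}=0$. En reportant cette nullité dans l'isomorphisme ci-dessus, j'obtiendrais $\on{Ker}(\on{cl}^2)\set{\ell}=0$, ce qui est l'énoncé voulu.

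Je n'anticipe aucune difficulté réelle à cette étape : tout le travail a été effectué en amont. D'une part, le théorème \ref{compatible-body}, démontré dans l'appendice, identifie les noyaux des deux applications cycle secondaires $\Phi$ et $\Theta$ ; c'est là le point véritablement délicat. D'autre part, la suite exacte (\ref{origin}) du théorème \ref{bloch-ct-raskind}(d) montre que, dès que le terme $\on{Ker}[H^3(k,\Q/\Z(2)) \to H^3(k(X),\Q/\Z(2))]$ s'annule — ce qu'assurent les hypothèses (a) ou (b) —, l'application $\Phi$ est injective. Le présent théorème n'est donc qu'une mise en forme combinant ces deux ingrédients déjà disponibles, sans nouvel apport technique.
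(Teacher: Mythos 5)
Your proposal is correct and takes essentially the same route as the paper, whose proof simply combines Theorem \ref{compatible-body} with Theorem \ref{bloch-ct-raskind-2}; your detour through Theorem \ref{inj-presque-final} uses the same two ingredients, since that theorem is itself Theorem \ref{compatible-body} combined with Proposition \ref{noyau-cd3}. The only (harmless) difference is that your route invokes the full equality $\on{Ker}(\on{cl}^2)\set{\ell}=\on{Ker}(\Theta)\set{\ell}$ from Proposition \ref{noyau-cd3}, whereas the paper needs only the trivial inclusion $\on{Ker}(\on{cl}^2)\set{\ell}\subseteq\on{Ker}(\Theta)\set{\ell}$ coming from the definition of $\Theta$ as a composite whose first map is $\on{cl}^2$.
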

		
		\begin{proof}
			On combine  le th\'eor\`eme \ref{compatible-body} et le th\'eor\`eme \ref{bloch-ct-raskind-2}.
		\end{proof}
		
		\begin{rmk}\label{rempfister}
			Soit $X \subset \mathbb{P}_k^4$ une quadrique anisotrope d'\'equation
			$$
			x^2-a y^2-b z^2+a b t^2-c w^2=0 .
			$$
			Alors la fl\`eche
			$$
			\operatorname{Ker}[H ^ { 3 } (k, \mathbb{Q} / \mathbb{Z}(2)) \rightarrow H^3(k(X), \mathbb{Q} / \mathbb{Z}(2))] \rightarrow CH^2(X)_{\text{tors}}
			$$
			est un isomorphisme
			$$
			\mathbb{Z} / 2 \simeq CH^2(X)_{\text{tors}} .
			$$
			Ceci r\'esulte du th\'eor\`eme \ref{bloch-ct-raskind}. 
			De fait dans ce cas $\operatorname{Pic}(X)=\operatorname{Pic}(X^s)=\mathbb{Z}$ avec action triviale de $G$. Donc $H^1(k, S)=0$, 	
			et l'application $ \operatorname{Pic}(X) \otimes k^* \to (\operatorname{Pic}(X^s)\otimes k_{s}^*)^G$ est un isomorphisme.	
			Par ailleurs
			$$
			\operatorname{Ker}[H ^ { 3 } (k, \mathbb{Q} / \mathbb{Z}(2)) \rightarrow H^3(k(X), \mathbb{Q} / \mathbb{Z}(2))]=\mathbb{Z} / 2
			$$
			engendr\'e par la classe du cup-produit $(a) \cup(b) \cup(c) \in H^3(k, \mathbb{Z} / 2)$, comme \'etabli par Arason \cite{arason1975cohomologische}. Que l'on ait $CH^2(X)_{\text{tors}}=\mathbb{Z}/2$ pour une telle quadrique avait \'et\'e \'etabli par
			Karpenko \cite[Theorem 5.3]{karpenko1991algebro}. C'est la classe qui est utilis\'ee dans le th\'eor\`eme $1.5$ ($=6.3$) de \cite{SS22}.
			
			Sur un corps de nombres totalement imaginaire, on n'a pas de tel exemple, car toute forme quadratique en 5 variables a un z\'ero non trivial.
			
			Sur le corps $k=\mathbb{R}$ ou $k=\mathbb{Q}$, on trouve donc une quadrique $X \subset \mathbb{P}_{\mathbb{R}}^4$ pour laquelle l'application $\Phi\colon CH^2(X)_{\text{tors}} \rightarrow H^1(k, S)$ a un noyau non nul de 2-torsion.
			
			Par contre, O. Wittenberg (communication personnelle) a montr\'e
			que sur $\R$ comme sur un corps de nombres, par exemple $\Q$, pour toute vari\'et\'e $X$	satisfaisant (\ref{h4}), et en particulier pour la quadrique dans $\P^4_k$ sans $k$-point ci-dessus, l'application
			$CH^2(X)_{\on{tors}} \to H^4_{\on{cont}}(X, \Z_2(2))$	est injective.
			Voir aussi \cite[Remark 6.4]{SS22}.
		\end{rmk}

		\section{Surfaces fibr\'ees en coniques sur la droite projective}\label{fibconiques}

		Dans ce paragraphe, par $k$-surface rationnelle on entend
		une $k$-surface projective,  lisse, g\'eom\'etriquement rationnelle. Une telle surface $X$  satisfait (\ref{h4}).
		On note $S$ le $k$-tore de groupe des cocaract\`eres $\Pic(X^s)$.
		La suite exacte (\ref{origin})	
		$$	 S(k) \rightarrow H^1(k, K_2(k_s(X))
		/ K_2(k_s)) \rightarrow \operatorname{Ker} [ CH^2(X)
		\rightarrow CH^2(X^s )] \rightarrow H^1(k, S)
		$$
		se lit ici
		\begin{equation}
			\label{Bl81}
			S(k) \rightarrow  H^1(k, K_2(k_s(X))
			/ K_2(k_s)) \rightarrow A_{0}(X)  \xrightarrow{\Phi}  H^1(k, S).
		\end{equation}
		Elle avait \'et\'e \'etudi\'ee dans ce cas par Bloch \cite{Blo81} puis par Sansuc et le premier auteur \cite{CTS81}. 
		
		Soit $K=k(\P^1)=k(t)$.
		Soit $X\to \P^1_{k}$ une surface projective et  lisse sur $k$, g\'eom\'etriquement connexe,
		fibr\'ee en coniques sur $\P^1_{k}$,
		de fibre g\'en\'erique donn\'ee   par la conique d'\'equation homog\`ene dans $\P^2_{K}$:
		$$U^2-a(t) V^2 -b(t) W^2=0.$$

		\begin{lemma}[\cite{MN}]\label{max-noether}
			La surface $X$ est rationnelle.
		\end{lemma}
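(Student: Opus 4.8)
The plan is to prove the statement in its intended sense, namely that $X$ is geometrically rational; recall that throughout this section ``rationnelle'' is defined to mean geometrically rational. Since the base field has characteristic zero, the separable closure $k_s=\overline{k}$ is algebraically closed, and it suffices to show that $X^s=X\times_k k_s$ is $k_s$-rational. The induced morphism $X^s\to\P^1_{k_s}$ is again a conic bundle, and its generic fibre is the smooth conic $C\colon U^2-a(t)V^2-b(t)W^2=0$ over the function field $k_s(t)$, where smoothness holds because $a,b\in k(t)^\times$.

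The key point is Tsen's theorem: the field $k_s(t)$, being of transcendence degree one over the algebraically closed field $k_s$, is a $C_1$-field. Hence the nonzero quadratic form $U^2-a(t)V^2-b(t)W^2$ in three variables has a nontrivial zero over $k_s(t)$, so $C$ carries a $k_s(t)$-rational point. A smooth conic with a rational point is isomorphic to the projective line, whence $C\simeq\P^1_{k_s(t)}$.

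It then remains to deduce rationality of the total space from rationality of the generic fibre, and the cleanest route is to compute function fields. The field $k_s(X^s)$ coincides with the function field of the generic fibre $C$ over $k_s(t)$; since $C\simeq\P^1_{k_s(t)}$, we obtain $k_s(X^s)\simeq k_s(t)(u)=k_s(t,u)$, a purely transcendental extension of $k_s$ of transcendence degree two. Therefore $X^s$ is $k_s$-rational, as desired. Equivalently, one may invoke the Noether--Enriques theorem \cite{MN}: a smooth projective surface fibred over $\P^1_{k_s}$ whose generic fibre is a conic with a rational point is birational to $\P^1_{k_s}\times_{k_s}\P^1$.

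I do not anticipate a genuine obstacle here: the argument is the standard combination of Tsen's theorem, used to produce a rational point on the generic fibre after base change to $\overline{k}$, with the elementary passage from a $\P^1$ generic fibre over a rational base to rationality of the total space. The only mild care needed is that the degenerate fibres are irrelevant, rationality being a birational invariant, so that one argues entirely over the generic point of $\P^1_{k_s}$; this is precisely the content of the reference \cite{MN}.
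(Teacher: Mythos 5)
Your proof is correct and is essentially the same as the paper's: both invoke Tsen's theorem to produce a $k_s(t)$-point on the (smooth) generic fibre conic, identify it with $\P^1_{k_s(t)}$, and conclude that the function field of $X^s$ is purely transcendental over $k_s$. The only difference is that you spell out the intermediate steps (base change to $k_s$, the $C_1$ property, the transcendence-degree count) that the paper's three-line "démonstration moderne" leaves implicit.
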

		
		\begin{proof}
			Rappelons la d\'emonstration moderne.  Toute conique  sur le corps 
			$k_{s}(t)$ admet un point rationnel (Tsen). Si la conique est lisse,
			elle est isomorphe \`a $\P^1_{k_{s}(t)}$. Son corps des fonctions, qui
			est celui de $X$, est transcendant pur sur $k_{s}$.	
		\end{proof}
		
		Soit $A=(a,b)$ l'alg\`ebre de quaternions associ\'ee sur le corps $K$ et  soit
		$q$ la forme quadratique diagonale $\langle 1,-a,-b, ab \rangle$ sur $K$. 
		
		Le groupe des normes r\'eduites   $\Nred(A^*) \subset K^*$
		est le groupe des \'el\'ements non nuls  de $K^*$ repr\'esent\'es par la forme quadratique $q$.
		Soit  $K^*_{q} \subset K^*$
		le sous-groupe form\'e des \'el\'ements $f \in K^*$ tels que la
		forme quadratique $f.q \perp -q$ sur le corps $K$ soit l'image d'une forme
		quadratique sur $k$. Soit  $k^*_q \subset k^*$ 
		le groupe $k^* \cap K^*_{q} \subset K^*$.
		Compl\'etant des calculs de S. Bloch \cite[\S 3]{Blo81}, Sansuc et le premier auteur ont \'etabli :
		\begin{thm} \cite[Prop. 2, p. 437]{CTS81}.  Avec les notations ci-dessus,
			on a  $$H^1(k, K_{2}(k_{s}(X))/K_{2}(k_{s})) \oi K^*_{q}/\Nred(A^*),$$
			et le quotient  de  $$S(k) \to H^1(k, K_{2}(k_{s}(X))/K_{2}(k_{s}))$$
			dans la suite (\ref{Bl81}) 
			s'identifie \`a  $K^*_{q}/k^*_{q}\cdot\Nred(A^*)$.
			On a  la suite exacte
			$$ 1 \to K^*_{q}/k^*_{q}\cdot\Nred(A^*) \to A_{0}(X)  {\hskip1mm \buildrel \Phi \over \rightarrow \hskip1mm}H^1(k,S).$$
		\end{thm}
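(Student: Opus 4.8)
The plan is to identify the relevant $K$-theory cohomology group with a concrete group of quadratic-form data and then trace through the exact sequence~(\ref{Bl81}). Since $X$ is geometrically rational, it satisfies~(\ref{h4}), so Theorem~\ref{bloch-ct-raskind} applies and sequence~(\ref{origin}) reads as~(\ref{Bl81}). The heart of the matter is the first asserted isomorphism $H^1(k, K_{2}(k_{s}(X))/K_{2}(k_{s})) \xrightarrow{\sim} K^*_{q}/\Nred(A^*)$; everything else then follows by bookkeeping. By Kahn's theorem (used in the proof of Theorem~\ref{bloch-ct-raskind}) we already know $H^1(k, K_2(k_s(X))/K_2(k_s)) \simeq \operatorname{Ker}[H^3(k, \mathbb{Q}/\mathbb{Z}(2)) \to H^3(k(X), \mathbb{Q}/\mathbb{Z}(2))]$, so one route is to compute this relative $H^3$ directly for a conic fibration. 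However, the cleaner route, and the one I expect matches the original, is to exploit Bloch's explicit analysis of $H^1(k,\mathcal{K}_2)$ for conic bundles.

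First I would recall Bloch's computation in~\cite[\S 3]{Blo81}: for the conic $q = \langle 1, -a, -b, ab\rangle$ over $K = k(t)$, the group $H^1(X^s, \mathcal{K}_2)$ and its Galois-fixed and Galois-cohomology pieces are controlled by the residues of $\mathcal{K}_2$-classes along the fibres, which in turn are governed by norm-residue data for the quaternion algebra $A = (a,b)$. The key input is that elements of the relevant cohomology group correspond to functions $f \in K^*$ for which the transferred form $f\cdot q \perp -q$ descends from $k$, which is exactly the defining condition of $K^*_q$; two such functions give the same class precisely when they differ by a reduced norm from $A^*$ (equivalently, an element represented by $q$). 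This produces the isomorphism with $K^*_q/\Nred(A^*)$. The main obstacle is precisely this step: one must control both the kernel and cokernel of $\Pic(X^s)\otimes k_s^* \to H^1(X^s,\mathcal{K}_2)$ (which Theorem~\ref{bloch-ct-raskind}(b) guarantees are uniquely divisible, hence disappear after taking the finite-exponent Galois cohomology), and then match the resulting description of the image of $S(k)$ with the subgroup $k^*_q \cdot \Nred(A^*)$.

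Next I would compute the image of the map $S(k) \to H^1(k, K_2(k_s(X))/K_2(k_s))$. Under the identification above, an element of $S(k) = (\Pic(X^s)\otimes k_s^*)^G$ maps to a class represented by a function coming from the constant field $k$ rather than genuinely from $K = k(t)$; tracing through the definitions, the image consists exactly of the classes of elements of $k^*_q = k^* \cap K^*_q$ modulo reduced norms. Hence the cokernel of $S(k) \to H^1(k, K_2(k_s(X))/K_2(k_s))$ is $K^*_q/(k^*_q \cdot \Nred(A^*))$, as claimed. Finally, feeding these two identifications into the four-term exact sequence~(\ref{Bl81}) collapses it: the map $S(k) \to H^1(k, K_2(\cdots))$ has the stated cokernel, so the connecting map embeds that cokernel into $A_0(X)$, yielding the short exact sequence
\begin{equation*}
	1 \to K^*_{q}/k^*_{q}\cdot\Nred(A^*) \to A_{0}(X) \xrightarrow{\Phi} H^1(k,S).
\end{equation*}
The only genuinely delicate point is the first isomorphism, which rests on Bloch's explicit residue computation and the unique-divisibility statements of Theorem~\ref{bloch-ct-raskind}; the remaining two steps are formal consequences of the exact sequence together with the $k^*_q$-versus-$K^*_q$ distinction.
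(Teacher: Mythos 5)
The first thing to note is that the paper does not prove this theorem at all: it is quoted directly from \cite[Prop.~2, p.~437]{CTS81}, and the surrounding text is explicit that this result \emph{completes} ("compl\'etant") the calculations of Bloch in \cite[\S 3]{Blo81}. So there is no argument in the paper for your sketch to be compared with; at the level of this paper, the correct "proof" is the citation itself.

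Judged as a standalone proof sketch, your proposal has a genuine gap: it is circular at the only step carrying mathematical content. The two facts you propose to "recall" as key inputs --- that classes in $H^1(k, K_2(k_s(X))/K_2(k_s))$ are represented by functions $f\in K^*$ such that $f\cdot q\perp -q$ descends to $k$, with two such $f$ identified exactly when they differ by an element of $\Nred(A^*)$, and that the image of $S(k)$ consists exactly of the classes of elements of $k^*_q$ --- are precisely the assertions of the theorem, not inputs to it. They are not available in \cite[\S 3]{Blo81} in this form (which is exactly why \cite{CTS81} was needed), and you supply no mechanism for either one: no use of the residue/localization sequences attached to the fibration $X\to \P^1_k$ and to its degenerate fibers, no description of the Galois module $\Pic(X^s)$ of a conic bundle (generated by a section and components of the split singular fibers), and no quadratic-form-theoretic argument explaining why the descent condition on $f\cdot q\perp -q$ characterizes cocycles and why reduced norms are exactly the coboundaries. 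The phrase "tracing through the definitions" for the image of $S(k)$ hides a genuine computation. The portions of your sketch that are correct --- that a geometrically rational surface satisfies (\ref{h4}) so that (\ref{origin}) specializes to (\ref{Bl81}), that the unique divisibility statements of Theorem \ref{bloch-ct-raskind} let one pass between $\Pic(X^s)\otimes k_s^*$ and $H^1(X^s,\mathcal{K}_2)$ in Galois cohomology, and that the final short exact sequence follows formally from (\ref{Bl81}) once the two identifications are granted --- are exactly the formal bookkeeping parts; everything of substance is asserted rather than proven.
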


		En utilisant ce calcul explicite du noyau de $\Phi$, Sansuc et le premier auteur ont construit dans \cite[\S 3, p. 464-465]{CTS83} 
		des exemples de surfaces fibr\'ees en coniques  sur la droite projective pour lesquelles
		le noyau de $\Phi$
		n'est pas nul. D'apr\`es le th\'eor\`eme \ref{inj-final}, ceci ne peut
		se produire si $X$ a un z\'ero-cycle de degr\'e 1 ou si $\cd(k)\leq 2$.

		\medskip
		
		On a donn\'e de tels exemples\footnote{\`A la page 465, ligne -15 de \cite{CTS83}, il faut lire : il n'existe pas $a \in \Q^*$.} sur tout corps $k$ avec
		$\Q(t) \subset k \subset \Q_{3}(\!(t)\!)$, et sur tout corps $k$ avec
		$\C(x,y,z) \subset k \subset \C(\!(x)\!)(\!(y)\!)(\!(z)\!)$.
		En particulier, pour tout $p \geq  5$, congru \`a $-1$ modulo $3$,   il existe des exemples avec le corps 
		$k=\Q(\sqrt{-p})(t)$,  dont la dimension cohomologique est 3.
		Un exemple concret est fourni pour tout tel  $p$ par
		une surface fibr\'ee en conique sur $\P^1_{k}$ de fibre  g\'en\'erique donn\'ee par
		la conique d'\'equation homog\`ene
		$$ X^2+t Y^2 + 3 (s^2-2)(s^2-3) T^2=0,$$
		sur le corps $K= k(\P^1)= \Q(\sqrt{-p})(t)(s)$.
		
		La forme quaternionienne associ\'ee est 
		$$q= \langle 1, -t, -3 (s^2-2)(s^2-3), 3t (s^2-2)(s^2-3)\rangle.$$
		
		L'\'el\'ement $f= 2(s^2-3)$ est dans $K^*_{q}$ mais pas dans
		$k^*_q\cdot D_{K}(q)$.

		En combinant cela avec le th\'eor\`eme \ref{inj-presque-final}, on obtient :
		
		\begin{thm}\label{exemplevoulu}
			Soit $p$ un nombre premier tel que $p\geq 5$ et $p\equiv -1\pmod 3$, soit
			$k=\Q(\sqrt{-p})(t)$ le corps des fractions rationnelles en une variable sur le corps $\Q(\sqrt{-p})$.
			C'est un corps de dimension cohomologique 3.
			Il existe une surface $X/k$ projective, lisse et g\'eom\'etriquement
			rationnelle pour laquelle le noyau de l'application cycle de Jannsen
			$CH^2(X)  \to H^4_{\cont}(X, \Z_{2}(2))$
			contient un \'el\'ement non nul de 2-torsion.
		\end{thm}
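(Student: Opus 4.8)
The plan is to take for $X$ the conic bundle surface over $\P^1_k$ displayed just above, whose generic fibre over $K = k(\P^1) = \Q(\sqrt{-p})(t)(s)$ is the conic $X^2 + tY^2 + 3(s^2-2)(s^2-3)T^2 = 0$, and to transport the nonvanishing of $\on{Ker}(\Phi)$ through Th\'eor\`eme \ref{inj-presque-final}. By Lemme \ref{max-noether} this surface is rational, hence geometrically rational, so it is smooth, projective, geometrically rational and in particular satisfies (\ref{h4}). First I would record that $\cd(k) = 3$: the field $\Q(\sqrt{-p})$ is a totally imaginary number field, hence of cohomological dimension $2$, and $k$ has transcendence degree one over it, so $\cd(k) = 2 + 1 = 3$. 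Thus hypothesis (b) of Th\'eor\`eme \ref{inj-presque-final} is available; hypothesis (a) must in fact fail, since (as the surrounding discussion notes) such an example cannot live over a field on which $X$ carries a zero-cycle of degree $1$.

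Next I would identify $\on{Ker}(\Phi)$. The theorem of Sansuc and the first author \cite[Prop. 2]{CTS81} recalled above yields the exact sequence
$$1 \to K^*_{q}/k^*_{q}\cdot\Nred(A^*) \to A_{0}(X) \xrightarrow{\Phi} H^1(k,S),$$
so that $\on{Ker}(\Phi) = K^*_{q}/k^*_{q}\cdot\Nred(A^*)$, where $q = \langle 1, -t, -3(s^2-2)(s^2-3), 3t(s^2-2)(s^2-3)\rangle$. The explicit computation carried out in \cite{CTS83}, namely that $f = 2(s^2-3)$ lies in $K^*_{q}$ but not in $k^*_{q}\cdot\Nred(A^*)$, shows that the class of $f$ is a nonzero element of $\on{Ker}(\Phi)$. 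Moreover this class is $2$-torsion: since $\Nred(A^*)$ is the set of nonzero values of $q$, every $g \in K^*$ satisfies $g^2 = q(g,0,0,0) \in \Nred(A^*)$, whence $[f]^2 = [f^2] = 0$ in the quotient. Thus $\on{Ker}(\Phi)$ has exponent $2$ and contains a nonzero element; in particular this element lies in $CH^2(X)_{\on{tors}}$ and $\on{Ker}(\Phi)\set{2} \neq 0$.

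Finally I would invoke Th\'eor\`eme \ref{inj-presque-final}(b). Since $X$ satisfies (\ref{h4}) and $\cd(k) \leq 3$, that theorem provides an isomorphism $\on{Ker}(\Phi)\set{2} \simeq \on{Ker}(\on{cl}^2)\set{2}$, where $\on{cl}^2$ is Jannsen's cycle map $CH^2(X) \to H^4_{\cont}(X, \Z_2(2))$. An isomorphism of abelian groups preserves the orders of elements, so the nonzero element of order $2$ in $\on{Ker}(\Phi)\set{2}$ produces a nonzero element of order $2$ in $\on{Ker}(\on{cl}^2)$, which is precisely the asserted nonzero $2$-torsion class in the kernel of Jannsen's cycle map.

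Granting the imported quadratic-form computation, the statement is therefore a formal consequence of Th\'eor\`eme \ref{inj-presque-final} together with the description of $\on{Ker}(\Phi)$. The genuinely hard input is not in this last deduction but upstream: it is the comparison Th\'eor\`eme \ref{compatible-body} (proved in the appendix) identifying $\on{Ker}(\Theta)\set{\ell}$ with $\on{Ker}(\Phi)\set{\ell}$, which is what makes Th\'eor\`eme \ref{inj-presque-final} applicable. The only other delicate point is the verification $f \in K^*_{q} \setminus k^*_{q}\cdot\Nred(A^*)$ from \cite{CTS83}, which is exactly what guarantees that the kernel being transported is nonzero.
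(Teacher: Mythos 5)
Your proposal is correct and follows essentially the same route as the paper: the same conic bundle surface from \cite{CTS83}, the identification $\on{Ker}(\Phi)\simeq K^*_q/k^*_q\cdot\Nred(A^*)$ via \cite[Prop.~2]{CTS81} with the explicit class of $f=2(s^2-3)$, and the transfer to Jannsen's map via Th\'eor\`eme \ref{inj-presque-final}(b) using $\cd(k)=3$. Your added verification that the quotient has exponent $2$ (since $g^2=q(g,0,0,0)\in\Nred(A^*)$), hence that the class lands in $CH^2(X)_{\on{tors}}$, is a point the paper leaves implicit.
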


		Ceci r\'epond \`a la question 1.7(a)  de \cite{SS22}.

		\begin{rmk}
			En prenant le produit de $X$ par un espace projectif $\P^{d-2}_{k}$,  
			on obtient en toute
			dimension $d\geq 2$ des exemples de vari\'et\'es $Y$ projectives, lisses,
			g\'eom\'etri\-que\-ment rationnelles sur le corps $k=\Q(\sqrt{-p})(t)$
			pour lesquelles les applications cycle de Jannsen
			ne sont pas injectives sur $CH^2(Y)[2]$ et pour lesquelles elles ne
			sont pas injectives sur $CH^d(Y)[2]$.
		\end{rmk}

		\begin{rmk}
			Dans le th\'eor\`eme \ref{exemplevoulu}, le fait que l'on a $\cd(k)=3$	joue un double r\^ole.
			D'une part le th\'eor\`eme \ref{inj-final} montre qu'on ne saurait
			avoir un exemple de non injectivit\'e avec $\cd(k)\leq 2$. 
			D'autre part, pour assurer la traduction entre l'exemple
			de \cite{CTS83}  concernant l'application $\Phi$ de Bloch
			et le pr\'esent exemple avec l'application de Jannsen en cohomologie continue,
			on a  utilis\'e $\cd(k)\leq3$, par le biais de
			la proposition \ref{inj-presque-final}(b), qui repose sur  la proposition
			\ref{noyau-cd3}, laquelle repose sur la proposition \ref{cdetcdcont}.
		\end{rmk}
		
		\appendix
		
		\section{La deuxi\`eme application d'Abel-Jacobi sup\'erieure}
		
		L'objectif de cette annexe est la preuve de l'assertion suivante.	
		\begin{thm}\label{compatible}
			Soient $k$ un corps de caract\'eristique z\'ero, $\ell$ un nombre premier et $X$ une $k$-vari\'et\'e projective lisse et g\'eom\'etriquement connexe qui satisfait (\ref{h4}). Alors il existe un isomorphisme $\on{Ker}(\Theta)\set{\ell}\simeq \on{Ker}(\Phi)\set{\ell}$.
		\end{thm}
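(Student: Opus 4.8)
Le plan est de comparer $\Phi$ et $\Theta$ en les r\'ealisant toutes deux comme applications de bord dans une suite spectrale de Hochschild--Serre relative au morphisme structural $X\to\Spec k$: la premi\`ere pour une th\'eorie d'origine $K$-th\'eorique (topologie de Zariski), la seconde pour la cohomologie $\ell$-adique continue (topologie \'etale). Le pont entre les deux serait la cohomologie \'etale de Lichtenbaum du complexe motivique de poids~$2$, que nous noterons $H^i_{L}(-,\Z(2))$, suivant des travaux de Lichtenbaum et de Bruno Kahn: d'un c\^ot\'e ce groupe re\c{c}oit $CH^2(X)=H^4(X,\Z(2))$ et se compare \`a la $K$-th\'eorie par l'interm\'ediaire du faisceau $\mathcal{K}_2$, de l'autre il s'envoie, apr\`es compl\'etion $\ell$-adique, dans $H^4_{\cont}(X,\Z_\ell(2))$, de sorte que l'application de Jannsen $\on{cl}^2$ se factorise \`a travers lui.

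Nous commencerions par le c\^ot\'e de $\Phi$. Le th\'eor\`eme \ref{bloch-ct-raskind} et le th\'eor\`eme de Kahn $H^1(G,K_2(k_s(X))/K_2(k_s))\simeq N$, o\`u l'on pose $N:=\on{Ker}[H^3(k,\Q/\Z(2))\to H^3(k(X),\Q/\Z(2))]$, identifient $\on{Ker}(\Phi)$ au conoyau de l'application $S(k)\to N$. Comme $N$ est d'exposant fini, le passage aux composantes $\ell$-primaires donne un isomorphisme $\on{Ker}(\Phi)\set{\ell}\simeq\big(N/\on{im}(S(k)\to N)\big)\set{\ell}$. L'objectif est de retrouver ce m\^eme groupe du c\^ot\'e de $\Theta$.

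Du c\^ot\'e de $\Theta$, nous utiliserions la comparaison de Beilinson--Lichtenbaum en poids~$2$ (cons\'equence du th\'eor\`eme de Merkurjev--Suslin), \`a savoir l'isomorphisme $\Z(2)\otimes^{\mathbb{L}}\Z/\ell^n\simeq\mu_{\ell^n}^{\otimes 2}$ pour la topologie \'etale, pour relier $H^\bullet_{L}(-,\Z(2))$ \`a la cohomologie $\ell$-adique et faire appara\^itre $\Theta$ comme r\'ealisation $\ell$-adique du bord de Hochschild--Serre de la th\'eorie de Lichtenbaum. Sous l'hypoth\`ese (\ref{h4}) on a $CH^2(X^s)\set{\ell}=0$ et $H^3(X^s,\Q/\Z(2))=0$, ce qui fait dispara\^itre les classes g\'eom\'etriques parasites. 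La subtilit\'e est que le bord $K$-th\'eorique vit dans un $H^1(k,-)$ (la cible de $\Phi$ \'etant $H^1(k,S)=H^1(k,\Pic(X^s)\otimes k_s^*)$), tandis que celui de $\Theta$ vit dans un $H^2_{\cont}(k,-)$: le passage de l'un \`a l'autre est pr\'ecis\'ement la r\'ealisation $\ell$-adique, qui sur la torsion $\ell$-primaire se traduit par le bord de Kummer $H^1(k,S)\set{\ell}\hookrightarrow H^2_{\cont}(k,H^2(X^s,\Z_\ell(2)))$ de la proposition \ref{MLH2}(b), compte tenu des isomorphismes $\Pic(X^s)\otimes\Z_\ell(1)\oi H^2(X^s,\Z_\ell(2))$ et $\Pic(X^s)\otimes k_s^*\to H^1(X^s,\mathcal{K}_2)$ (\`a noyau et conoyau uniquement divisibles, th\'eor\`eme \ref{bloch-ct-raskind}(b)).

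Le point r\'eellement d\'elicat --- et la raison pour laquelle l'\'enonc\'e se borne \`a un isomorphisme de noyaux plut\^ot qu'\`a une \'egalit\'e d'applications (voir la remarque qui suit le th\'eor\`eme \ref{compatible-body}) --- est que l'on ne sait pas identifier $\Theta_n$ avec la compos\'ee de $\Phi$ et de ce bord de Kummer. Il faut donc \'etablir l'isomorphisme des noyaux sans identifier les applications elles-m\^emes, en appariant les deux bords de Hochschild--Serre \`a travers l'objet de Lichtenbaum et en contr\^olant les deux sources d'impr\'ecision: d'une part les noyaux et conoyaux uniquement divisibles du th\'eor\`eme \ref{bloch-ct-raskind}(a)--(b), qui s'\'evanouissent sur la torsion $\ell$-primaire; d'autre part les termes ${\lim}^1$ et la condition de Mittag--Leffler (propositions \ref{NSW} et \ref{enoncereseaux}), qui s'annulent sous (\ref{h4}) et autorisent le passage des coefficients finis $\Z/\ell^n$ aux coefficients $\Z_\ell$ (proposition \ref{Zell-or-elln}). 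C'est cette articulation, plut\^ot qu'un calcul explicite, qui constitue l'essentiel de la difficult\'e annonc\'ee.
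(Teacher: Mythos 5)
Your outline faithfully reproduces the strategy of the paper: Lichtenbaum's weight-two complex as the bridge between the $K$-theoretic map $\Phi$ and Jannsen's map, a comparison of the two Hochschild--Serre boundaries, unique divisibility (th\'eor\`eme \ref{bloch-ct-raskind}(a)--(b)) killing the discrepancies on $\ell$-primary torsion, and the Mittag--Leffler/${\lim}^1$ control (propositions \ref{enoncereseaux}, \ref{MLH2}, \ref{Zell-or-elln}) for the passage from $\mu_{\ell^n}^{\otimes 2}$ to $\Z_{\ell}(2)$ coefficients. You also correctly isolate the obstruction: one does not know that $\Theta_n$ coincides with $\Phi$ followed by the Kummer boundary, so the kernels must be compared without identifying the maps. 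But the proposal stops exactly where the proof begins: your closing sentence, which declares that ``cette articulation'' constitutes the essential difficulty, concedes the central point instead of resolving it. What you have written is the paper's plan, not a proof of Theorem \ref{compatible}.

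Concretely, two pillars are missing, and they are where all the work lies. \emph{First}, the factorization of $\Theta_n$, up to sign, as $\delta_n\circ f\circ\beta$, where $\beta\colon CH^2(X)_0\oi H^1(k,\H^3(k_s(X)/X^s,\Gamma(2)))$, $f$ is induced by $\H^3(k_s(X)/X^s,\Gamma(2))\to \H^3(X^s,\Gamma(2))$, and $\delta_n$ is the boundary of the sequence (\ref{exact-because-divisible}). This requires (i) Lichtenbaum's cycle map $\on{cl}_{\Gamma}$ and Kahn's isomorphism $CH^2(X)\oi \H^4(k(X)/X,\Gamma(2))$ (diagram (\ref{top-left})), (ii) the compatibility of $\on{cl}_n$ with $\on{cl}_{\Gamma}$ via the triangle (\ref{gamma-triangle}) (diagram (\ref{bottom-left})), and above all (iii) the statement that the edge maps $\varphi^1,\varphi^2$ of the Hochschild--Serre spectral sequence intertwine the connecting morphism of an exact triangle in $D^{\geq 0}(X_{\et})$ with the Galois-cohomological boundary of the induced short exact sequence of $G$-modules: this is Proposition \ref{bord} and Remark \ref{translation}, whose proof is an explicit Cartan--Eilenberg/horseshoe-lemma computation occupying most of the appendix. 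Without (iii), ``apparier les deux bords de Hochschild--Serre'' has no mathematical content. \emph{Second}, the $G$-equivariant identification of the Gersten--Quillen sequence (\ref{gq-1}) with the Lichtenbaum sequence (\ref{gamma-2-galois}) --- Proposition \ref{ct-raskind-lichtenbaum}, which needs stalkwise comparisons over strict henselizations, Quillen's Gersten resolution, and Kahn's results on $K_3^{\on{ind}}$ --- combined with Lemma \ref{phi-def}: this is what converts $\on{Ker}(f)\set{\ell}$ into $\on{Ker}(\Phi)\set{\ell}$. Note finally that your reduction of $\on{Ker}(\Phi)\set{\ell}$ to the $\ell$-part of the cokernel of $S(k)\to N$, while correct, is never used afterwards (the paper works instead with $\on{Ker}[H^1(k,Z)\to H^1(k,H^1(X^s,\mc{K}_2))]$), and the precise mechanism making the kernel comparison work --- injectivity of $\delta_n$ on $\ell$-primary torsion for $n$ large, obtained from the snake lemma applied to (\ref{uniq-divisible}) together with Proposition \ref{enoncereseaux} --- is only gestured at through Proposition \ref{MLH2}(b).
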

		
		On fixe quelques notations pour cette annexe. Soient $k$ un corps, $G$ le groupe de Galois absolu de $k$,  $X$ une $k$-vari\'et\'e et $p\colon X\to \Spec(k)$ le morphisme structural. On \'ecrit $X_{\et}$ pour le petit site \'etale de $X$. 
		
		Si $F$ est un faisceau ab\'elien sur $X_{\et}$, $p_*F$ est le faisceau associ\'e au $G$-module continu $F(X^s)\coloneqq \varinjlim_{L/k} F(X_L)$, o\`u $L/k$ parcourt l'ensemble des sous-extensions finies de $k_s/k$, et on a $\Gamma(\Spec(k)_{\et}, p_*F)=\Gamma(X_{\et},F)=F(X^s)^G$. Si $A$ est un complexe de faisceaux ab\'eliens sur $X_{\et}$, on notera par $\H^*(X,A)$ l'hypercohomologie de $A$. On notera par $F^s$ (resp.  $A^s$) l'image r\'eciproque de $F$ (resp. $A$) le long du morphisme de projection $X^s\to X$.
		
		\subsection{Homomorphismes dans la suite spectrale de Hochschild-Serre}
		Soit $A$ un complexe de faisceaux ab\'eliens sur $X_{\et}$. On suppose que $A$ appartient \`a $D^{\geq 0}(X_{\et})$, c'est-\`a-dire $\H^i(X,A)=0$ pour tout $i<0$. La suite spectrale de Grothendieck associ\'ee \`a la composition $\Gamma(k,-)\circ p_* = \Gamma(X,-)$ 
		est la suite spectrale de Hochschild-Serre
		\begin{equation}\label{hochschild-serre-hyper}
			E_2^{i,j}\coloneqq H^i(k,\H^j(X^s,A^s))\Rightarrow \H^{i+j}(X,A).
		\end{equation}	
		La suite spectrale (\ref{hochschild-serre-hyper}) est fonctorielle en $A$. On note $F^q\H^*(X,A)$ la filtration d\'ecroissante induite sur $\H^*(X,A)$. Pour tout $r\geq 1$ on a des homomorphismes
		\[\varphi^1\colon F^1\H^r(X,A)\to H^1(k,\H^{r-1}(X^s,A^s)).\]
		Si $\H^{r-1}(X^s,A^s)=0$, alors la diff\'erentielle $d_2^{0,r-1}$ est nulle et on obtient un homomorphisme
		\[\varphi^2\colon F^2\H^r(X,A)\to H^2(k,\H^{r-2}(X^s,A^s)).\]
		On utilisera la proposition suivante (plus pr\'ecis\'ement, la remarque \ref{translation} qui en est cons\'equence) pour construire le carr\'e commutatif (\ref{bottom-right}).
		\begin{prop}\label{bord}
			Soit 
			\begin{equation}\label{abc}
				A\to B\to C\to A[1]
			\end{equation} 
			un triangle exact dans $D^{\geq 0}(X_{\et})$ et soit $\partial\colon \H^*(X,C)\to \H^{*+1}(X,A)$ l'homomorphisme de bord induit par (\ref{abc}). 	Soit $j\geq 2$ un entier. Supposons que l'on a $\H^{j-1}(X^s,A^s)=0$  et que l'application $\H^{j-3}(X^s,B^s)\to \H^{j-3}(X^s,C^s)$ est surjective, ce qui donne une suite exacte courte de $G$-modules
			\begin{equation}\label{abc-2}0 \to \H^{j-2}(X^s,A^s)\to \H^{j-2}(X^s,B^s)\to \H^{j-2}(X^s,C^s) \to 0.
			\end{equation}
			Alors $\partial(\H^{j-1}(X,C))\subset F^2\H^j(X,A)$ et on a un diagramme commutatif 
			\[
			\begin{tikzcd}
				F^1\H^{j-1}(X,C) \arrow[r,"\varphi^1"] \arrow[d,"\partial"]   & H^1(k, \H^{j-2}(X^s,C^s)) \arrow[d,"\delta"]  \\
				F^2\H^j(X,A) \arrow[r,"\varphi^2"] & H^2(k,\H^{j-2}(X^s,A^s))
			\end{tikzcd}
			\]
			o\`u $\delta$ est l'homomorphisme de bord associ\'e \`a (\ref{abc-2}).
		\end{prop}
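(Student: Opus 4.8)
\emph{Reduction and setup.} The plan is to push everything down to $\Spec(k)$ and argue with the canonical truncation functors $\tau_{\leq m},\tau_{\geq m}$. Writing $\widetilde A=Rp_*A$, $\widetilde B=Rp_*B$, $\widetilde C=Rp_*C$, these lie in $D^{\geq 0}$ of $G$-modules, one has $H^q(\widetilde A)=\H^q(X^s,A^s)$ (and likewise for $B,C$), $\H^n(X,A)=\H^n(k,\widetilde A)$, and (\ref{abc}) becomes a triangle $\widetilde A\to\widetilde B\to\widetilde C\xrightarrow{w}\widetilde A[1]$ with $\partial=w_*$. Under this identification (\ref{hochschild-serre-hyper}) is the second hypercohomology spectral sequence of $R\Gamma(G,-)$, and one checks in the usual way that its filtration is computed by truncation: for $K\in D^{\geq 0}$,
$$F^p\H^n(k,K)=\on{Ker}\big(\H^n(k,K)\to\H^n(k,\tau_{\geq n-p+1}K)\big),$$
the map being induced by $K\to\tau_{\geq n-p+1}K$; in particular $F^1$ is the kernel of the edge map $\H^n(k,K)\to H^0(k,H^n(K))$.

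\emph{The containment.} Because $\H^{j-1}(X^s,A^s)=H^{j-1}(\widetilde A)=0$ we get $E_2^{1,j-1}=0$, hence $F^1\H^j(X,A)=F^2\H^j(X,A)$; moreover $\tau_{\geq j-1}\widetilde A\simeq\tau_{\geq j}\widetilde A$, so $F^2\H^j(X,A)$ is exactly the kernel of the edge map $\H^j(k,\widetilde A)\to H^0(k,H^j(\widetilde A))$. By naturality of the edge maps applied to $w$, the composite $\H^{j-1}(X,C)\xrightarrow{\partial}\H^j(X,A)\to H^0(k,H^j(\widetilde A))$ factors as the edge map $\H^{j-1}(X,C)\to H^0(k,H^{j-1}(\widetilde C))$ followed by the map induced by the cohomology boundary $H^{j-1}(\widetilde C)\to H^j(\widetilde A)$. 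For $x\in F^1\H^{j-1}(X,C)$ the first arrow kills $x$, whence $\partial(x)$ lies in $F^1\H^j(X,A)=F^2\H^j(X,A)$; this is the containment the square requires, and the same edge computation yields the containment for all of $\H^{j-1}(X,C)$.

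\emph{Reducing the square to one morphism identity.} One records the truncation description of $\varphi^1,\varphi^2$. For $x\in F^1\H^{j-1}(X,C)$, lift $x$ along $\H^{j-1}(k,\tau_{\leq j-2}\widetilde C)\to\H^{j-1}(k,\widetilde C)$ to $\tilde x$; then $\varphi^1(x)=\pi_{\widetilde C,*}(\tilde x)$, where $\pi_{\widetilde C}\colon\tau_{\leq j-2}\widetilde C\to M[-(j-2)]$ ($M:=H^{j-2}(\widetilde C)$) is the projection onto top cohomology and $\H^{j-1}(k,M[-(j-2)])=H^1(k,M)$. Since $H^{j-1}(\widetilde A)=0$ the analogous lift for $\widetilde A$ is unique, and $\varphi^2$ is computed the same way through $\pi_{\widetilde A}\colon\tau_{\leq j-2}\widetilde A\to M'[-(j-2)]$, $M':=H^{j-2}(\widetilde A)$. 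Applying $\tau_{\leq j-2}$ to $w$ and using $\tau_{\leq j-2}(\widetilde A[1])=(\tau_{\leq j-2}\widetilde A)[1]$ (again $H^{j-1}(\widetilde A)=0$) yields $g:=\tau_{\leq j-2}w\colon\tau_{\leq j-2}\widetilde C\to(\tau_{\leq j-2}\widetilde A)[1]$, compatible with $w$ via the inclusions $\tau_{\leq j-2}(-)\hookrightarrow(-)$; hence $g_*(\tilde x)$ is the unique lift of $\partial(x)$ and $\varphi^2(\partial x)=\H^{j-1}(\pi_{\widetilde A}[1]\circ g)(\tilde x)$. The surjectivity hypothesis forces the degree $j-3$ cohomology boundary $H^{j-3}(\widetilde C)\to H^{j-2}(\widetilde A)$ to vanish; a small $\on{Hom}$-computation then shows that $g$ carries $\tau_{\leq j-3}\widetilde C$ into the fibre $(\tau_{\leq j-3}\widetilde A)[1]$, so $g$ is a morphism of truncation triangles and induces a unique $\beta\colon M[-(j-2)]\to M'[-(j-3)]$ with $\pi_{\widetilde A}[1]\circ g=\beta\circ\pi_{\widetilde C}$. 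As $\delta=\H^{j-1}(c[-(j-2)])$ for the Yoneda morphism $c\colon M\to M'[1]$ of (\ref{abc-2}), commutativity of the square reduces to the single identity $\beta=c[-(j-2)]$.

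\emph{The main obstacle.} The delicate point is exactly this identity: truncating the connecting morphism $w$ must recover the extension class of the induced sequence (\ref{abc-2}) of cohomology modules. The difficulty is that $\tau_{\leq j-2}$ does not preserve triangles, so $\beta$ is not produced by naive functoriality and one must bring in the middle term $\widetilde B$. I would establish $\beta=c[-(j-2)]$ by applying the octahedral axiom to $\widetilde B\xrightarrow{v}\widetilde C\xrightarrow{w}\widetilde A[1]$ together with the truncation triangles of $\widetilde A,\widetilde B,\widetilde C$, comparing the morphism of truncation triangles that carries $\beta$ with the distinguished triangle $M'[-(j-2)]\to H^{j-2}(\widetilde B)[-(j-2)]\to M[-(j-2)]\xrightarrow{c[-(j-2)]}M'[-(j-3)]$ attached to (\ref{abc-2}); the two inputs $H^{j-1}(\widetilde A)=0$ and the vanishing of the degree $j-3$ boundary are precisely what makes this comparison possible. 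Equivalently one may check $\beta=c[-(j-2)]$ by an explicit cocycle computation of $w_*$ in a termwise split short exact sequence $0\to\widetilde A\to\widetilde B\to\widetilde C\to 0$ representing the triangle, tracking the canonical filtration through the connecting zig-zag. Granting $\beta=c[-(j-2)]$, the chase of the previous paragraph gives $\varphi^2(\partial x)=\H^{j-1}(\beta)(\varphi^1(x))=\delta(\varphi^1(x))$, which is the asserted commutativity.
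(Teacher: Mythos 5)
Your overall strategy --- push everything down to $\Spec(k)$ (writing $\widetilde A=Rp_*A$, etc.), describe the Hochschild--Serre filtration and the maps $\varphi^1,\varphi^2$ by canonical truncations, and reduce the commutativity of the square to the single identity $\beta=c[-(j-2)]$ in $\on{Hom}_{D(G)}\bigl(M[-(j-2)],M'[-(j-3)]\bigr)=\on{Ext}^1_G(M,M')$, where $M=\H^{j-2}(X^s,C^s)$ and $M'=\H^{j-2}(X^s,A^s)$ --- is correct as far as it goes, and it is organized genuinely differently from the paper's proof, which never isolates such an identity but instead builds Cartan--Eilenberg resolutions via the Horseshoe Lemma and verifies commutativity by an explicit element chase in double complexes. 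The problem is that your argument stops exactly where the content of Proposition \ref{bord} lies: the identity $\beta=c[-(j-2)]$ is never established. You yourself call it ``the main obstacle'' and then write ``I would establish $\beta=c[-(j-2)]$ by applying the octahedral axiom\dots'' and ``equivalently one may check\dots by an explicit cocycle computation''; these are declarations of intent, not arguments. The second route is precisely the computation that occupies the whole of the paper's proof (their diagrams (\ref{cartan-eilenberg1})--(\ref{vers-3})), so invoking it without performing it leaves the proposition unproved; the first route is left without any indication of which octahedron is taken, which morphisms of triangles are compared, or where the two hypotheses enter. This is a genuine gap, and it is not a routine one: the compatibility between the derived-category connecting morphism and the Yoneda class of the induced extension of cohomology modules is the very point the authors describe as ``\'etonnamment d\'elicate''.

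For the record, your truncation route can be completed, but the missing steps are substantive and are exactly where the hypotheses get used: (i) show that $\tau_{\leq j-2}\widetilde A\to\tau_{\leq j-2}\widetilde B\to\tau_{\leq j-2}\widetilde C\xrightarrow{g}(\tau_{\leq j-2}\widetilde A)[1]$ is itself a distinguished triangle, by computing the cone of $\tau_{\leq j-2}\widetilde B\to\tau_{\leq j-2}\widetilde C$ and using both $\H^{j-1}(X^s,A^s)=0$ and the vanishing of the boundary $\H^{j-3}(X^s,C^s)\to\H^{j-2}(X^s,A^s)$; (ii) produce a TR3 fill-in $h\colon \tau_{\leq j-2}\widetilde C\to M[-(j-2)]$ from this triangle to the distinguished triangle $M'[-(j-2)]\to \H^{j-2}(X^s,B^s)[-(j-2)]\to M[-(j-2)]\xrightarrow{c}M'[-(j-3)]$ attached to (\ref{abc-2}), using the naturality square formed by $\pi_{\widetilde A},\pi_{\widetilde B}$; (iii) use the $t$-structure vanishing $\on{Hom}(D^{\leq m},D^{\geq m+1})=0$ twice, first to force $h=\pi_{\widetilde C}$ and then to conclude $\beta=c$ from $(\beta-c)\circ\pi_{\widetilde C}=0$ (up to the usual sign ambiguities). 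None of this appears in your text. A secondary error: your claim that ``the same edge computation yields the containment for all of $\H^{j-1}(X,C)$'' is a non sequitur. For $x\notin F^1\H^{j-1}(X,C)$ the image of $\partial(x)$ in $H^0(k,\H^j(X^s,A^s))$ equals $H^0(k,\partial^{j-1})$ applied to the image of $x$, where $\partial^{j-1}\colon \H^{j-1}(X^s,C^s)\to\H^j(X^s,A^s)$ is the cohomological boundary, and nothing in the hypotheses makes this vanish; your computation only yields $\partial(F^1\H^{j-1}(X,C))\subset F^2\H^j(X,A)$, which is what the displayed square actually requires (the paper's own one-line justification of this containment likewise proves no more than the $F^1$-version).
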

		
		\begin{proof}	
			Si $D=(D^{i,j})_{i,j\geq 0}$ est un complexe double, o\`u $i$ d\'enote la coordonn\'ee  horizontale et $j$ la coordonn\'ee
			verticale, on notera $\on{Tot}(D)$ le complexe total associ\'e \`a $D$, par $d_0$ (resp. $d_1$) la diff\'erentielle verticale (resp. horizontale) de la deuxi\`eme suite spectrale associ\'ee \`a $D$, et par $d$ la diff\'erentielle totale. On \'ecrira $H_{d_0}(D)$ pour le complexe double obtenu en prenant la cohomologie de $D$ par rapport \`a $d_0$ (c'est la page $E_1$ de la deuxi\`eme suite spectrale associ\'ee \`a $D$). Si $x\in \on{Tot}(D)^j$ satisfait $d(x)=0$, on notera $\cl{x}$ sa classe dans $H^j(\on{Tot}(D))$. Si $x^{i,j}\in D^{i,j}$ satisfait $d_0(x^{i,j})=0$ et $d_1(x^{i,j})\in \on{Im}(d_0)$, on \'ecrira $[x^{i,j}]$ pour sa classe modulo $\on{Im}(d_0)+\on{Im}(d_1)$, c'est-\`a-dire dans la page $E_2$ de la deuxi\`eme suite spectrale associ\'ee \`a $D$.
			
			D'apr\`es le lemme du fer \`a cheval (Horseshoe Lemma en anglais, le dual de \cite[Lemma 2.2.4]{weibel1994introduction}), on peut construire un diagramme commutatif de complexes doubles de $G$-modules 
			\begin{equation}\label{cartan-eilenberg1}
				\begin{tikzcd}
					0 \arrow[r] & A' \arrow[r]\arrow[d] & B' \arrow[r]\arrow[d] & C' \arrow[r]\arrow[d] & 0 \\
					0 \arrow[r] & I \arrow[r,"\iota"] & J \arrow[r,"\pi"] & K \arrow[r] & 0,	
				\end{tikzcd}
			\end{equation}		
			o\`u la ligne du haut est une suite exacte courte de complexes de $G$-modules (vus comme complexes doubles concentr\'es dans la ligne z\'ero) qui repr\'esente le triangle \[Rp_*A\to Rp_*B\to Rp_*C\to Rp_*A[1],\] les fl\`eches verticales sont des
			r\'esolutions de Cartan-Eilenberg, et pour tout $q\geq 0$ la suite exacte de complexes de $G$-modules
			\[0\to I^{*,q}\xrightarrow{\iota^{*,q}} J^{*,q}\xrightarrow{\pi^{*,q}} K^{*,q}\to 0\]
			est scind\'ee.  En particulier, la suite du bas reste exacte apr\`es passage aux $G$-invariants.

			On peut visualiser la r\'esolution $A'\to I$ comme suit:
			\begin{equation}\label{cartan-eilenberg-explicit}
				\begin{tikzcd}
					\vdots & \vdots & \vdots & \vdots	 \\
					(A')^2 \arrow[r] \arrow[u] & I^{0,2} \arrow[u,"d_0"]\arrow[r,"d_1"] & I^{1,2} \arrow[u,"d_0"]\arrow[r,"d_1"] & I^{2,2}\arrow[r,"d_1"]\arrow[u,"d_0"] & \cdots \\
					(A')^1 \arrow[r]\arrow[u] & I^{0,1} \arrow[r,"d_1"]\arrow[u,"d_0"] & I^{1,1}\arrow[r]\arrow[u,"d_0"] & I^{2,1}\arrow[r,"d_1"]\arrow[u,"d_0"] & \cdots\\ 
					(A')^0 \arrow[r]\arrow[u] & I^{0,0} \arrow[r,"d_1"]\arrow[u,"d_0"] & I^{1,0}\arrow[r,"d_1"]\arrow[u,"d_0"] & I^{2,0}\arrow[r,"d_1"]\arrow[u,"d_0"] & \cdots.
				\end{tikzcd}
			\end{equation}
			Une description similaire s'applique \`a $B'\to J$ et $C'\to K$.
			
			Par d\'efinition, la suite spectrale de Hochschild-Serre pour $A$ (resp. $B$, $C$) est la deuxi\`eme suite spectrale du complexe double $I^G$ (resp. $J^G$, $K^G$) et les homomorphismes naturels entre ces suites spectrales sont induits par $\iota$ et $\pi$. La suite spectrale de Hochschild-Serre pour $A$ est donc obtenue par passage aux $G$-invariants dans (\ref{cartan-eilenberg-explicit}) ($A'$ exclu). Sa page $E_1$ est donn\'ee par $H_{d_0}(I^G)$, et sa page $E_2$ est obtenue en prenant la cohomologie de $E_1$ par rapport 
 \`a la diff\'erentielle induite par $d_1$. Les suites spectrales de Hochschild-Serre de $B$ et $C$ admettent des descriptions analogues.

			Le diagramme (\ref{cartan-eilenberg1}) induit un diagramme commutatif de $G$-modules
			\begin{equation}\label{cartan-eilenberg2}
				\begin{tikzcd}
					0 \arrow[r] & \H^{j-2}(X^s,A^s)\arrow[r] \arrow[d]  & \H^{j-2}(X^s,B^s)\arrow[r]\arrow[d] & \H^{j-2}(X^s,C^s) \arrow[r]\arrow[d] & 0\\
					0 \arrow[r] & H_{d_0}(I)^{*,j-2}\arrow[r,"H_{d_0}(\iota)"] & H_{d_0}(J)^{*,j-2}\arrow[r,"H_{d_0}(\pi)"] & H_{d_0}(K)^{*,j-2} \arrow[r] & 0,
				\end{tikzcd}
			\end{equation}
			o\`u les fl\`eches verticales sont des r\'esolutions injectives. Ici, la suite exacte sup\'erieure est (\ref{abc-2}): son exactitude suit de la nullit\'e du groupe $\H^{j-1}(X^s,A^s)$ et de la surjectivit\'e de l'application $\H^{j-3}(X^s,B^s)\to \H^{j-3}(X^s,C^s)$. La suite exacte de complexes de $G$-modules inf\'erieure est scind\'ee ligne par ligne et reste donc exacte apr\`es passage aux $G$-invariants.

			Le fait que $\partial(\H^{j-1}(X,C))\subset F^2\H^j(X,A)$ suit de la fonctorialit\'e de la suite spectrale de Hochschild-Serre pour $A$ et $C[1]$ par rapport au morphisme $A\to C[1]$ et du fait que $F^1\H^j(X,A)=F^2\H^j(X,A)$. 
			
			Tout \'el\'ement de $F^1\H^{j-1}(X,C)$ est la classe $\cl{c}$ d'un \'el\'ement $$c=(c^{i,j-i-1})_{i=0}^{j-1}\in \on{Tot}(K^G)^{j-1}$$ tel que $d(c)=0$ et $c^{0,j-1}=0$. On  en d\'eduit  $d_0(c^{1,j-2})=d_1(c^{0,j-1})=d_1(0)=0$. On a aussi $d_1(c^{1,j-2})=d_0(c^{2,j-3})\in \on{Im}(d_0)$. Par d\'efinition $\varphi^1(\cl{c})=[c^{1,j-2}]$. 
			
			La construction de $\partial(\cl{c})$ se fait \`a l'aide du diagramme suivant induit par (\ref{cartan-eilenberg1}): 
			\[
			\begin{tikzcd}
				& \on{Tot}(J^G)^{j-1}\arrow[r,"\pi"] \arrow[d,"d"] & \on{Tot}(K^G)^{j-1} \\
				\on{Tot}(I^G)^j \arrow[r,"\iota"] & \on{Tot}(J^G)^j
			\end{tikzcd}
			\]
			Plus pr\'ecis\'ement, il existe $b=(b^{i,j-i-1})_{i=0}^{j-1}\in \on{Tot}(J^G)^{j-1}$ tel que $\pi(b)=c$ et $b^{0,j-1}=0$. Alors $\pi(d(b))=d(c)=0$, donc il existe un unique $a=(a^{i,j-i})_{i=0}^{j}\in \on{Tot}(I^G)^j$ tel que $\iota(a)=b$. On a $d(a)=0$ et par d\'efinition $\partial(\cl{c})=\cl{a}$.
			
			Comme $b^{0,j-1}=0$ et $\iota$ est injectif, on a $a^{0,j}=0$. Comme $\cl{a}\in F^2\H^j(X,A)$, il existe $a^{0,j-1}\in (I^G)^{0,j-1}$ et $a^{1,j-2}\in (I^G)^{1,j-2}$ tels que $d_0(a^{0,j-1})=0$ et $a^{1,j-1}=d_1(a^{0,j-1})+d_0(a^{1,j-2})$. Si on remplace $b$ par $b-\iota(a^{0,j-1}+a^{1,j-2})$ et $a$ par $a-d(a^{0,j-1}+a^{1,j-2})$, on a encore $\pi(b)=c$, $\iota(a)=d(b)$ et $\partial(\cl{c})=\cl{a}$, et on a aussi $a^{0,j}=a^{1,j-1}=0$. En particulier, $d_0(a^{2,j-2})=0$. En outre $d_1(a^{2,j-2})=-d_0(a^{3,j-3})\in \on{Im}(d_0)$. Par d\'efinition $\varphi^2(\cl{a})=[a^{2,j-2}]$, donc
			\begin{equation}\label{vers-1} \varphi^2(\partial(\cl{c}))=\varphi^2(\cl{a})=[a^{2,j-2}].
			\end{equation}
			
			L'\'el\'ement $\delta(\varphi^1(\cl{c}))=\delta([c^{1,j-2}])$ se construit \`a partir du diagramme suivant induit par (\ref{cartan-eilenberg2}):
			\[
			\begin{tikzcd}
				& (H_{d_0}(J)^{1,j-2})^G\arrow[r,"H_{d_0}(\pi)"]\arrow[d,"H_{d_0}(d_1)"] & (H_{d_0}(K)^{1,j-2})^G \\
				(H_{d_0}(I)^{2,j-2})^G\arrow[r,"H_{d_0}(\iota)"] & (H_{d_0}(J)^{2,j-2})^G & 
			\end{tikzcd}
			\]
			Plus pr\'ecis\'ement, il existe $\beta^{1,j-2}\in J^{1,j-2}$ tel que $d_0(\beta^{1,j-2})=0$, la classe de $\beta^{1,j-2}$ dans $H_{d_0}(J)^{1,j-2}$ est $G$-invariante et $\pi(\beta^{1,j-2})=c^{1,j-2}+d_0(\gamma^{1,j-3})$ avec $\gamma^{1,j-3}\in K^{1,j-3}$. Comme $\pi$ est surjectif, on peut trouver $\beta^{1,j-3}\in J^{1,j-3}$ tel que $\pi(\beta^{1,j-3})=\gamma^{1,j-3}$. Si on remplace $\beta^{1,j-2}$ par $\beta^{1,j-2}-d_0(\beta^{1,j-3})$, alors $d_0(\beta^{1,j-2})=0$, la classe de $\beta^{1,j-2}$ dans $H_{d_0}(J)^{1,j-2}$ est encore $G$-invariante et $\pi(\beta^{1,j-2})=c^{1,j-2}$. Il existe un unique $\alpha^{2,j-2}\in I^{2,j-2}$ tel que $\iota(\alpha^{2,j-2})=d_1(\beta^{1,j-2})$. Comme $d_0(\beta^{1,j-2})=0$ et la classe de $\beta^{1,j-2}$ dans $H_{d_0}(I)^{2,j-2}$ est $G$-invariante, on a $d_0(\alpha^{2,j-2})=0$ et la classe de $\alpha^{2,j-2}$ dans $H_{d_0}(I)^{2,j-2}$ est $G$-invariante. Par d\'efinition $\delta([c^{1,j-2}])=[\alpha^{2,j-2}]$, donc 
			\begin{equation}\label{vers-2}
				\delta(\varphi^1(\cl{c}))=\delta([c^{1,j-2}])=[\alpha^{2,j-2}].
			\end{equation}
			Comme $\pi(b^{1,j-2})=\pi(\beta^{1,j-2})=c$, il existe un unique $\alpha^{1,j-2}$ tel que $b^{1,j-2}-\beta^{1,j-2}=\iota(\alpha^{1,j-2})$. L'homomorphisme $\iota$ \'etant injectif, par application de $d_1$ on obtient $a^{2,j-2}-\alpha^{2,j-2}=d_1(\alpha^{1,j-2})$, donc
			\begin{equation}\label{vers-3}
				[a^{2,j-2}]=[\alpha^{2,j-2}].
			\end{equation}
			La combinaison de (\ref{vers-1}), (\ref{vers-2}) et (\ref{vers-3}) entra\^ine $\varphi^2(\partial(\cl{c}))=\delta(\varphi^1(\cl{c}))$.
		\end{proof}
		
		\begin{rmk}\label{translation}
			En appliquant le foncteur de translation, on obtient des variantes du lemme \ref{bord}. Par exemple, soient $A\to B\xrightarrow{f} C\to A[1]$
			un triangle dans $D^{\geq 0}(X_{\et})$ et $j\geq 2$ un entier. Supposons 
			$\H^{j-1}(X^s,C^s)=0$ et que l'homomorphisme $\H^{j-2}(X^s,A^s)\to \H^{j-2}(X^s,B^s)$ est surjectif. Alors on a  $$f_*(F^1\H^{j}(X,B))\subset F^2\H^j(X,C),$$ et on a un carr\'e commutatif au signe pr\`es
			\[
			\begin{tikzcd}
				F^1\H^j(X,B) \arrow[r,"\varphi^1"] \arrow[d,"f_*"]   & H^1(k, \H^{j-1}(X^s,B^s)) \arrow[d,"\delta"]  \\
				F^2 \H^j(X,C) \arrow[r,"\varphi^2"] & H^2(k,\H^{j-2}(X^s,C^s)),
			\end{tikzcd}
			\]
			o\`u $\delta$ est le bord de la suite exacte longue de cohomologie associ\'ee \`a la suite exacte courte de $G$-modules
			\[0 \to \H^{j-2}(X^s,C^s)\to \H^{j-1}(X^s,A^s)\to \H^{j-1}(X^s,B^s) \to 0.\] 
		\end{rmk}

		\subsection{Le complexe de Lichtenbaum}
		Pour tout sch\'ema noeth\'erien $U$, on notera $\Gamma(U,2)$ le complexe de faisceaux sur le petit site \'etale de $U$ d\'efini par Lichtenbaum dans \cite[Definition 2.3]{lichtenbaum1987construction}. Ce complexe est concentr\'e en degr\'es $1$ et $2$; voir \cite[Remark 2.2]{lichtenbaum1987construction}. Si $U=\Spec(A)$ est affine, on \'ecrira $\Gamma(A,2)$ pour $\Gamma(U,2)$.
		
		On suppose d\'esormais que $\on{car}(k)=0$ et que la $k$-vari\'et\'e $X$ est lisse et g\'eom\'e\-triquement connexe. On pose $\Gamma(2)\coloneqq \Gamma(X,2)$.
		
		Soit $j\colon \Spec k(X)\hookrightarrow X$ l'inclusion du point g\'en\'erique. On a un isomorphisme canonique $\Gamma(k(X),2)\simeq j^*\Gamma(2)$. On note $\Gamma(k(X)/X,2)$ la fibre homotopique de l'homomorphisme canonique $\Gamma(2)\to Rj_*\Gamma(k(X),2)$ dans la cat\'egorie d\'eriv\'ee de $X_{\et}$ et par $\H^*(k(X)/k,\Gamma(2))$ l'hypercohomologie (\'etale) de $\Gamma(k(X)/X,2)$. On a donc un triangle exact
		\begin{equation}\label{generic-triangle}\Gamma(k(X)/X,2)\to\Gamma(2)\to Rj_*\Gamma(k(X),2)\to\Gamma(k(X)/X,2)[1]\end{equation}
		qui induit une suite exacte courte de $G$-modules	
		\begin{equation}\label{gamma-2-galois}
			0 \to \H^2(k_s(X),\Gamma(2))/\H^2(X^s,\Gamma(2)) \to \H^3(k_s(X)/X^s,\Gamma(2)) \to \H^3(X^s,\Gamma(2)) \to 0
		\end{equation}
		et une suite exacte longue
  \begin{equation}\label{generic-les}
\begin{split}
\cdots \to \H^i(k(X)/k,\Gamma(2))\to \H^i(X,\Gamma(2)) & \to \H^i(k(X),\Gamma(2)) \\
& \hspace{-0.3cm} \to \H^{i+1}(k(X)/k,\Gamma(2))\to \cdots
\end{split}
\end{equation}
		Dans (\ref{gamma-2-galois}), on a utilis\'e le fait que $\H^3(k_s(X),\Gamma(2))=0$; voir \cite[Proposition 4.3]{lichtenbaum1987construction}.

		On a le complexe de Gersten-Quillen
		\begin{equation}\label{gq-0}
			K_2(k_s(X))\to \bigoplus_{x\in (X^s)^{(1)}}k_s(x)^*\to \bigoplus_{x\in (X^s)^{(2)}}\Z.
		\end{equation}
		Posons
		\[Z\coloneqq \on{Ker}\left[\bigoplus_{x\in (X^s)^{(1)}}k_s(x)^*\to \bigoplus_{x\in (X^s)^{(2)}}\Z\right].\] 
		On obtient une suite exacte courte de $G$-modules
		\begin{equation}\label{gq-1}
			0 \to K_2(k_s(X))/H^0(X^s,\mc{K}_2)\to Z \to H^1(X^s,\mc{K}_2)\to 0.
		\end{equation}

		\begin{prop}\label{ct-raskind-lichtenbaum}
			On a un diagramme commutatif de $G$-modules
			\begin{equation*}\label{ct-raskind-licthenbaum-diag}
				\adjustbox{max width=\textwidth}{
					\begin{tikzcd}
						0 \arrow[r] & K_2(k_s(X))/H^0(X^s,\mc{K}_2)\arrow[r]\arrow[d,"\wr"] & Z \arrow[r]\arrow[d,"\wr"] & H^1(X^s,\mc{K}_2)\arrow[d,"\wr"]  \arrow[r] & 0	\\
						0\arrow[r] & \H^2(k_s(X),\Gamma(2))/\H^2(X^s,\Gamma(2))\arrow[r]  & \H^3(k_s(X)/X^s,\Gamma(2)) \arrow[r] &  \H^3(X^s,\Gamma(2))\arrow[r] &  0,
					\end{tikzcd}
				}
			\end{equation*}
			o\`u la suite exacte du haut est  (\ref{gq-1})  et celle du bas est induite par (\ref{gamma-2-galois}).
		\end{prop}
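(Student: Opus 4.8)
The plan is to realise both short exact sequences of the diagram as the degree‑$2$ shadow of one and the same Gersten (coniveau) resolution: the top row produced by algebraic $K$‑theory and the bottom row by the Lichtenbaum complex. Once the two resolutions are matched term by term, the three vertical arrows and the commutativity of the squares will be induced by the localization triangle (\ref{generic-triangle}), and $G$‑equivariance will be automatic from functoriality.

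First I would record the low‑degree étale hypercohomology of the Lichtenbaum complexes over a field $F$ of characteristic zero: one has $\Gamma(0)\simeq \Z$ (in degree $0$), $\Gamma(1)\simeq \G_m[-1]$, and Lichtenbaum's identification $\H^2(F,\Gamma(2))\simeq K_2(F)$, together with $\H^j(F,\Gamma(2))=0$ for $j\leq 0$, $\H^1(F,\Gamma(2))\simeq K_3^{\mathrm{ind}}(F)$, and the vanishing $\H^3(k_s(X),\Gamma(2))=0$ of \cite[Proposition 4.3]{lichtenbaum1987construction}. Cohomological purity for $\Gamma(2)$ on the smooth $k_s$‑variety $X^s$ then yields the coniveau spectral sequence
\[E_1^{p,q}=\bigoplus_{x\in (X^s)^{(p)}}\H^{q-p}(k_s(x),\Gamma(2-p))\Rightarrow \H^{p+q}(X^s,\Gamma(2)),\]
whose row $q=2$ reads
\[K_2(k_s(X))\to \bigoplus_{x\in (X^s)^{(1)}}k_s(x)^*\to \bigoplus_{x\in (X^s)^{(2)}}\Z,\]
that is, the Gersten–Quillen complex (\ref{gq-0}).

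The decisive step is to identify this row, differentials included, with (\ref{gq-0}): this amounts to matching the boundary maps of the coniveau spectral sequence for $\Gamma(2)$ with the $K$‑theoretic tame symbol and divisor maps. Granting this, every $E_1$‑entry contributing to total degrees $2$ and $3$ other than $E_1^{0,2}$ and $E_1^{1,2}$ vanishes (by the field computations above and by $\H^3(k_s(X),\Gamma(2))=0$), and a direct inspection shows there are no nonzero differentials into or out of the surviving terms. Reading off the abutment gives $G$‑equivariant isomorphisms
\[\H^2(X^s,\Gamma(2))\simeq H^0(X^s,\mc{K}_2),\qquad \H^3(X^s,\Gamma(2))\simeq H^1(X^s,\mc{K}_2),\]
together with $\H^2(k_s(X),\Gamma(2))\simeq K_2(k_s(X))$ at the generic point, the first isomorphism being compatible with the edge maps to the generic stalk. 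Running the same machine for the fiber $\Gamma(k(X)/X,2)$ of (\ref{generic-triangle}), whose coniveau spectral sequence is the part supported in codimension $\geq 1$, gives $\H^3(k_s(X)/X^s,\Gamma(2))\simeq Z$, the kernel of the divisor map.

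Finally I would assemble these identifications. The three coniveau spectral sequences, for $\Gamma(2)$, for $Rj_*\Gamma(k(X),2)$, and for the fiber $\Gamma(k(X)/X,2)$, are compatible with the localization triangle (\ref{generic-triangle}); in degree $q=2$ they fit into a short exact sequence of complexes whose generic term is $K_2(k_s(X))$ and whose codimension‑$\geq 1$ part computes $Z$. The resulting long exact cohomology sequence is exactly (\ref{gamma-2-galois}), and naturality of the comparison identifies it with (\ref{gq-1}); this furnishes the two outer vertical arrows and the commutativity of the squares, after which the five lemma confirms that the middle arrow $Z\to \H^3(k_s(X)/X^s,\Gamma(2))$ is an isomorphism. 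All constructions being functorial for the $G$‑action on $X^s$, every map is $G$‑equivariant. The main obstacle is the step flagged above: establishing cohomological purity for the étale Lichtenbaum complex $\Gamma(2)$ and matching the Gersten differentials with the $K$‑theoretic residue maps. Once that comparison is secured, the cohomology computations and the diagram chase are routine.
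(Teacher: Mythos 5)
Your strategy --- realize both rows as the $q=2$ row of a coniveau spectral sequence for $\Gamma(2)$ and let functoriality in the localization triangle (\ref{generic-triangle}) do the rest --- is a reasonable skeleton, and the bookkeeping you do with it (vanishing of the other $E_1$-terms in total degrees $2$ and $3$, absence of higher differentials, the five lemma at the end) is correct as far as it goes. But there is a genuine gap, and it is exactly the point you yourself flag as ``the main obstacle'': cohomological purity for $\Gamma(2)$, i.e. $\H^{q}_x(X^s,\Gamma(2))\simeq \H^{q-2p}(k_s(x),\Gamma(2-p))$ for $x$ of codimension $p$, together with the identification of the coniveau $d_1$-differentials with the tame symbol and divisor maps of the Gersten--Quillen complex (\ref{gq-0}). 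This is not a routine verification that can be deferred: it is the entire content of the proposition, and it is precisely what the paper's proof establishes. The paper, following \cite[Theorem 4.4]{lichtenbaum1990new}, does this at the level of stalks: Lichtenbaum's isomorphisms $\theta_{A_x}\colon K_2(A_x)\oi H^2(A_x,\Gamma(2))$ at strict henselizations, the Gersten conjecture for $A_x$ (Quillen \cite[Theorem 5.11]{quillen} plus a limit argument), the explicit compatibility (\ref{residu}) of $\theta$ with the residue maps, the computation of $\mc{H}^1(\psi)$ via $K_3(-)_{\on{ind}}$, and finally Lichtenbaum's cone lemma \cite[Lemma 4.3]{lichtenbaum1990new} together with uniqueness of cones to recover the triangle and hence the bottom row (\ref{gamma-2-galois}). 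Any honest completion of your argument would have to reproduce essentially this material (or an equivalent purity theorem), so the coniveau formalism buys no shortcut.

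There is also a quantitative reason the gap matters: the purity and localization results you would want to quote from \cite{lichtenbaum1990new} are proved there only up to $2$-torsion. A central point of the paper's proof is that this restriction can be removed using Kahn's results on $K_3(-)_{\on{ind}}$ (\cite{kahn1996applications}, \cite{kahn1992k3}); this is unavoidable, because the paper's application (Th\'eor\`eme \ref{exemplevoulu}) produces classes of $2$-torsion, so a statement valid \`a la $2$-torsion pr\`es is worthless there. Your proposal never addresses integrality at the prime $2$; thus, even granting the coniveau machinery, citing the existing literature for your ``decisive step'' would not yield the proposition as stated.
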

		
		\begin{proof}
			La d\'emonstration s'appuie sur des arguments contenus dans la preuve de \cite[Theorem 4.4]{lichtenbaum1990new}. Dans \cite[Theorem 4.4]{lichtenbaum1990new} Lichtenbaum travaille \`a la $2$-torsion pr\`es mais, comme on expliquera dans la suite, on peut utiliser des r\'esultats de Kahn \cite{kahn1996applications} pour supprimer cette hypoth\`ese. 
			Si $C$ est un complexe de faisceaux ab\'eliens \'etales sur $X$, on notera $\mc{H}^i(C)$  le 
			$i$-\`eme faisceau d'homologie de $C$.
			
			Comme le foncteur $j^*$ est exact, on a un isomorphisme canonique \[Rj^*\Gamma(2)\simeq j^*\Gamma(2)\simeq\Gamma(k(X),2).\] 
			L'adjonction entre $j^*$ et $j_*$ donne alors un homomorphisme canonique \[\Gamma(2)\to Rj_*\Gamma(k(X),2)\] qui fait partie de (\ref{generic-triangle}). Le complexe $\Gamma(2)$ \'etant acyclique en degr\'es $\neq 1,2$,	cette fl\`eche factorise par \[\psi\colon \Gamma(2)\to \tau_{\leq 3}Rj_*\Gamma(k(X),2),\]
			o\`u $\tau$ est le foncteur canonique de troncation. 
			Tous les triangles qui compl\`etent $\psi$ sont (non canoniquement) isomorphes entre eux. Donc, la suite (\ref{gamma-2-galois}) \'etant induite par  (\ref{generic-triangle}), elle est aussi induite par le triangle
			\begin{equation}\label{generic-triangle2}\Gamma(2)\xrightarrow{\psi} \tau_{\leq 3}Rj_*\Gamma(k(X),2)\to \on{Cone}(\psi)\to \Gamma(2)[1].
			\end{equation}
			Pour tout $x\in X$, soient $A_x$ l'henselis\'e strict de $X$ \`a $x$ et $K_x$ le corps des fractions de $A_x$. Pour tout $x\in X$, on a un diagramme commutatif 
			\begin{equation}\label{stalks}
				\begin{tikzcd}
					(j_x)^*\mc{H}^2(\Gamma(2))\arrow[d,hook,"\text{$(j_x)^*(\mc{H}^2(\psi))$}"]\arrow[r,"\simeq"]  & H^2(A_x,\Gamma(2))\arrow[d,hook]  & \arrow[l,"\simeq","\theta_{A_x}"'] K_2(A_x) \arrow[d,hook,"\psi_x"]\\
					(j_x)^*\mc{H}^2(Rj_*\Gamma(k(X),2)) \arrow[r,"\simeq"]  & H^2(K_x,\Gamma(2))  & \arrow[l,"\simeq","\theta_{K_x}"'] K_2(K_x).
				\end{tikzcd}
			\end{equation}
			(Noter que $H^2(A_x,\Gamma(2))=\mc{H}^2(\Gamma(A_x,2))$ et $H^2(K_x,\Gamma(2))=\mc{H}^2(\Gamma(K_x,2))$.)
			Les isomorphismes de gauche dans (\ref{stalks}) sont donn\'es par le fait que la construction  de $\Gamma(2)$ commute avec tout morphisme \'etale et par l'exactitude du foncteur $(j_x)^*$. Dans le carr\'e de droite dans (\ref{stalks}), l'isomorphisme $\theta_{A_x}$ est construit 
			dans \cite[Proposition 2.9]{lichtenbaum1990new}.
			
			Venons \`a la d\'efinition de $\theta_{K_x}$. De mani\`ere plus g\'en\'erale,
			pour tout corps $K$ on dispose d'un isomorphisme canonique
			\[\theta_K\colon K_2(K)\oi  \H^2(K,\Gamma(2)).\]
			La construction de $\theta_K$ est donn\'ee dans \cite[p. 195]{lichtenbaum1987construction}, et le fait qu'il est un isomorphisme suit de \cite[Theorem 4.5]{lichtenbaum1990new}. 
			L'homomorphisme $\psi_x$ appara\^{\i}t dans le complexe de Gersten-Quillen pour $A_x$ (on note
			$\kappa(P)$ le corps
			r\'esiduel en  le point $P$) :
			\begin{equation}\label{gersten-complex}
				0\to K_2(A_x)\to K_2(K_x)\to \bigoplus_{P \in \Spec(A_x)^{(1)}}K_1(\kappa(P))\to \bigoplus_{P\in \Spec(A_x)^{(2)}}K_0(\kappa(P))\to 0,	
			\end{equation}
			complexe qui est exact d'apr\`es la conjecture de Gersten pour $A_x$ d\'emontr\'ee par Quillen \cite[Theorem 5.11]{quillen}. Plus pr\'ecisement, le th\'eor\`eme de Quillen s'applique aux anneaux locaux des $k$-vari\'et\'es lisses, mais comme le complexe de Gersten-Quillen commute aux limites inductives d'anneaux avec homomorphismes de transition \'etales, la conjecture de Gersten pour $A_x$ s'en suit.
			
			Par l'adjonction entre $(j_x)^*$ et $(j_x)_*$, les $(\psi_x)_{x\in X^{(1)}}$ induisent un homomorphisme
			\begin{equation}\label{lichtenbaum-map}
				\phi=(\phi_x)_{x\in X^{(1)}}\colon\mc{H}^2(Rj_*\Gamma(k(X),2))\to \prod_{x\in X^{(1)}}(j_x)_*\G_m.
			\end{equation}
			Par construction des $\phi_x$, l'inclusion $k(X)\subset K_x$ induit un diagramme commutatif 
			\[
			\begin{tikzcd}
				k(x)^* \arrow[d,hook] & \arrow[l] K_2(k(X))\arrow[r,"\theta_{k(X)}","\simeq"'] \arrow[d] & H^2(k(X),\Gamma(2))\arrow[d] \arrow[ll,bend right=30,"\phi_x"]  \\
				k(x)_s^* & \arrow[l] K_2(K_x)\arrow[r,"\theta_{K_x}","\simeq"'] & H^2(K_x,\Gamma(2)), 
			\end{tikzcd}
			\]
			o\`u les homomorphismes de gauche sont les applications r\'esidu usuels qui apparaissent dans le complexe de Gersten-Quillen pour $X$ et $A_x$. Donc
			\begin{equation}\label{residu}
				K_2(k(X))\xrightarrow[\simeq]{\theta_{k(X)}} H^2(k(X),\Gamma(2))\xrightarrow{\phi_x} k(x)^* \text{ est l'application r\'esidu pour $x$.}
			\end{equation}
			Si $F$ est un faisceau \'etale sur $X$, pour toute section $s$ du faisceau \'etale $j_*j^*F$ il existe un ouvert dense $U\subset X$ tel que $s|_U$ provient de $F(U)$. Une application de cette observation \`a une r\'esolution injective de $Rj_*\Gamma(k(X),2)$ montre que pour toute section $s$ du faisceau \'etale $\mc{H}^2(Rj_*\Gamma(k(X),2))$, le nombre des points $x\in X^{(1)}$ tels que $\phi_x(s)\neq 0$ est fini. Donc $\phi$ s'ins\`ere dans la suite exacte de faisceaux \'etales
			\[0\to \mc{H}^2(\Gamma(2))\xrightarrow{\mc{H}^2(\psi)} \mc{H}^2(Rj_*\Gamma(k(X),2))\xrightarrow{\phi} \bigoplus_{x\in X^{(1)}}(j_x)_*\G_m\rightarrow \bigoplus_{x\in X^{(2)}}(j_x)_*\Z\to 0.\]
			Il existe alors un isomorphisme $\on{Coker}(\mc{H}^2(\psi))\simeq \mc{Z}$, o\`u $\mc{Z}$ est le faisceau \'etale
			\[\mc{Z}\coloneqq\on{Ker}\left[\bigoplus_{x\in X^{(1)}}(j_x)_*\G_m\to \bigoplus_{x\in X^{(2)}}(j_x)_*\Z\right].\]
			En particulier $p_*\mc{Z}= Z$. 
			
			En outre, l'homomorphisme $\mc{H}^1(\psi)$ est un isomorphisme. Ceci a \'et\'e d\'emontr\'e par Lichtenbaum \cite[p. 49]{lichtenbaum1990new} \`a la $2$-torsion pr\`es. L'\'enonc\'e complet suit du fait que, pour tout point $x\in X$, on a un diagramme commutatif (analogue \`a (\ref{stalks}))
			\[
			\begin{tikzcd}
				(j_x)^*\mc{H}^1(\Gamma(2))\arrow[d,"\text{$(j_x)^*(\mc{H}^1(\psi))$}"]\arrow[r,"\simeq"]  & H^1(A_x,\Gamma(2))\arrow[d,"\wr"]  & \arrow[l,"\simeq"'] K_3(A_x)_{\on{ind}} \arrow[d,"\wr"]\\
				(j_x)^*\mc{H}^1(Rj_*\Gamma(k(X),2)) \arrow[r,"\simeq"]  & H^1(K_x,\Gamma(2))  & \arrow[l,"\simeq"'] K_3(K_x)_{\on{ind}},
			\end{tikzcd}
			\]
			o\`u le carr\'e de gauche est induit par la compatibilit\'e de la construction de $\Gamma(2)$ avec tout morphisme \'etale et l'exactitude de $(j_x)^*$, et le carr\'e de droite est contenu dans \cite[bas de p. 399]{kahn1996applications}. On rappelle que, pour tout corps $K$, le groupe $K_3(K)_{\on{ind}}$ est d\'efini comme le conoyau de l'application naturelle $K_{3}^{\text{Milnor}}(K) \to K_3^{\text{Quillen}}(K)$.
			
			En conclusion, on a montr\'e que l'application $\mc{H}^1(\psi)$ est un isomorphisme, que $\mc{H}^2(\psi)$ est injective et que l'on a un isomorphisme de $G$-modules $\on{Coker}(\mc{H}^2(\psi))\simeq \mc{Z}$. Le lemme \cite[Lemma 4.3]{lichtenbaum1990new} donne alors un triangle exact
			\begin{equation}\label{generic-triangle3}\Gamma(2)\xrightarrow{\psi} \tau_{\leq 3}Rj_*\Gamma(k(X),2)\xrightarrow{\tilde{\phi}}\mc{Z}[-2]\to \Gamma(2)[1],
			\end{equation}
			o\`u $\mc{H}^2(\tilde{\phi})=\phi$. Les triangles (\ref{generic-triangle2}) et (\ref{generic-triangle3}) compl\`etent $\psi$, donc ils sont isomorphes. La suite de $G$-modules (\ref{gamma-2-galois}) \'etant induite par le triangle (\ref{generic-triangle2}), elle est alors isomorphe \`a la suite de $G$-modules induite par application de $Rp_*$ dans le triangle (\ref{generic-triangle3}):
			\[0\to \H^2(k_s(X),\Gamma(2))/\H^2(\Gamma(X^s,2))\to Z\to \H^3(X^s,\Gamma(2))\to 0,\]
			o\`u on a utilis\'e le fait que $p_*\mc{Z}= Z$.
			Consid\'erons le diagramme de suites exactes de $G$-modules suivant:
			\begin{equation}\label{ct-raskind-licthenbaum-reduced}
				\adjustbox{max width=\textwidth}{
					\begin{tikzcd}
						0\arrow[r] & K_2(k_s(X))/H^0(X^s,\mc{K}_2)\arrow[r] \arrow[d,"\wr"] & Z \arrow[r] \arrow[d,equal]& H^1(X^s,\mc{K}_2) \arrow[r] \arrow[d,"\wr"] &  0\\
						0 \arrow[r] & \H^2(k_s(X),\Gamma(2))/\H^2(\Gamma(X^s,2)) \arrow[r] & Z \arrow[r] & \H^3(X^s,\Gamma(2))  \arrow[r] & 0.
					\end{tikzcd}
				}
			\end{equation}
			Ici la suite du haut est (\ref{gq-1}), celle du bas est (\ref{generic-triangle}), l'isomorphisme vertical de gauche est induit par l'isomorphisme canonique $$\theta_{k_s(X)}\colon K_2(k_s(X))\oi  \H^2(k_s(X),\Gamma(2))$$ de \cite[Theorem 4.5]{lichtenbaum1990new} d\'ej\`a mentionn\'e ci-dessus. La commutativit\'e du carr\'e de gauche suit de l'assertion (\ref{residu}), consid\'er\'ee sur $k_s$. La fl\`eche verticale de droite est obtenue par passage aux quotients. 
			
			Le diagramme  (\ref{ct-raskind-licthenbaum-reduced}) est donc commutatif, ce qui \'etablit la Proposition 
			\ref{ct-raskind-lichtenbaum}.
		\end{proof}

		Notons
		$$CH^2(X)_{0} \coloneqq \on{Ker} [CH^2(X) \to CH^2(X^s)].$$
		Si $X$ satisfait (\ref{h4}), le groupe $H^0(X^s, \mc{K}_2)$  est  uniquement divisible (Th\'eor\`eme \ref{bloch-ct-raskind}(a)) et, comme montr\'e par le lemme \ref{phi-def} ci-dessous, l'application \[H^1(k,Z) \to H^1(k, H^1(X^s,\mc{K}_2))\] induite par  (\ref{gq-1}) s'identifie \`a la fl\`eche
		$ \Phi \colon   CH^2(X)_0 \to  H^1(k,S)$ dans le  th\'eor\`eme \ref{bloch-ct-raskind} ci-dessus.
		
		\begin{lemma}\label{phi-def}
			Supposons que $X$ satisfait (\ref{h4})	et soit $S$ le $k$-tore de groupe des cocaract\`eres $\Pic(X^s)$. Alors on a un carr\'e commutatif
			\[
			\begin{tikzcd}
				CH^2(X)_0\arrow[r,"\Phi"]\arrow[d,"\wr"] & H^1(k,S) \arrow[d,"\wr"] \\
				H^1(k, Z)\arrow[r] & H^1(k,H^1(X^s,\mc{K}_2)),
			\end{tikzcd}
			\]
			la fl\`eche horizontale inf\'erieure \'etant induite par la suite (\ref{gq-1}).
		\end{lemma}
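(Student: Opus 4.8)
Le plan est d'établir d'abord les deux isomorphismes verticaux, puis la commutativité du carr\'e.

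\emph{Fl\`eche de droite.} Je partirais du th\'eor\`eme~\ref{bloch-ct-raskind}(b), qui assure que, sous l'hypoth\`ese~(\ref{h4}), la fl\`eche naturelle
\[S(k_s) = \Pic(X^s)\otimes k_s^* \longrightarrow H^1(X^s,\mc{K}_2)\]
a noyau et conoyau uniquement divisibles. Or tout $G$-module uniquement divisible $D$ v\'erifie $H^i(k,D)=0$ pour $i\geq 1$~: pour tout sous-groupe ouvert $U$, le module $D^U$ reste uniquement divisible, et un groupe ab\'elien \`a la fois divisible et annul\'e par un entier non nul est nul, de sorte que $H^i(G/U,D^U)=0$ pour $i\geq 1$, d'o\`u $H^i(k,D)=\varinjlim_U H^i(G/U,D^U)=0$. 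En scindant la fl\`eche ci-dessus en une surjection et une injection de conoyau, resp. noyau, uniquement divisibles, on en d\'eduit que l'application induite
\[H^1(k,S)=H^1(k,\Pic(X^s)\otimes k_s^*)\longrightarrow H^1(k,H^1(X^s,\mc{K}_2))\]
est un isomorphisme, ce qui donne la fl\`eche verticale de droite.

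\emph{Fl\`eche de gauche.} J'utiliserais le complexe de Gersten-Quillen~(\ref{gq-0}). Posons $D_1=\bigoplus_{x\in (X^s)^{(1)}}k_s(x)^*$ et $D_2=\bigoplus_{x\in (X^s)^{(2)}}\Z$, de sorte que $Z=\on{Ker}[D_1\xrightarrow{d}D_2]$. Ces $G$-modules sont des sommes de modules induits $\on{Ind}_{G_x}^G(-)$ index\'ees par les points de codimension $1$, resp. $2$, de $X$, o\`u $G_x$ est un groupe de d\'ecomposition. Par le lemme de Shapiro et le th\'eor\`eme~90 de Hilbert, $H^1(k,D_1)=\bigoplus_x H^1(G_x,k_s(x)^*)=0$, et $H^1(k,D_2)=\bigoplus_x H^1(G_x,\Z)=0$ puisque $G_x$ est profini. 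En posant $B=\on{Im}(d)$, les suites exactes de $G$-modules
\[0\to Z\to D_1\to B\to 0,\qquad 0\to B\to D_2\to CH^2(X^s)\to 0,\]
jointes aux identifications de descente $D_1^G=\bigoplus_{x\in X^{(1)}}k(x)^*$, $D_2^G=\bigoplus_{x\in X^{(2)}}\Z$ et $CH^2(X)=\on{Coker}[D_1^G\to D_2^G]$, fournissent par une chasse au diagramme (utilisant la nullit\'e des $H^1(k,D_i)$ et l'\'egalit\'e $B^G=B\cap D_2^G$) les identifications
\[H^1(k,Z)=B^G/\on{Im}[D_1^G\to B^G]=\on{Ker}[CH^2(X)\to CH^2(X^s)]=CH^2(X)_0.\]
C'est la fl\`eche verticale de gauche~; cet isomorphisme est celui sous-jacent \`a la suite exacte de \cite[Prop.~3.6]{CTR85} rappel\'ee dans la preuve du th\'eor\`eme~\ref{bloch-ct-raskind}.

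\emph{Commutativit\'e.} Via ces identifications, une classe de $CH^2(X)_0$ se repr\'esente par un \'el\'ement $b\in B^G$, dont l'image dans $H^1(k,Z)$ est le cobord $\delta(b)=[g\mapsto g\tilde b-\tilde b]$, o\`u $\tilde b\in D_1$ rel\`eve $b$ et $\delta$ est le bord de $0\to Z\to D_1\to B\to 0$. La fl\`eche inf\'erieure, induite par~(\ref{gq-1}), envoie cette classe sur $[g\mapsto \cl{g\tilde b-\tilde b}]\in H^1(k,H^1(X^s,\mc{K}_2))$, o\`u la barre d\'esigne la projection $Z\to H^1(X^s,\mc{K}_2)$ de~(\ref{gq-1}). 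Le principal obstacle sera de v\'erifier que l'application $\Phi$ du th\'eor\`eme~\ref{bloch-ct-raskind}, compos\'ee avec l'isomorphisme de droite, co\"{\i}ncide exactement avec ce cobord. Pour cela je reprendrais la construction de $\Phi$ dans \cite[Prop.~3.6]{CTR85}, o\`u $\Phi$ est pr\'ecis\'ement le morphisme de bord galoisien associ\'e \`a~(\ref{gq-1})~: une fois toutes les identifications mises en place, la commutativit\'e devient tautologique. Toute la d\'elicatesse r\'eside dans le suivi soigneux de ces identifications successives — descente galoisienne sur les cycles, passage aux $G$-invariants du complexe de Gersten, et compatibilit\'e du cobord $\delta$ avec la projection $Z\to H^1(X^s,\mc{K}_2)$.
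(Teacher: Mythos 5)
Your overall route coincides with the paper's own proof: the left vertical isomorphism $CH^2(X)_0\simeq H^1(k,Z)$ is exactly the one underlying the proof of \cite[Prop. 3.6]{CTR85} (your Galois--descent computation with the Gersten complex, Shapiro and Hilbert 90 is a correct unwinding of that proof), the right vertical isomorphism is the one extracted from Th\'eor\`eme \ref{bloch-ct-raskind}(b), and the commutativity is indeed tautological: in the proof of Th\'eor\`eme \ref{bloch-ct-raskind}(c), $\Phi$ is \emph{d\'efinie} as the composite $CH^2(X)_0\simeq H^1(k,Z)\to H^1(k,H^1(X^s,\mc{K}_2))$ (the map of \cite[Prop. 3.6]{CTR85}, induced by the surjection in (\ref{gq-1})) followed by the inverse of the right vertical isomorphism. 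So your plan for the commutativity step is sound and is precisely the paper's argument.

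There is, however, a genuine gap in your derivation of the right vertical isomorphism. Write $T=\Pic(X^s)\otimes k_s^*$, $H=H^1(X^s,\mc{K}_2)$, let $I$ be the image of $T\to H$, $K$ its kernel and $C$ its cokernel. Your (correct) vanishing $H^i(k,K)=H^i(k,C)=0$ for $i\geq 1$ gives $H^1(k,T)\simeq H^1(k,I)$ and the \emph{surjectivity} of $H^1(k,I)\to H^1(k,H)$, but not its injectivity: the long exact sequence of $0\to I\to H\to C\to 0$ reads
$$H^0(k,C)\xrightarrow{\ \delta\ } H^1(k,I)\to H^1(k,H)\to H^1(k,C)=0,$$
and $H^0(k,C)=C^G$ has no reason to vanish (it is in general a large $\Q$-vector space), so one must kill the image of $\delta$. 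This is where the paper uses an input missing from your argument: since $\Pic(X^s)=\on{NS}(X^s)$ is a lattice, the torus $S$ is split by a finite Galois extension, so $H^1(k,T)=H^1(k,S)$ has \emph{exposant fini}; the image of $\delta$ is a divisible subgroup (homomorphic image of the uniquely divisible group $C^G$) of a group of finite exponent, hence zero. With this one-line addition your proof of the right isomorphism is complete; alternatively, you could simply quote that isomorphism from the proof of Th\'eor\`eme \ref{bloch-ct-raskind}(c), which is what the paper's proof of the lemma does.
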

		
		\begin{proof}
			La fl\`eche verticale de gauche provient de la preuve de \cite[Proposition 3.6]{CTR85}: elle est un isomorphisme pour toute $k$-vari\'et\'e lisse g\'eom\'etriquement connexe. L'isomorphisme vertical de droite vient du th\'eor\`eme \ref{bloch-ct-raskind}(b); ici on utilise le fait que $X$ satisfait (\ref{h4}). 
			
			La commutativit\'e du carr\'e est par construction de $\Phi$. En effet, soit $f$ le compos\'e \[f\colon CH^2(X)_0\oi H^1(k,Z)\to H^1(k,H^1(X^s,\mc{K}_2)).\] Alors $f$ est l'application qui appara\^{\i}t dans l'\'enonc\'e de \cite[Proposition 3.6]{CTR85}. Dans la preuve du th\'eor\`eme \ref{bloch-ct-raskind}(b) on a d\'efini $\Phi$ comme la compos\'ee de $f$ et de l'inverse de l'isomorphisme $H^1(k,S)\oi H^1(k,H^1(X^s,\mc{K}_2))$. Donc le carr\'e commute, comme voulu.
		\end{proof}

		\subsection{Application cycle motivique \'etale}
		Dans \cite[\S 5]{lichtenbaum1990new}, Lichtenbaum cons\-truit une application cycle \[\on{cl}_{\Gamma}\colon CH^2(X)\to \H^4(X,\Gamma(2)).\] Par \cite[Lemma 2.1]{kahn1996applications}, on a un isomorphisme $CH^2(X)\oi  \H^4(k(X)/X,\Gamma(2))$.
		Plus pr\'ecis\'ement, dans la preuve de \cite[Lemma 2.1]{kahn1996applications}, Kahn note que l'homomorphisme $\H^4(k(X)/X,\Gamma(2))\to \H^4(X,\Gamma(2))$ provenant de (\ref{generic-les}) est injectif, et que l'application $\on{cl}_\Gamma$ factorise par l'isomorphisme ci-dessus:
		\begin{equation}\label{top-left}
			\begin{tikzcd}
				CH^2(X)\arrow[r,"\simeq"] \arrow[dr,"\on{cl}_{\Gamma}"] & \H^4(k(X)/X,\Gamma(2)) \arrow[d,hook]  \\
				& \H^4(X,\Gamma(2)).
			\end{tikzcd}
		\end{equation}
		La fonctorialit\'e de la suite spectrale (\ref{hochschild-serre-hyper}) en $A$ donne un carr\'e commutatif
		\begin{equation}\label{top-right}
			\begin{tikzcd}
				F^1\H^4(k(X)/X,\Gamma(2))\arrow[r,"\simeq"] \arrow[d,hook]  & H^1(k,\H^3(k_s(X)/X^s,\Gamma(2))\arrow[d] \\
				F^1\H^4(X,\Gamma(2)) \arrow[r] & H^1(k,\H^3(X^s,\Gamma(2))).
			\end{tikzcd}
		\end{equation}
		Comme expliqu\'e au d\'ebut de \cite[p. 403]{kahn1996applications}, le fait que	la fl\`eche horizontale sup\'erieure est un isomorphisme suit du fait que l'on a  $\H^i(k(X)/X,\Gamma(2))=0$ pour $i\leq 2$. 
		
		Pour tout entier $n\geq 1$, on a un triangle exact
		\begin{equation}\label{gamma-triangle}\Gamma(2)\xrightarrow{\times n}\Gamma(2)\to \mu_n^{\otimes 2}\to \Gamma(2)[1]
		\end{equation}
		dans la cat\'egorie d\'eriv\'ee born\'ee de $X_{\et}$. Ce triangle appara\^{\i}t dans \cite[(12)]{kahn1996applications}. Il a \'et\'e construit dans \cite[Corollary 8.4]{lichtenbaum1987construction} sous l'hypoth\`ese $n$ impair. Cette restriction sur $n$ \'etait utilis\'ee dans le calcul de la torsion et de la cotorsion dans $K_{3,\on{ind}}$ de \cite[Lemma 8.2]{lichtenbaum1987construction}. Comme remarqu\'e par Kahn \cite[apr\`es (12)]{kahn1996applications}, la construction de Lichtenbaum de (\ref{gamma-triangle}) s'\'etend \`a tout $n\geq 1$ si on remplace \cite[Lemma 8.2]{lichtenbaum1987construction} par le th\'eor\`eme principal de \cite{kahn1992k3}. 
		
		D'apr\`es \cite[Proposition 5.4]{lichtenbaum1990new}, pour tout nombre premier $\ell$ on a un triangle commutatif
		\begin{equation}\label{bottom-left}
			\begin{tikzcd}
				CH^2(X)\arrow[r,"\on{cl}_{\Gamma}"] \arrow[dr,"\on{cl}_n"] & \H^4(X,\Gamma(2)) \arrow[d]  \\
				& H^4(X,\mu_{\ell^n}^{\otimes 2}), 
			\end{tikzcd}
		\end{equation}
		la fl\`eche verticale \'etant induite par (\ref{gamma-triangle}). 
		 
		Notons comme ci-dessus $CH^2(X)_{0} \coloneqq \on{Ker} [CH^2(X) \to CH^2(X^s)]$.
		Les diagrammes (\ref{top-left}) et (\ref{top-right}) donnent une application
		\[\Phi'\colon CH^2(X)_0\to H^1(k,\H^3(X^s,\Gamma(2))).\] La proposition \ref{ct-raskind-lichtenbaum} a des cons\'equences int\'eressantes 
		pour $\Phi'$. Ces cons\'equences ne seront pas utilis\'ees dans la preuve du th\'eor\`eme \ref{compatible}. Supposons que $X$ satisfait (\ref{h4})	et soit $S$ le $k$-tore de groupe des cocaract\`eres $\Pic(X^s)$. On dispose alors de l'application $\Phi\colon CH^2(X)_0\to H^1(k,S)$.

		\begin{prop}\label{image}
			Supposons que $X$ satisfait (\ref{h4})	et soit $S$ le $k$-tore de groupe des cocaract\`eres $\Pic(X^s)$. 
			
			(a) On a un isomorphisme $\on{Im}(\Phi')\simeq \on{Im}(\Phi)$.
			
			(b) Supposons que le corps $k$ est de type fini sur $\Q$ et que $X$ est une surface rationnelle. Alors $\on{Im}(\Phi')$ est finie.
		\end{prop}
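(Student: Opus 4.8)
La strat\'egie pour (a) est d'identifier les images de $\Phi'$ et de $\Phi$ \`a celle d'une seule et m\^eme fl\`eche en cohomologie galoisienne, sans avoir \`a comparer les deux isomorphismes de source. Je v\'erifierais d'abord que, via l'isomorphisme de Kahn $CH^2(X)\oi \H^4(k(X)/X,\Gamma(2))$ de (\ref{top-left}), le sous-groupe $CH^2(X)_0$ correspond exactement \`a $F^1\H^4(k(X)/X,\Gamma(2))$. En effet, comme $\H^j(k_s(X)/X^s,\Gamma(2))=0$ pour $j\leq 2$, le gradu\'e de $\H^4(k(X)/X,\Gamma(2))$ pour la suite spectrale (\ref{hochschild-serre-hyper}) ne comporte que les termes $E_\infty^{0,4}$ et $E_\infty^{1,3}$, et la fl\`eche $\H^4(k(X)/X,\Gamma(2))\to E_\infty^{0,4}\subseteq H^0(k,\H^4(k_s(X)/X^s,\Gamma(2)))$ s'identifie, par fonctorialit\'e de l'isomorphisme de Kahn \cite{kahn1996applications} et de (\ref{hochschild-serre-hyper}), \`a la fl\`eche de restriction $CH^2(X)\to CH^2(X^s)$, dont le noyau est $CH^2(X)_0$. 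On a donc $F^1\H^4(k(X)/X,\Gamma(2))=CH^2(X)_0$.

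Il s'ensuit que l'isomorphisme $\varphi^1$ de la ligne sup\'erieure de (\ref{top-right}) applique bijectivement $CH^2(X)_0$ sur $H^1(k,\H^3(k_s(X)/X^s,\Gamma(2)))$ tout entier; comme $\Phi'$ est, par construction, la compos\'ee de $\varphi^1$ avec la fl\`eche $H^1(k,\H^3(k_s(X)/X^s,\Gamma(2)))\to H^1(k,\H^3(X^s,\Gamma(2)))$ induite par la surjection de (\ref{gamma-2-galois}), l'image $\on{Im}(\Phi')$ est l'image tout enti\`ere de cette derni\`ere fl\`eche. De m\^eme, le lemme \ref{phi-def} montre que $\on{Im}(\Phi)$ s'identifie, via l'isomorphisme $H^1(k,S)\oi H^1(k,H^1(X^s,\mc{K}_2))$, \`a l'image de la fl\`eche $H^1(k,Z)\to H^1(k,H^1(X^s,\mc{K}_2))$ induite par la surjection de (\ref{gq-1}), l'isomorphisme $CH^2(X)_0\oi H^1(k,Z)$ du lemme \ref{phi-def} \'etant surjectif. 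Or la proposition \ref{ct-raskind-lichtenbaum} fournit un isomorphisme des suites exactes courtes (\ref{gq-1}) et (\ref{gamma-2-galois}) compatible \`a leurs surjections; en lui appliquant $H^1(k,-)$, on obtient un carr\'e commutatif \`a fl\`eches verticales bijectives qui identifie les deux fl\`eches ci-dessus, d'o\`u $\on{Im}(\Phi')\simeq \on{Im}(\Phi)$. Ceci d\'emontrerait (a).

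Pour (b), la partie (a) ram\`ene la finitude de $\on{Im}(\Phi')$ \`a celle de $\on{Im}(\Phi)\subseteq H^1(k,S)$. Comme $X^s$ est rationnelle, le morphisme birationnel $X^s\dashrightarrow \P^2_{k_s}$ est d\'efini sur une extension finie $L/k$, et alors $X_L$ est $L$-rationnelle, d'o\`u $A_0(X_L)=A_0(\P^2_L)=0$ par invariance birationnelle de $A_0$ pour les surfaces projectives lisses. Un argument de restriction-corestriction montre que $A_0(X)$ est annul\'e par $[L:k]$; comme tout z\'ero-cycle de torsion est de degr\'e z\'ero, on a $A_0(X)=CH^2(X)_{\on{tors}}$, et $\on{Im}(\Phi)$, quotient de ce groupe, est d'exposant fini. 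La compatibilit\'e de $\Phi$ \`a la restriction et l'\'egalit\'e $\on{Im}(\Phi_L)=0$ donnent de plus $\on{Im}(\Phi)\subseteq \on{Ker}[H^1(k,S)\to H^1(L,S)]$; mais ce noyau peut \^etre infini, de sorte que cette r\'eduction ne suffit pas.

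La difficult\'e principale est donc la finitude proprement dite de $\on{Im}(\Phi)$: le groupe $H^1(k,S)$ est en g\'en\'eral infini pour $k$ de type fini sur $\Q$, et $CH^2(X)_{\on{tors}}=A_0(X)$ peut l'\^etre aussi, si bien que la finitude doit provenir sp\'ecifiquement du passage au quotient par $\on{Ker}(\Phi)$. Je l'obtiendrais par r\'eduction au cas des corps de nombres: on choisit un sous-anneau $R$ de type fini sur $\Q$, int\`egre, de corps des fractions $k$, et un mod\`ele projectif et lisse $\mathcal{X}\to \Spec R$ de $X$ \`a fibres g\'eom\'etriquement rationnelles; pour un point ferm\'e $\mathfrak{m}$ bien choisi, de corps r\'esiduel un corps de nombres, la fibre $\mathcal{X}_{\mathfrak{m}}$ est une surface g\'eom\'etriquement rationnelle sur un corps de nombres, pour laquelle $A_0$, et donc $\on{Im}(\Phi_{\mathfrak{m}})$, est fini (Colliot-Th\'el\`ene--Coray--Sansuc; voir \cite{CT93}). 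Le point d\'elicat, et l'endroit o\`u se concentre le travail, est de contr\^oler la sp\'ecialisation des classes d'exposant fini de $\on{Im}(\Phi)$ vers $\on{Im}(\Phi_{\mathfrak{m}})$ \`a l'aide de la cohomologie \'etale du mod\`ele et du changement de base propre, afin d'en d\'eduire la finitude sur $k$.
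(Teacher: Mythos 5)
Your part (a) is correct and takes essentially the same route as the paper: you identify $\on{Im}(\Phi')$ with the image of $H^1(k,-)$ applied to the surjection in (\ref{gamma-2-galois}), identify $\on{Im}(\Phi)$ via le lemme \ref{phi-def} with the image of $H^1(k,-)$ applied to the surjection in (\ref{gq-1}), and match the two using the isomorphism of short exact sequences of la proposition \ref{ct-raskind-lichtenbaum}. Your preliminary verification that $CH^2(X)_0$ corresponds to $F^1\H^4(k(X)/X,\Gamma(2))$ under Kahn's isomorphism, and that $\varphi^1$ is then bijective onto $H^1(k,\H^3(k_s(X)/X^s,\Gamma(2)))$, spells out a point the paper leaves implicit in the diagrams (\ref{top-right}) and (\ref{big-diagram}); this part is fine.

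Part (b) is where there is a genuine gap. You correctly reduce (b), via (a), to the finiteness of $\on{Im}(\Phi)\subset H^1(k,S)$, and you correctly diagnose that restriction--corestriction only gives finite exponent, which is insufficient because $\on{Ker}[H^1(k,S)\to H^1(L,S)]$ can be infinite (e.g.\ $k^*/N_{L/k}L^*$ for a norm-one torus over a number field). But the argument you then propose --- spread out over a finitely generated $\Q$-algebra $R$, specialize at a closed point with number-field residue field, and transfer finiteness back to $k$ --- is left incomplete at exactly its crucial step: you yourself write that ``le point d\'elicat \ldots est de contr\^oler la sp\'ecialisation''. This is not a detail one can wave at with proper base change: Chow groups of zero-cycles do not specialize injectively in general, and controlling how $\on{Im}(\Phi)$ behaves under reduction is precisely the hard content here. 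In fact, the finiteness of $\on{Im}(\Phi)$ for geometrically rational surfaces over fields finitely generated over $\Q$ is a known theorem of Colliot-Th\'el\`ene--Sansuc, \cite[Theorem 3(iv)]{CTS81}, and the paper's entire proof of (b) consists of citing it (plus the remark that any rational surface satisfies (\ref{h4})) and invoking (a). Completing your specialization sketch would amount to reproving that theorem, a substantial piece of work occupying a good part of \cite{CTS81}. As written, your proof of (b) is therefore missing its key input.
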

		
		\begin{proof}
			(a) Par d\'efinition, l'application $\Phi'$ est la compos\'ee 
			\[CH^2(X)_0\oi  H^1(k,\H^3(k_s(X)/X^s,\Gamma(2)))\xrightarrow{\rho} H^1(k,\H^3(X^s,\Gamma(2))),\]
			la fl\`eche  $\rho$ \'etant induite par la suite (\ref{gamma-2-galois}), donc
			$\on{Im}(\Phi')= \on{Im}(\rho)$.
			Par la proposition \ref{ct-raskind-lichtenbaum} on sait que
			\[\on{Im}(\rho)\simeq \on{Im}[H^1(k, Z)\to H^1(k,H^1(X^s,\mc{K}_2))],\]
			la fl\`eche de droite \'etant induite par (\ref{gq-1}).
			Par le lemme \ref{phi-def}, on a
			\[\on{Im}[H^1(k, Z)\to H^1(k,H^1(X^s,\mc{K}_2))]\simeq \on{Im}(\Phi).\]
			On conclut que $\on{Im}(\Phi')\simeq \on{Im}(\Phi)$.
			
			(b) Sous les hypoth\`eses faites, \cite[Theorem 3(iv)]{CTS81} assure la finitude de $\on{Im}(\Phi)$. Toute surface rationnelle satisfait (\ref{h4}). La conclusion suit alors de la partie (a).
		\end{proof}

		\subsection{D\'emonstration du th\'eor\`eme \ref{compatible}}
		
		\begin{proof} 
			Par \cite[Theorem 1.1(iii)]{kahn1996applications}, on a un isomorphisme 
			$$H^0(X^s,\mc{K}_2)\oi \H^2(X^s,\Gamma(2)).$$
			D'apr\`es le th\'eor\`eme \ref{bloch-ct-raskind}(a),
			l'hypoth\`ese (\ref{h4}) donne que ce groupe 
			est uniquement divisible. Ainsi	l'homomorphisme 
			\[\H^2(X^s,\Gamma(2))\xrightarrow{\times \ell^n} \H^2(X^s,\Gamma(2))\] est un isomorphisme. L'hypoth\`ese (\ref{h4}) entra\^ine aussi $H^3(X^s,\mu_{\ell^n}^{\otimes 2})=0$. On peut alors appliquer la remarque \ref{translation} (cons\'equence de la proposition \ref{bord}) au triangle (\ref{gamma-triangle}) avec $j=4$. On obtient un carr\'e commutatif au signe pr\`es
			\begin{equation}\label{bottom-right}
				\begin{tikzcd}
					F^1\H^4(X,\Gamma(2)) \arrow[r] \arrow[d] & H^1(k,\H^3(X^s,\Gamma(2))) \arrow[d,"\delta_n"] \\
					F^2H^4(X,\mu_{\ell^n}^{\otimes 2})\arrow[r]  & H^2(k, H^2(X^s,\mu_{\ell^n}^{\otimes 2})),
				\end{tikzcd}
			\end{equation}
			o\`u $\delta_n$ est un homomorphisme de bord associ\'e \`a la suite de $G$-modules
			\begin{equation}\label{exact-because-divisible}
				0\to H^2(X^s,\mu_{\ell^n}^{\otimes 2})\to \H^3(X^s,\Gamma(2))\xrightarrow{\times \ell^{n}}\H^3(X^s,\Gamma(2))\to 0.
			\end{equation}

			Pour tout entier $n\geq 1$, les diagrammes commutatifs (\ref{top-left}), (\ref{top-right}), (\ref{bottom-left}) et (\ref{bottom-right}) nous donnent le diagramme commutatif
			\begin{equation}\label{big-diagram}
				\begin{tikzcd}
					CH^2(X)_0 \arrow[r,"\simeq"]\arrow[dr,bend right=15,"\on{cl}_{\Gamma}"]\arrow[ddr,bend right=30,"\on{cl}_n"]   & F^1\H^4(k(X)/X,\Gamma(2))\arrow[r,"\simeq"] \arrow[d,hook]  & H^1(k,\H^3(k_s(X)/X^s,\Gamma(2))\arrow[d,"f"] \\
					& F^1\H^4(X,\Gamma(2)) \arrow[r] \arrow[d] & H^1(k,\H^3(X^s,\Gamma(2))) \arrow[d,"\delta_n"] \\
					& F^2H^4(X,\mu_{\ell^n}^{\otimes 2})\arrow[r]  & H^2(k, H^2(X^s,\mu_{\ell^n}^{\otimes 2})),
				\end{tikzcd}
			\end{equation}
			o\`u le carr\'e du bas commute au signe pr\`es.
			Notons 
			\[\beta\colon CH^2(X)_0\oi H^1(k,\H^3(k_s(X)/X^s,\Gamma(2))\]
			le compos\'e des deux isomorphismes du haut dans (\ref{big-diagram}).
			D'apr\`es \cite[Theorem 1.1(iv)]{kahn1996applications} on a un isomorphisme de $G$-modules
			\[H^1(X^s,\mc{K}_2)\oi \H^3(X^s,\Gamma(2)).\]
			Sous l'hypoth\`ese (\ref{h4}), l'application naturelle $\on{Pic}(X^s)\otimes k_s^*\to H^1(X^s,\mc{K}_2)$ a noyau et conoyau uniquement divisibles		(Th\'eor\`eme \ref{bloch-ct-raskind}(b)). 
			On obtient un diagramme commutatif de suites exactes courtes de $G$-modules
			\begin{equation}\label{uniq-divisible}
				\begin{tikzcd}
					0 \arrow[r] & \on{Pic}(X^s)\otimes \mu_{\ell^n} \arrow[r] \arrow[d]& \on{Pic}(X^s)\otimes k_s^* \arrow[r,"\times \ell^n"]\arrow[d] & \on{Pic}(X^s)\otimes k_s^* \arrow[r]\arrow[d] & 0\\
					0\arrow[r] & H^2(X^s,\mu_{\ell^n}^{\otimes 2}) \arrow[r] & \H^3(X^s,\Gamma(2))\arrow[r,"\times \ell^{n}"] &\H^3(X^s,\Gamma(2))\arrow[r] & 0.
				\end{tikzcd}
			\end{equation}
			
			Ici, la fl\`eche verticale de gauche est induite par la commutativit\'e du carr\'e de droite.  Comme les deux  fl\`eches verticales de gauche sont \`a noyau et conoyau des $\Q$-vectoriels, et que les deux groupes de gauche sont annul\'es par $\ell^n$,
		le lemme du serpent donne  que la fl\`eche verticale de gauche est un isomorphisme. On n'a pas besoin de savoir que cette fl\`eche est induite par l'application cycle. 
			Par la proposition \ref{enoncereseaux}, il existe un entier $N\geq 1$ tel que l'homomorphisme de bord \[H^1(k,\on{Pic}(X^s)\otimes k_s^*)\set{\ell}\to H^2(k,\on{Pic}(X^s)\otimes \mu_{\ell^n})\] est injectif pour tout entier $n\geq N$. La commutativit\'e de (\ref{uniq-divisible}) entra\^ine alors que $\delta_n$ est injectif sur la torsion $\ell$-primaire pour tout entier $n\geq N$. Donc
	\begin{samepage}		
   \begin{equation}\label{noyau1}
				\on{Ker}(\Theta_n)\set{\ell}=\on{Ker}(\delta_n\circ f\circ\beta)\{\ell\}=\beta^{-1}(\on{Ker}(\delta_n\circ f)\set{\ell})=\beta^{-1}(\on{Ker}(f)\set{\ell})
			\end{equation}
        pour tout $n\geq N$.
	\end{samepage}

			On d\'eduit de la proposition \ref{MLH2}(a) que \[\on{Ker}(\Theta)\set{\ell}=\bigcap_{n\geq N} \on{Ker}(\Theta_n)\set{\ell},\] donc
			\begin{equation}\label{noyau2}
				\on{Ker}(\Theta)\set{\ell}=\beta^{-1}(\on{Ker}(f)\set{\ell})\simeq \on{Ker}(f)\set{\ell}.
			\end{equation} 
			Sous l'hypoth\`ese (\ref{h4}), on a
			\begin{align}\label{noyau3}
				\on{Ker}(f)\set{\ell}&\simeq \on{Ker}[H^1(k,Z)\to H^1(k,H^1(X^s,\mc{K}_2))]\set{\ell} \\
				&\simeq \on{Ker}[CH^2(X)_0\xrightarrow{\Phi} H^1(k,S)]\set{\ell}.\nonumber
			\end{align}
			Ici le premier isomorphisme provient de la proposition \ref{ct-raskind-lichtenbaum} et le deuxi\`eme du lemme \ref{phi-def}. En combinant (\ref{noyau2}) et (\ref{noyau3}), on conclut que $\on{Ker}(f)\set{\ell}\simeq \on{Ker}(\Phi)\set{\ell}$, comme voulu.
		\end{proof}
		
		\begin{rmk}
			Pour montrer que $\on{Ker}(\Theta)\set{\ell}= \on{Ker}(\Phi)\set{\ell}$ en tant que sous-groupes de $CH^2(X)_0$, il suffirait de montrer la commutativit\'e du carr\'e suivant:
			\[
			\begin{tikzcd}
				CH^2(X)_0 \arrow[r,"\simeq"]\arrow[d,"\wr"]  & H^1(k,Z) \arrow[d,"\wr"] \\
				F^1\H^4(k(X)/X,\Gamma(2)) \arrow[r,"\simeq"] & H^1(k,\H^3(k_s(X)/X^s,\Gamma(2))).
			\end{tikzcd}		
			\]
			Ici l'application du haut vient de \cite[Proposition 3.6]{CTR85}, celle du bas du carr\'e commutatif (\ref{top-right}), celle de gauche du carr\'e commutatif (\ref{top-left}) et celle de droite de la proposition \ref{ct-raskind-lichtenbaum}.
		\end{rmk}

		\section*{Remerciements}
		Le deuxi\`eme auteur remercie Fumiaki Suzuki et Burt Totaro pour plusieurs conversations utiles sur le sujet de cet article.

	\end{document}